\numberwithin{equation}{section}
\newtheorem{theorem}{Theorem}[section] 
\newtheorem{proposition}{Proposition}[section] 
\newtheorem{lemma}{Lemma}[section] 
\newtheorem{remark}{Remark}[section] 
\newcommand{\R}{\mathbb R} 
\newcommand{\C}{\mathbb C} 
\renewcommand{\P}{\mathbb P}
\newcommand{\E}{\mathbb E}
\newcommand{\V}{\mathbb V}
\newcommand{\Cov}{\mathbb Cov}
\newcommand{\tr}{\text{tr}}
\newcommand{\Tr}{\text{Tr}}
\renewcommand\Re{\operatorname{\mathfrak{Re}}}
\renewcommand\Im{\operatorname{\mathfrak{Im}}}
\begin{document}
\title[Fluctuations of Matrix Entries]{Fluctuations of Matrix Entries of Regular Functions of Wigner Matrices}
\author[A. Pizzo]{Alessandro Pizzo}
\address{Department of Mathematics, University of California, Davis, One Shields Avenue, Davis, CA 95616-8633  }
\thanks{A.P. has been supported in part by the NSF grant DMS-0905988}
\email{pizzo@math.ucdavis.edu}

\author[D. Renfrew]{David Renfrew} \thanks{D.R. has been supported in part by the NSF grants VIGRE DMS-0636297, DMS-1007558, and DMS-0905988 }
\address{Department of Mathematics, University of California, Davis, One Shields Avenue, Davis, CA 95616-8633  }
\email{drenfrew@math.ucdavis.edu}

\author[A. Soshnikov]{Alexander Soshnikov}
\address{Department of Mathematics, University of California, Davis, One Shields Avenue, Davis, CA 95616-8633  }
\thanks{A.S. has been supported in part by the NSF grant DMS-1007558}
\email{soshniko@math.ucdavis.edu}

\begin{abstract}
We study the fluctuations of the matrix entries of regular functions of Wigner random matrices in the limit when the 
matrix size goes to infinity.  In the case of the 
Gaussian ensembles (GOE and GUE) this problem was considered by A.Lytova and L.Pastur in \cite{LP}.  Our results are valid provided the 
off-diagonal matrix entries have finite fourth moment, the diagonal matrix entries have finite second moment, and the 
test functions have four continuous derivatives in a neighborhood of the support of the Wigner semicircle law.
Moreover, if the marginal distributions satisfy the Poincar\'e inequality our results are valid for Lipschitz continuous test functions.
\end{abstract}



\maketitle

\section{ \bf{Introduction}}
\label{sec:intro}

Let $X_N= \frac{1}{\sqrt{N}} W_N $ be a random Wigner real symmetric (Hermitian) matrix.
In the real symmetric case, we assume that the off-diagonal entries 
\begin{equation}
\label{offdiagreal1}
(W_N)_{jk},\ 1\leq j<k \leq N,
\end{equation}
are i.i.d. random variables with probability distribution $\mu,$ such that
\begin{equation}
\label{offdiagreal2}
\E (W_N)_{jk}=0, \ \V(W_N)_{jk}=\sigma^2, \ \E (W_N)_{jk}^4=m_4<\infty, \ 1\leq j<k \leq N,
\end{equation}
where $\E \xi $ denotes the mathematical expectation and $\V \xi $ the variance of a random variable $\xi.$
The diagonal entries  
\begin{equation}
\label{diagreal1}
(W_N)_{ii}, \ 1\leq i \leq N,
\end{equation}
are i.i.d. random variables, independent from the off-diagonal entries, such that
\begin{equation}
\label{diagreal2}
\E (W_N)_{ii}=0, \ \V(W_N)_{ii}=\sigma_1^2, \ 1\leq i \leq N.
\end{equation}
We will denote the probability distribution of $\frac{1}{\sqrt{2}}\*(W_N)_{11}$ by $\mu_1. $

In a similar fashion, in the Hermitian case, we assume that the off-diagonal entries
\begin{equation}
\label{offdiagherm1}
\sqrt{2} \Re (W_N)_{jk}, \sqrt{2} \Im (W_N)_{jk}, \ 1\leq j<k \leq N,
\end{equation}
are i.i.d. centered random variables with probability distribution $\mu$ with variance $\sigma^2$ and finite fourth moment $m_4. $ 
The diagonal entries  
\begin{equation}
\label{diagherm1}
(W_N)_{ii}, \ 1\leq i \leq N,
\end{equation}
are i.i.d. random variables, independent from the off-diagonal entries, with probability distribution $\mu_1$ and finite second moment.

While the independence of the matrix entries $(W_N)_{ij}, \ 1\leq i\leq j\leq N$ is crucial in our analysis, the requirement that the entries are 
identically distributed can be replaced by certain Lindeberg-Feller type conditions for the fourth moments of marginal distributions (\cite{ORS}).

Given a  real symmetric (Hermitian) matrix $B$ of order $N,$ we define its empirical distribution of the eigenvalues as 
$\mu_B = \frac{1}{N} \sum_{i=1}^{N} \delta_{\lambda_{i}},$ where  $\lambda_1 \leq \ldots \leq \lambda_N$ are the (ordered) eigenvalues of $B.$
One of the fundamental results of random matrix theory is the celebrated Wigner semicircle law (see e.g. \cite{BG}, \cite{AGZ}, \cite{B}).  It states 
that almost surely $\mu_{X_N}$ converges weakly to the nonrandom limiting distribution $\mu_{sc}$ whose density is 
given by 
\begin{equation}
\label{polukrug}
\frac{d \mu_{sc}}{dx}(x) = \frac{1}{2 \pi \sigma^2} \sqrt{ 4 \sigma^2 - x^2} \mathbf{1}_{[-2 \sigma , 2 \sigma]}(x),
\end{equation}
In other words, for any bounded continuous test function $\varphi: \R \to \R,$ the linear statistic 
\[ \frac{1}{N} \sum_{i=1}^N \varphi(\lambda_i) = \frac{1}{N} \*\Tr(\varphi(X_N))=:\tr_N(\varphi(X_N)) \]
converges to $\int \varphi(x) \* d \mu_{sc}(dx) $ almost surely; here and throughout the paper we 
use the notation $\tr_N = \frac{1}{N} \Tr$ to denote the normalized trace.

The Stieltjes transform of the semi-circle law is
\begin{equation}
\label{steltsem} 
g_\sigma(z) := \int \frac{d \mu_{sc}(x)}{z-x}= \frac{z-\sqrt{z^2-4\*\sigma^2}}{2\*\sigma^2}, \ z \in \C \backslash [-2\*\sigma, 2\*\sigma].
\end{equation}
It is the solution to 
\begin{equation}
\label{semicircle}
\sigma^2 g_\sigma^2(z) - z g_\sigma(z) + 1 = 0
\end{equation}
that decays to 0 as $|z| \to \infty$. 

In this paper, we are interested in studying the joint distribution of matrix entries of regular functions of a Wigner random matrix $X_N.$
In \cite{LP}, Lytova and Pastur studied the limit of the one dimensional distribution of $\sqrt{N} \* \left( f(X_N)_{ij} - \E (f(X_N)_{ij})\right) $
in the case of GOE(GUE) ensembles (so the marginal distribution $\mu$ of matrix entries is Gaussian) provided $f(x) $ is a 
bounded differentiable function with bounded derivative.  Namely, they prove that
\begin{equation}
\label{roscha}
\sqrt{N} \* \left( f(X_N)_{ij} - \E (f(X_N)_{ij})\right) \to N(0, \frac{1+\delta_{ij}}{\beta}\* \omega^2(f)),
\end{equation}
with $\beta=1(2) $ in the GOE (GUE) case,
\begin{equation}
\label{omegasq}
\omega^2(f):= \V(f(\eta))= \frac{1}{2}\* \int_{-2\sigma}^{2\sigma} \int_{-2\sigma}^{2\sigma} (f(x)-f(y))^2 
\*  \frac{1}{4 \pi^2 \sigma^4} \sqrt{ 4 \sigma^2 - x^2} \* 
\sqrt{ 4 \sigma^2 - y^2} \* dx \* dy
\end{equation}
where $\eta$ is distributed according to the Wigner semicircle law (\ref{polukrug}). In the case of the off-diagonal entries in the GUE case,
the r.h.s. in (\ref{roscha}) should be understood as a complex Gaussian distribution with independent identically distributed real and imaginary 
parts, 
each with the variance $\frac{1}{2} \*\omega^2(f). \ $ The proof in \cite{LP} relies on the orthogonal (unitary)
invariance of the GOE (GUE) ensembles.

We extend the results of \cite{LP} in the following way.
We study the joint distribution of any finite number of the the matrix entries $f(X_N)_{ij}$ 
The limiting distribution
on the r.h.s. of (\ref{roscha}) is, in general, no longer Gaussian.  Instead, it is
the distribution of a linear combination of $(W_N)_{ij}$ and a Gaussian random variable, independent from $(W_N)_{ij}$
(see Theorems \ref{thm:real} and \ref{thm:herm} below). We refer the reader to Remark \ref{ZAM} after Theorem \ref{thm:real} for the discussion on 
when one of the two components in the linear combination vanishes.  In particular, the limiting distribution of 
$\sqrt{N} \* \left( f(X_N)_{ij} - \E (f(X_N)_{ij})\right)$ is Gaussian if and only if either the marginal distribution is Gaussian or 
$\int_{-2\sigma}^{2\sigma} x\*f(x)\*\sqrt{4\sigma^2-x^2}\* dx =0.$

Our approach requires that $f$ has four continuous derivatives in a neighborhood of the support of the Wigner semicircle law, 
$[-2\*\sigma, 2\*\sigma]. $ 
If the marginal distributions $\mu $ and $\mu_1$ satisfy a Poincar\'e inequality (\ref{poin}) then our results hold provided $f$ is 
Lipschitz continuous in a neighborhood of 
$[-2\*\sigma, 2\*\sigma]. $ 

The problem about the fluctuation of the entries of $f(X_N)$ is interesting in its own right.  However, for us the main motivation to study the 
problem came from the question about the limiting distribution of the outliers of finite rank perturbations of standard Wigner matrices 
(see e.g. \cite{CDF}, \cite{CDF1}, \cite{PRS}).  Let $M_N=X_N+C_N, $ where $X_N$ is a random real symmetric (Hermitian) Wigner matrix defined above 
and $C_N$ is a deterministic
real symmetric (Hermitian) matrix of finite rank $k$ with fixed non-zero eigenvalues $\lambda_1(C_N), \ldots, \lambda_k(C_N)$ and the corresponding 
orthonormal eigenvectors $v_1, \ldots, v_k.$   By the interlacing property, $M_N$ has at most $k$ eigenvalues (called outliers) 
that stay outside the support of the Wigner semicircle law in the limit of large $N.$  
Capitaine, Donati-Martin, and F\'eral proved in \cite{CDF1} that the limiting distribution of the 
outliers depends on the localization/delocalization properties of the eigenvectors $v_1, \ldots, v_k.$  In particular, if the eigenvectors are localized
(so only a finite number of coordinates are non-zero as $N \to \infty$), then the limiting distribution of the outliers is non-universal and depends on the 
marginal distribution of the matrix entries of $W_N.$  

The results in \cite{CDF1} are proved under the assumption that the marginal distribution of the 
i.i.d. entries of $W_N$ is symmetric and satisfies the Poincar\'e inequality (\ref{poin}).  In \cite{PRS}, we have extended the results of \cite{CDF1} to
the case of a finite fifth moment. Our approach relies on an ideas from \cite{BGM}.  In particular, an important step 
of the proof is the study of the limiting distribution of the eigenvalues of the $k\times k$ matrix 
$( \langle v_i, R_N(z) v_j \rangle)_{1\leq i,j\leq k},$ where $R_N(z)$ is the resolvent of $X_N$ and  
$\langle \cdot, \cdot \rangle$ is the standard inner product in $\C^N$ (see Proposition 1 in \cite{PRS}).
Thus, in the localized case one is interested in the joint distribution of a finite number of
resolvent entries of a standard Wigner matrix.

The rest of this section is devoted to the explanation of the main idea of the proof.  Complete
formulations of the results are given in Section \ref{sec:formulation}.
We restrict our attention to the real symmetric case since the arguments in the Hermitian case are very similar.
 
We start by considering the test functions of the form  $f(x)=\frac{1}{z-x}$ which corresponds to studying the resolvent entries. Define 
\begin{equation*}
R_N(z):=(z\*I_N-X_N)^{-1}, 
\end{equation*}
the resolvent of a real symmetric Wigner matrix $X_N=\frac{1}{\sqrt{N}}W_N$ for $z$ outside the spectrum of $X_N.$ 
For simplicity, we will consider here a diagonal 
entry of $R_N(z),$ say the $(N,N)-$th entry $(R_N(z))_{NN}.$  The off-diagonal entries can be treated in a similar way 
(see Section \ref{sec:flucmat}).  When it does not 
lead to ambiguity, we will use the shorthand notations $R_{ij}$ for $(R_N(z))_{ij}$ and $W_{ij}$ for $(W_N)_{ij}.$  
Further, assume that $z$ is fixed and $\Im z \not=0.$  By Cramer's rule, the $(N,N)-$th  entry of the resolvent can be written as
\begin{equation}
\label{ireland}
R_{NN}=(z-\frac{1}{\sqrt{N}}\*W_{NN} - \frac{1}{N}\*b^t \*G \*b)^{-1},
\end{equation}
where $b$ is the $(N-1)-$ dimensional column vector with the coordinates $W_{i1}, \ 1\leq i\leq N-1, \ b^t$ its transpose, and 
$G=G(z)$  is the resolvent of the $(N-1)\times (N-1)$ upper-left submatrix 
$\tilde{X}=(\frac{1}{\sqrt{N}}\*W_{ij})_{1\leq i,j\leq N-1}$ of the Wigner matrix $X_N,$ i.e.
\begin{equation*}
G:=(z\*I_{N-1} -\tilde{X})^{-1}.
\end{equation*}
We note that $\tilde{X}$ can be viewed as a standard $(N-1)\times(N-1)$ real symmetric Wigner matrix since the normalization by $\sqrt{N}$ instead of 
$\sqrt{N-1}$ does not make any difference in the limit of large $N.$
It is very important in our analysis that the random variables $W_{NN}$ and $b^t \*G\*b$ in (\ref{ireland}) are independent.  Moreover, in the 
quadratic form 
\begin{equation}
\label{macedonia}
b^t \*G\*b=\sum_{1\leq i,j\leq N-1} G_{ij} \*W_{i1}\*W_{j1},
\end{equation}
the vector $b$ is independent from the matrix $\tilde{R}.$  By subtracting and adding  $\E (\frac{1}{N}\*b^t \*G\*b)= 
\sigma^2 \*\E (\frac{1}{N}\*\Tr G)$ in the denominator,
 we rewrite (\ref{ireland}) as
\begin{align}
\label{ireland1}
& R_{NN}=\left(z- \E (\frac{1}{N}\*b^t \*G\*b)- 
\frac{1}{\sqrt{N}}\*W_{NN} - (\frac{1}{N}\*b^t \*G\*b - \E (\frac{1}{N}\*b^t \*G\*b))\right)^{-1} \\
\label{ireland2}
& = \left(z- \sigma^2 \*\E (\frac{1}{N}\*\Tr G)- 
\frac{1}{\sqrt{N}}\*W_{NN} - (\frac{1}{N}\*b^t \*G\*b - \frac{\sigma^2}{N} \*\E (\Tr G))\right)^{-1} \\
\label{ireland3}
& =\left(z- \sigma^2\*\E (\frac{1}{N}\*\Tr R_N(z)) +O(\frac{1}{N}) - 
\frac{1}{\sqrt{N}}\*W_{NN} - (\frac{1}{N}\*b^t \*G\*b - \frac{\sigma^2}{N} \*\E (\Tr G))\right)^{-1},
\end{align}
where in (\ref{ireland3}) we used the interlacing property satisfied by the eigenvalues of $X_N$ and its submatrix $\tilde{X}$
to write
\begin{equation*}
\E (\frac{1}{N}\*\Tr G)= \E (\frac{1}{N}\*\Tr R_N(z))+O(\frac{1}{N})
\end{equation*}
since $\Im z \not=0$ is fixed.

It follows from the semicircle law that
\begin{equation*}
\lim_{N\to \infty}  \E \frac{1}{N} \*\Tr R_N(z) =g_{\sigma}(z),
\end{equation*}
where the Stieltjes transform $g_{\sigma}(z)$ of the semicircle law has been defined in (\ref{steltsem}).
It follows from the calculations in Section \ref{sec:expvar} (see (\ref{odinnadtsat100})  in Proposition \ref{proposition:prop1}) that for fixed $z$
\begin{equation*}
\E \frac{1}{N}\* \Tr R_N(z) =g_{\sigma}(z) +O(\frac{1}{N}).
\end{equation*}
Therefore,
\begin{equation}
\label{ireland4}
 R_{NN}= \left(z- \sigma^2 \* g_{\sigma}(z) +O(\frac{1}{N}) - 
\frac{1}{\sqrt{N}} \left(\*W_{NN} + \frac{1}{\sqrt{N}} \*(b^t \*G\*b - \sigma^2\*\E (\Tr G))\right)\right)^{-1}.
\end{equation}
As we have already remarked, the random variables $W_{NN}$ and $\frac{1}{\sqrt{N}} \*\left(b^t \*G\*b - 
\sigma^2 \*\E (\Tr G)\right)$ are 
independent.  The crucial step in the analysis of the fluctuation of the resolvent entries is to prove that 
\begin{equation}
\label{sluchvel}
\frac{1}{\sqrt{N}} \*\left(b^t \*G\*b - \sigma^2 \*\E (\Tr G)\right)
\end{equation}
converges in distribution as $N \to \infty$ to a 
centralized complex Gaussian random variable. We discuss why 
such a convergence in distribution takes place a few paragraphs below but first we note that the Central Limit Theorem for
(\ref{sluchvel}) together with the formula (\ref{ireland4}) for $R_{NN}$ and
\begin{equation*}
z-\sigma^2\*g_{\sigma}(z)=\frac{1}{g_{\sigma}(z)}
\end{equation*}
from (\ref{semicircle})
immediately implies that the normalized resolvent entry
$\sqrt{N} (R_{NN}- g_{\sigma}(z))$ converges in distribution as $N \to \infty$ to the law
of a linear combination of $W_{NN}$ and a complex centralized Gaussian random variable.  
The coefficients of the linear combination and the covariance matrix of the complex 
Gaussian random variable are easily computable (see Theorem \ref{thm:resreal} in the next section).

In order to get insight into the limiting distribution of (\ref{sluchvel}), it is useful to consider first the case of a quadratic form with deterministic
coefficients. Let $b$ be a random $n-$dimensional vector with centralized i.i.d. real components with unit variance and finite fourth moment, 
and $A_n$ be an $n\times n$ deterministic real 
symmetric matrix such that
\begin{align}
\label{nonrandquad1}
& \text{the operator norm} \ \|A_n\| \leq a \ \text{for all} \ n\geq 1, \\
\label{nonrandquad2}
& \frac{1}{n} \* \Tr (A_n^2) \to a_2 \ \text{as} \ n\to \infty, \\
\label{nonrandquad3}
& \frac{1}{n} \* \sum_{i=1}^n (A_n)_{ii}^2 \to a_1^2  \ \text{as} \ n\to \infty,
\end{align}
where $a>0$ is some constant that does not depend on $n.$ 
The CLT for $\frac{1}{\sqrt{n}} \*\left(b^t \*A_n\*b - \sigma^2 \*\E (\Tr A_n)\right)$ was first established by Sevast'yanov (\cite{Sev}) in the case when 
the coordinates of $b$ are i.i.d. standard Gaussian random variables.  For subsequent developments, we refer the reader to
\cite{W}, \cite{Ber}, and \cite {BY}.  In particular, since
$$b^t \*A_n\*b - \sigma^2 \*\E (\Tr A_n)=\sum_{j=1}^n \left( a_{ii} \*(b_i^2-\sigma^2) +\sum_{i<j} a_{ij} \*b_i\*b_j\right)=\sum_{j=1}^n Z_n$$
we can write 
$b^t \*A_n\*b - \sigma^2 \*\E (\Tr A_n)$ as a sum of martingale differences with respect to the filtration 
$\mathcal{F}_j=\sigma(b_1, \ldots, b_j), \
j=1, \ldots, n.$  It is not difficult to prove that the conditions (\ref{nonrandquad1}-\ref{nonrandquad3}) 
imply that the Central Limit Theorem for martingale differences
(see e.g. \cite{D}) is applicable and 
the normalized random variable
$ \frac{1}{\sqrt{n}} \* (b^t \*A_n\*b - \Tr A_n)$ converges in distribution to $N(0, \kappa_4\*a_1^2 +a_2)$ as $n\to \infty,$ where $\kappa_4$ is 
the fourth cumulant of the marginal one-dimensional distribution of $b.$ 
In (\ref{sluchvel}), the quadratic form is associated with a complex symmetric random matrix $G.$  Thus, first of all, one has to study the 
joint distribution of 
\begin{equation*}
\frac{1}{\sqrt{N}} \*\left(b^t \* \Re G\*b - \sigma^2 \*\E (\Tr \Re G)\right) \ \text{and} \ 
\frac{1}{\sqrt{N}} \*\left(b^t \* \Im G\*b - \sigma^2 \*\E (\Tr \Im G)\right),
\end{equation*}
where
\begin{equation*}
\Re G=(\Re G_{ij}, \ 1\leq i,j\leq N-1), \ \ \Im G=(\Im G_{ij}, \ 1\leq i,j\leq N-1).
\end{equation*}
This corresponds to the choice of 
\begin{equation}
\label{annnnn}
A_n= x \* \Re G + y \* \Im G, \ n=N-1, 
\end{equation}
where $x$ and $y$ are arbitrary real numbers.  The second difference is that $A_n$ in (\ref{annnnn}) is random.  However, 
the CLT 
still holds if $\|A_n\|$ is bounded from above by a non-random constant $a$ with probability $1$ 
and
$ \frac{1}{n} \Tr (A_n^2)$ and $\frac{1}{n} \* \sum_{i=1}^n (A_n)_{ii}^2$ converge in probability to non-random limits
(see e.g. \cite{BGM} and the Appendix by J.Baik and J.Silverstein in \cite{CDF}). 
We note that $\|G\|\leq \frac{1}{|\Im z|}.$ 
The desired convergence in probability will easily follow from the 
self-averaging property of the resolvent entries established in Section \ref{sec:expvar} (see Proposition \ref{proposition:prop1}).
The generalization of the CLT to random $A_n$ is not unexpected since the distribution of $ \frac{1}{\sqrt{n}} \* (b^t \*A_n\*b - \Tr A_n)$ conditioned
on the matrix entries of $A_n$ is approximately $N(0, \kappa_4\*\frac{1}{n} \* \sum_{i=1}^n (A_n)_{ii}^2 + \frac{1}{n} \* \Tr (A_n^2))$ for large $n,$
and the expression in the variance converges to a non-random limit as $n\to \infty.$

As a result, one obtains that the term $\frac{1}{\sqrt{N}} \*\left(b^t \*G\*b - \sigma^2 \*\E (\Tr G)\right)$ in (\ref{ireland4}) 
converges in 
distribution as $N\to \infty$ to a complex Gaussian random variable with zero mathematical expectation and the covariance matrix given by the r.h.s. of
(\ref{dispersii1}-\ref{dispersii3}) with $w=z$ and $ \varphi_{++}(z,w), \ \varphi_{--}(z,w),$ and $\varphi_{+-}(z,w)$ defined in 
(\ref{padova1}-\ref{padova4}) in Section \ref{sec:formulation}. Since it is independent from $W_{NN},$  the limiting distribution of 
$\sqrt{N}\*(R_{NN}-g_{\sigma}(z))$ is given $g_{\sigma}(z)^2$ multiplied by the convolution of the marginal diagonal distribution $\mu_1$ and the 
complex Gaussian.

To study the joint distribution of several resolvent entries $R_{i_l j_l}(z_l), \ 1 \leq l \leq k, $ one follows a similar route. The main new ingredient 
is a multi-dimensional CLT for random bilinear (sesquilinear in the complex case)  forms (see Theorem 6.4. in \cite{BGM} 
and Theorem 7.3 in \cite{BY}; for the convenience of the 
reader  we reproduce the last one as Theorem \ref{thm:bggm} in Section \ref{section:appendix}).  
Thus, one is able to prove the result of Theorem \ref{thm:real} in Section \ref{sec:formulation} 
for the test functions of the form
\begin{equation}
\label{ffff}
f(x)=\sum_{i=1}^k c_k \frac{1}{z_k-x}, \ c_k\in \R, \ z_k \in \C, \ \Im z_k\not=0, \ 1\leq i\leq k.
\end{equation}

The finite fourth moment condition on the off-diagonal entries of $W_N$ and the finite second moment condition for the diagonal entries of $W_N$ imply 
that $\|X_N\|\to 2\*\sigma$ a.s. as $N\to \infty$ (\cite{BYin}).  Therefore, the limiting fluctuations of normalized matrix entries of $f(X_N)$ do not 
change if we alter $f$ outside $[-2\*\sigma-\delta, 2\*\sigma +\delta],$ where $\delta>0$ is an arbitrary fixed positive number.  In particular, one 
can replace $f$ by $f\*h$ where $h\in C^{\infty}(\R)$ is a function with compact support such that $h=1$ on $[-M, M]$ for sufficiently 
large $M.$  

In order to extend the result of Theorem \ref{thm:real} from (\ref{ffff}) to more
general test functions $f$, one approximates $f$ by test functions $f_m$ of the form
(\ref{ffff}) so that $\|f-f_m\|\to 0$ as $m\to \infty $  in an appropriate norm and 
$N\*\V(f(X_N)_{ij}-f_m(X_N)_{ij})$ goes to zero uniformly in $N$ when $m\to \infty,$  where $\V$ denotes the variance.

This program is the easiest to implement when the marginal 
distributions of the entries of $W_N$ satisfy the Poincar\'e inequality.  Indeed, for a Lipschitz test function $f,$ a matrix entry 
$f(X)_{ij}$ is a Lipschitz function of the matrix entries of $X$ (see e.g. \cite{CQT}).  Therefore, as a direct consequence of the Poincar\'e 
inequality for the marginal 
distributions of
$W_N$ one gets the bound
\begin{equation}
\label{liplip}
\V (f(X_N)_{ij})\leq \frac{|f|^2_{\mathcal{L}}}{\upsilon\*N},
\end{equation}
where $|f|_{\mathcal{L}}:=sup_{x\not=y} \frac{|f(x)-f(y)|}{|x-y|}$ is the Lipschitz constant.  We note that $N\*\V (f(X_N)_{ij})$ goes to zero if
$|f|_{\mathcal{L}}$ goes to zero.  Approximating a Lipschitz continuous compactly supported test function  $f$ by functions $f_m\* h$, where 
$f_m, \ m\geq 1, $ are of the form (\ref{ffff})
one finishes the proof.

If the marginal distributions do not satisfy the Poincar\'e inequality, one needs to impose some additional smoothness condition on $f$ to obtain an 
analogue of (\ref{liplip}).  We refer the reader to
the bound (\ref{chto3}) in Proposition \ref{proposition:prop2} in Section \ref{sec:formulation}. The proof of
(\ref{chto3}) consists of two steps.  First, one estimates
the variance of the resolvent entries and proves

\begin{equation*}
\V (R_{ij}(z)) = O \left( \frac{P_6(|\Im z|^{-1})}{N}\right), \  1\leq i, j\leq N, \ \text{uniformly on} \ \C\setminus \R,
\end{equation*}
where $P_6$ is some polynomial of degree $6$ with fixed positive coefficients (see (\ref{odinnadtsat102}) in Proposition \ref{proposition:prop1} in
Section \ref{sec:expvar}).  The proof of this bound is a bit long but quite standard and relies on the resolvent technique 
(see e.g. \cite{B}, \cite{CDF}, \cite{KKP}, \cite{LP1}, and \cite{Sh}).  In particular, many computations are similar to those used in 
the derivation of the master loop equation in the proof of the Wigner semicircle law by the resolvent method.

To extend the last estimate to more general test functions we use 
the Helffer-Sj\"{o}strand functional calculus discussed in Section \ref{sec:expvarreg} 
(see e.g. \cite{HS}, \cite{D}, \cite{HT}, or the proof of Lemma 5.5.5 in \cite{AGZ}). 
One then requires that $f$ have
four derivatives to compensate for the $|\Im z|^{-6}$ factor in the upper bound on $\V R_{ij}(z).$

The rest of the paper is organized as follows.  Section  \ref{sec:formulation} is devoted to formulation of our main results.  Perhaps, during the first 
glance at the paper the reader could just look at Theorem \ref{thm:real} in the real symmetric case (the analogue in the Hermitian case is Theorem
\ref{thm:herm}) and omit the rest of the section. 
Theorems \ref{thm:resreal} and \ref{thm:resherm} deal with the resolvent entries and are important building blocks in the proofs of
Theorems \ref{thm:real} and \ref{thm:herm}.  Theorems \ref{thm:resreal1}, \ref{thm:real1}, \ref{thm:resherm1}, and \ref{thm:resreal1} prove 
slightly stronger results since as they assume that the marginal distributions satisfy the Poincar\'e inequality.

The actual proof starts in Section \ref{sec:expvar} which is devoted to estimates on the mathematical expectation and the 
variance of the resolvent entries.  
The main 
results of this section are collected in Proposition \ref{proposition:prop1}. During the first reading 
of the paper, the reader might wish to skip long but rather straightforward computations in Section \ref{sec:expvar} and jump to the next 
section once the statement of Proposition \ref{proposition:prop1} is absorbed.

In Section \ref{sec:expvarreg}, we extend our 
estimates to the matrix entries $f(X_N)_{ij}$ for  sufficiently nice test functions $f$ by applying the Helffer-Sj\"{o}strand functional calculus. 
The main result of Section \ref{sec:expvarreg} is the proof of Proposition \ref{proposition:prop2}.

Section \ref{sec:flucmat} is devoted to studying the fluctuation of the resolvent entries and contains the 
proofs of Theorems \ref{thm:resreal}, \ref{thm:resreal1}, \ref{thm:resherm}, and \ref{thm:resherm1}.  The proofs of  Theorems \ref{thm:resreal} and
\ref{thm:resherm1} follow the route explained above when we discussed the fluctuation of a diagonal resolvent entry.  
The proofs of Theorems \ref{thm:resreal1} and 
\ref{thm:resherm1} could be omitted at the first reading as they prove the functional convergence in
a special case when the marginal distributions satisfy the Poincar\'e 
inequality.

Theorems \ref{thm:real} and \ref{thm:real1} are proved in Section \ref{sec:lipschitz}. The proofs of the corresponding 
results in the Hermitian  case, namely
Theorems \ref{thm:herm} and \ref{thm:herm1} is very similar and mostly left to the reader.

In the Appendix, we discuss various tools used throughout the paper.

We will denote throughout the paper by $const_i, Const_i, $ various positive constants
that may change from line to line.  Occasionally, we will drop the dependence on $N$ in the notations for the matrix entries.

We would like to thank M. Shcherbina for useful discussions and S. O'Rourke for careful reading of the manuscript and useful remarks.

\section{ \bf{Formulation of Main Results}}
\label{sec:formulation}

First, we consider the resolvent entries. In Theorems \ref{thm:resreal} and \ref{thm:resherm} formulated below, 
we study the limiting joint distribution of a 
finite number 
of resolvent matrix entries of a real symmetric (Hermitian) Wigner matrix $X_N$.  We recall that the resolvent $R_N(z)$ of $X_N$ is defined as
$$ R_N(z):=(z\*I_N-X_N)^{-1}, $$
for $z$ outside the spectrum of $X_N.$  We will be interested in the limiting joint distribution of a finite number of the resolvent entries.
Since the entries of $X_N$ are i.i.d. random variables up from the diagonal, we can study, without a loss of generality,
the joint distribution of the resolvent entries in an $m\times m$ upper-left corner 
of the matrix provided that $m$ is an arbitrary fixed positive integer.  

Let us denote by $R^{(m)}(z) $ the $m\times m$ upper-left corner of the matrix $R_N(z).$
In a similar fashion, we denote by $W^{(m)}, \ X^{(m)}, \ $ the $m\times m$ upper-left corner of matrices $W_N $ and $X_N,$ respectively.
If the reader is put off by some cumbersome formulas/notations in this section, he/she can always assume $m=1$ and deal with just one diagonal resolvent 
entry.  The case $m>1$ does not require any significant new ideas. 

Consider a matrix-valued random field 
\begin{equation}
\label{ups}
\Upsilon_N(z)=\sqrt{N} \* \left(R^{(m)}(z)-g_{\sigma}(z)\*I_m \right), \ z \in \C \setminus [-2\*\sigma, 2\*\sigma].
\end{equation}
Clearly, $\Upsilon_N(z)$ is a random function on  $ \C \setminus [-2\*\sigma, 2\*\sigma] $ with values in the space of
complex symmetric $m\times m$ matrices ($\Upsilon_N(x)$ is real symmetric for real $ x$).
Let us define
\begin{equation}
\label{padova1}
\varphi(z,w):= \int_{-2\sigma}^{2\sigma} \frac{1}{z-x}\* \frac{1}{w-x}  \* \frac{1}{2 \pi \sigma^2} \sqrt{ 4 \sigma^2 - x^2}  \* dx=
\left \{\begin{array} {r@{\quad:\quad}l} 
-\frac{g_{\sigma}(w)-g_{\sigma}(z)}{w-z} & \text{if} \ \ w\not=z, \\
-g_{\sigma}'(z)& \text{if} \ \ w=z. \end{array} \right.
\end{equation}
for $z,w \in \C \setminus [-2\*\sigma, 2\*\sigma].\ $  Clearly, $\varphi(z,w)=\E\left( \frac{1}{z-\eta}\*\frac{1}{w-\eta} \right), \ $
where $\eta $ is distributed according to the Wigner semicircle law (\ref{polukrug}).
We also define
\begin{align}
\label{padova2}
& \varphi_{++}(z,w):= \int_{-2\sigma}^{2\sigma} \Re \frac{1}{z-x}\* \Re\frac{1}{w-x}  
\* \frac{1}{2 \pi \sigma^2} \sqrt{ 4 \sigma^2 - x^2} \* dx\\
& =\frac{1}{4}\* \left(\varphi(z,w)+\varphi(\bar{z},\bar{w})+\varphi(\bar{z},w)+\varphi(z,\bar{w})\right), \nonumber\\
\label{padova3}
& \varphi_{--}(z,w):= \int_{-2\sigma}^{2\sigma} \Im \frac{1}{z-x}\* \Im\frac{1}{w-x}  
\* \frac{1}{2 \pi \sigma^2} \sqrt{ 4 \sigma^2 - x^2}  \* dx\\
& = -\frac{1}{4}\* \left(\varphi(z,w)+\varphi(\bar{z},\bar{w})-\varphi(\bar{z},w)-\varphi(z,\bar{w})\right), \nonumber\\
\label{padova4}
& \varphi_{+-}(z,w):= \int_{-2\sigma}^{2\sigma} \Re \frac{1}{z-x}\* \Im\frac{1}{w-x}  
\* \frac{1}{2 \pi \sigma^2} \sqrt{ 4 \sigma^2 - x^2}  \* dx\\
& =-\frac{i}{4}\* \left(\varphi(z,w)+\varphi(\bar{z},w)-\varphi(\bar{z},\bar{w})-\varphi(z,\bar{w})\right), \nonumber
\end{align}

\begin{theorem}
\label{thm:resreal}
Let $X_N=\frac{1}{\sqrt{N}} W_N$ be a random real symmetric Wigner matrix 
(\ref{offdiagreal1}-\ref{diagreal2}).
The random field 
$ \Upsilon_N(z) $ in (\ref{ups}) converges in finite-dimensional distributions to a random field 
\begin{equation}
\label{functionalconv}
\Upsilon (z) = g_{\sigma}^2(z)\*(W^{(m)} + Y(z)),
\end{equation}
where $W^{(m)}$ is the $m\times m$ upper-left corner submatrix of a Wigner matrix $W_N, $ and
$Y(z)=\left(Y_{ij}(z)\right), Y_{ij}(z)=Y_{ji}(z), \ 1\leq i,j \leq m, $  is a Gaussian random field such that 

\begin{align}
\label{dispersii1}
& \Cov(\Re Y_{jj}(z), \Re Y_{jj}(w))= \kappa_4(\mu)\* \Re g_{\sigma}(z) \* \Re g_{\sigma}(w) +2\*\sigma^4\* \varphi_{++}(z,w), \\
\label{dispersii2}
& \Cov(\Im Y_{jj}(z), \Im Y_{jj}(w))= \kappa_4(\mu)\* \Im g_{\sigma}(z) \*\Im g_{\sigma}(w) +
2\*\sigma^4\* \varphi_{--}(z,w), \\
\label{dispersii3}
& \Cov(\Re Y_{jj}(z), \Im Y_{jj}(w) )= \kappa_4(\mu)\* \Re g_{\sigma}(z) \* \Im g_{\sigma}(w)  
+ 2 \*\sigma^4\* \varphi_{+-}(z,w), \\
\label{dispersii4}
& \Cov(\Re Y_{ij}(z),  \Re Y_{ij}(w))= \sigma^4\* \varphi_{++}(z,w), \ i\not=j,\\
\label{dispersii5}
& \Cov(\Im Y_{ij}(z), \Im Y_{ij}(w))= \sigma^4\* \varphi_{--}(z,w), \ i\not=j,\\
\label{dispersii6}
& \Cov(\Re Y_{ij}(z), \Im Y_{ij}(w) )= \sigma^4\* \varphi_{+-}(z,w), \ i\not=j,
\end{align}
where the fourth cumulant $\kappa_4(\mu):= \int x^4 \*\mu(dx) - 3\* (\int x^2 \* \mu(dx))^2=m_4-3\sigma^4.$  

In addition, for any finite $r\geq 1, $ the 
entries $Y_{i_lj_l}(z_1), \ 1\leq i_l\leq j_l \leq m, \ 1\leq l\leq r,$ are independent provided
$(i_{l_1}, j_{l_1})\not=(i_{l_2}, j_{l_2})$ for $1\leq l_1\not=l_2\leq r.$
\end{theorem}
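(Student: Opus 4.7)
The plan is to implement the single-entry strategy outlined in Section~\ref{sec:intro} jointly across all indices and spectral parameters, reducing the theorem to a multidimensional CLT for random bilinear forms applied to resolvents of minors of $X_N$.

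\emph{Schur reduction.} For a diagonal index $j$, Schur's complement formula gives
\begin{equation*}
R_{jj}(z) = \Bigl(z - \tfrac{1}{\sqrt N}W_{jj} - \tfrac{1}{N}\, b_j^{T}\, G^{(j)}(z)\, b_j\Bigr)^{-1},
\end{equation*}
where $G^{(j)}(z)$ is the resolvent of the minor obtained by deleting row and column $j$, and $b_j$ is the corresponding off-diagonal column of $W_N$, independent of $G^{(j)}(z)$. Adding and subtracting $\sigma^{2} g_\sigma(z)$, using (\ref{semicircle}) together with the bound $\E\,\tr_N R_N(z) = g_\sigma(z) + O(1/N)$ from Proposition~\ref{proposition:prop1}, and Taylor-expanding would yield
\begin{equation*}
\sqrt N\bigl(R_{jj}(z) - g_\sigma(z)\bigr) = g_\sigma^{2}(z)\bigl(W_{jj} + Z_{jj}(z)\bigr) + o_{\P}(1),
\end{equation*}
with $Z_{jj}(z) := N^{-1/2}\bigl(b_j^{T} G^{(j)}(z) b_j - \sigma^{2}\,\E\Tr G^{(j)}(z)\bigr)$. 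For $i\ne j$, the $2\times 2$ Schur formula applied to the block containing rows and columns $i$ and $j$ produces analogously $\sqrt N\, R_{ij}(z) = g_\sigma^{2}(z)(W_{ij} + Z_{ij}(z)) + o_{\P}(1)$ with $Z_{ij}(z) := N^{-1/2}\, b_i^{T} G^{(i,j)}(z)\, b_j$, a bilinear form in two independent columns of $W_N$ against the doubly-reduced minor resolvent.

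\emph{Joint CLT and covariance computation.} For any finite collection $\{(i_\ell, j_\ell, z_\ell)\}$ I would apply the multidimensional bilinear-form CLT (Theorem~\ref{thm:bggm}) to $\{Z_{i_\ell j_\ell}(z_\ell)\}$. Its hypotheses are verified from $\|G^{(j)}(z)\| \le |\Im z|^{-1}$ almost surely together with the convergence in probability
\begin{equation*}
\tfrac{1}{N}\Tr\bigl(G^{(i)}(z)\, G^{(j)}(w)\bigr) \longrightarrow \varphi(z,w),\qquad \tfrac{1}{N}\sum_{k}\bigl(G^{(i)}(z)\bigr)_{kk}\bigl(G^{(j)}(w)\bigr)_{kk} \longrightarrow g_\sigma(z)\, g_\sigma(w).
\end{equation*}
The first follows from the resolvent identity $R_N(z)R_N(w) = (R_N(w) - R_N(z))/(z-w)$ and the semicircle-law convergence of $\tr_N R_N$; the second from pointwise convergence of diagonal resolvent entries combined with self-averaging. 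Replacing $G^{(j)}$ by $R_N$ in these traces costs only $O(1/N)$ since they differ by a rank-one perturbation, and both the replacement and the quantitative self-averaging are supplied by Proposition~\ref{proposition:prop1}. The conditional quadratic-form variance $\kappa_4\,\tfrac{1}{N}\sum_k A_{kk}^{2} + 2\sigma^{4}\,\tfrac{1}{N}\Tr A^{2}$, applied to $A = x\Re G + y\Im G$ and decomposed via (\ref{padova2}--\ref{padova4}), reproduces the covariances (\ref{dispersii1}--\ref{dispersii3}). The off-diagonal bilinear form $b_i^{T} A b_j$ in two independent vectors has variance $\sigma^{4}\,\tfrac{1}{N}\Tr(A^{T}A)$ with no $\kappa_4$ contribution, which after the same real/imaginary decomposition reproduces (\ref{dispersii4}--\ref{dispersii6}).

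\emph{Independence and main obstacle.} Independence of $Y(z)$ from $W^{(m)}$ in the limit follows by comparing the minor resolvents appearing in the $Z_{ij}$ with those of the matrix $\tilde X_N$ obtained by zeroing out $W^{(m)}$: the two differ by a rank-$O(m)$ perturbation of operator norm $O(1/\sqrt N)$, whose contribution to any centered quadratic or bilinear form is $o_{\P}(1)$ after the $N^{-1/2}$ normalization. Independence of $Y_{ij}(z)$ across distinct pairs $(i,j)$ is obtained because for index pairs sharing no common row/column the relevant $b$-vectors are genuinely independent, while for pairs sharing one index a direct covariance computation combined with the asymptotic equivalence of the corresponding minors yields vanishing cross-covariance; uncorrelated jointly Gaussian random variables are independent. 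The principal difficulty throughout is the bookkeeping required to carry several Schur reductions simultaneously in a form compatible with Theorem~\ref{thm:bggm}: one must pass to a common minor (deleting all relevant rows and columns), carefully track the $O(1/N)$ rank-perturbation errors between the various $G^{(\cdot)}$ and this common reference, and verify that all remainders from Taylor expansions and resolvent identities are uniformly $o_{\P}(1)$ under the joint law, where Proposition~\ref{proposition:prop1} provides the needed quantitative self-averaging.
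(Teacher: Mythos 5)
Your proposal is correct in its essential mechanism and, after the bookkeeping you flag at the end, would converge to the paper's argument; but the paper organizes the Schur reduction in a way that removes the ``principal difficulty'' you identify, so the routes are genuinely different in how they set things up. You perform one Schur reduction per matrix entry, deleting only the row/column $j$ (diagonal case) or the pair $i,j$ (off-diagonal case), producing a different minor resolvent $G^{(j)}$ or $G^{(i,j)}$ and a different index set for each $Z$-variable; you then have to compare all these to a common reference minor and control rank-perturbation errors. The paper instead takes a single $m\times m$ block Schur complement: writing $R^{(m)}(z) = \bigl(zI_m - \tfrac{1}{\sqrt N}W^{(m)} - B^*\tilde R(z) B\bigr)^{-1}$, where $\tilde R$ is the resolvent of the fixed $(N-m)\times(N-m)$ lower-right minor and $B$ collects the $(N-m)\times m$ block of overlaps. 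All the bilinear forms $\langle x^{(i)}, \tilde R(z) x^{(j)}\rangle$ are then built against one common minor, using vectors $x^{(i)}$ that live in a fixed ambient space, are manifestly independent of $W^{(m)}$ and of $\tilde R$, and directly fit the shape required by Theorem~\ref{thm:bggm}. This avoids the cross-dependence you would otherwise have to fight (your $b_j$ still contains entries $W_{ij}$ with $i\le m$, $i\ne j$, which are exactly entries of $W^{(m)}$), and it makes the $O(1/\sqrt N)$ error in the matrix Taylor expansion come out in one step, with $\Gamma_N(z)=W^{(m)}+Y_N(z)$ tight by construction. Your reduction is salvageable but buys nothing: the error control you defer (``carefully track the $O(1/N)$ rank-perturbation errors'') is precisely what the paper bypasses.

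Two points deserve more care than your sketch gives them. First, the claimed $o_{\P}(1)$ contribution of a rank-$O(m)$, norm-$O(1/\sqrt N)$ perturbation $\Delta$ to the centered quadratic form is not automatic from the naive operator-norm bound: $|b^T\Delta b|\le\|\Delta\|\,\|b\|^2 = O(\sqrt N)$, which after the $N^{-1/2}$ normalization is $O(1)$, not $o(1)$. You need the finite-rank structure plus independence of $b$ from the vectors spanning the range of $\Delta$, so that each of the $O(m)$ terms $c_\alpha\,(u_\alpha^T b)(v_\alpha^T b)$ is $O_{\P}(1/\sqrt N)$; this independence is exactly what the single block Schur reduction hands you for free. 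Second, Theorem~\ref{thm:bggm} is stated for deterministic matrices $\mathcal M_N^{(s,t)}$, while your $G^{(\cdot)}$ and the paper's $\tilde R$ are random. The paper addresses this by noting tightness of the bilinear forms and passing to an a.s.\ convergent subsequence; you should include an analogous remark or the application of the CLT is not, strictly speaking, licensed.
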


\begin{remark}
\label{xreal}
If $z=x \in \R \setminus [-2\*\sigma, 2 \*\sigma], \ $ then $Y_{ij}(x), \ 1\leq i \leq j \leq m, \ $ are  independent centered real Gaussian random 
variables with the variance given by
\begin{align}
\label{Disp1} 
& \V(Y_{ii}(x))= \kappa_4(\mu)\* g_{\sigma}^2(x) - 2\*\sigma^4\* g_{\sigma}'(x), \ 1\leq i \leq m, \\
\label{Disp2}
& \V(Y_{ij}(x))= -\sigma^4\* g_{\sigma}'(x), \ 1\leq i <j \leq m.
\end{align}
\end{remark}

Let $ \mathcal{D} \subset \C \setminus [-2\*\sigma, 2\*\sigma] $ be a compact set. The distribution of $\Upsilon_N(z), z\in \mathcal{D}, $ 
defines a probability measure $ \mathcal{P_N} $ on $C(\mathcal{D},\C^{m(m+1)/2}).$ 
One can prove functional convergence in distribution 
for the random field $\Upsilon_N(z), \ z\in \mathcal{D}, $ provided $\mu $ and $\mu_1$ satisfy some additional conditions on the decay of their tail 
distributions.  For simplicity, we will consider the case when $\mu $ and $\mu_1$ satisfy the Poincar\'e inequality.

We recall that a probability measure  $\P$ on $\R^M$ satisfies the Poincar\'e inequality with constant $\upsilon>0$ if, for all continuously differentiable 
functions 
$f: \R^M \to \C,$
\begin{equation}
\label{poin}
\V_{\P}(f)= \E_{\P} \left(|f(x)-\E_{\P}(f(x))|^2\right) \leq \frac{1}{\upsilon}\* \E_{\P}[ |\nabla f(x)|^2 ]
\end{equation} 
Note that the Poincar\'e inequality tensorizes and the probability measures satisfying the Poincar\'e inequality
have sub-exponential tails (\cite{GZ}, \cite{AGZ}) . 
By a standard scaling argument, we note that if the marginal distributions $\mu $ and $\mu_1$ 
of the matrix entries of $W_N$  satisfy the Poincar\'e inequality
with constant
$\upsilon>0$ then the marginal distributions of the matrix entries of $X_N=\frac{1}{\sqrt{N}}\*W_N $ satisfy the Poincar\'e inequality with constant
$N\*\upsilon.\ $

\begin{theorem}
\label{thm:resreal1}
Let $X_N=\frac{1}{\sqrt{N}} W_N$ be a random real symmetric Wigner matrix 
(\ref{offdiagreal1}-\ref{diagreal2}) and the marginal distributions $\mu $ and $\mu_1$ satisfy the
Poincar\'e inequality (\ref{poin}).
The probability measure $ \mathcal{P_N} $ on $C(\mathcal{D},\C^{m(m+1)/2}) $ given by the random field 
$ \Upsilon_N(z) $ in (\ref{ups}) weakly converges to the distribution of the random field $\Upsilon (z)$ defined in Theorem \ref{thm:resreal}.
\end{theorem}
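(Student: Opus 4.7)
Finite-dimensional convergence of $\Upsilon_N$ to $\Upsilon$ is already provided by Theorem \ref{thm:resreal}, so the plan is to establish tightness of $\{\mathcal{P}_N\}_{N\ge 1}$ in $C(\mathcal{D},\C^{m(m+1)/2})$; once tightness is in hand, weak convergence to the law of $\Upsilon$ follows automatically because any subsequential limit must agree with the finite-dimensional distributions already identified. The Poincar\'e hypothesis enters the argument in two ways. First, via the Herbst argument (see \cite{GZ}, \cite{AGZ}) it gives an exponential concentration of $\|X_N\|$ around $2\sigma$, so that for any fixed $\delta>0$ the event $\mathcal{E}_N:=\{\|X_N\|\le 2\sigma+\delta\}$ has $\P(\mathcal{E}_N^c)=O(e^{-c\sqrt{N}})$; on $\mathcal{E}_N$ each entry of $\Upsilon_N(z)$ is a holomorphic function of $z$ on any compact subset of $\C\setminus[-2\sigma-\delta,2\sigma+\delta]$. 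Second, it supplies the Lipschitz variance bound (\ref{liplip}) applied entry-wise.

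I would next fix a compact $\tilde{\mathcal{D}}$ with $\mathcal{D}\subset \mathrm{int}(\tilde{\mathcal{D}})\subset \C\setminus[-2\sigma-\delta,2\sigma+\delta]$ for $\delta$ small enough, and establish the uniform $L^2$ bound
\begin{equation*}
\sup_N\ \sup_{z\in \tilde{\mathcal{D}}} \E |\Upsilon_N(z)|^2 \le C_{\tilde{\mathcal{D}}}.
\end{equation*}
The mean part is controlled by Proposition \ref{proposition:prop1}: $|\E R_{ij}(z)-g_\sigma(z)\delta_{ij}|=O(1/N)$ uniformly on $\tilde{\mathcal{D}}$, so $|\E\, \Upsilon_N(z)|=O(1/\sqrt{N})$. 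For the variance, let $\chi\in C^\infty(\R)$ be a smooth cutoff equal to $1$ on $[-M,M]$ with $M>2\sigma+\delta$, and set $g_z(x):=\chi(x)/(z-x)$; this is a globally Lipschitz function on $\R$ whose Lipschitz constant is bounded uniformly for $z\in \tilde{\mathcal{D}}$. On $\mathcal{E}_N$ one has $R_{ij}(z)=g_z(X_N)_{ij}$, so (\ref{liplip}) applied to the Lipschitz map $X_N\mapsto g_z(X_N)_{ij}$ yields $\V(g_z(X_N)_{ij})=O(1/N)$ uniformly on $\tilde{\mathcal{D}}$. The contribution of $\mathcal{E}_N^c$ to $\E|\Upsilon_N(z)|^2$ is negligible by Cauchy--Schwarz combined with the exponential tail of $\P(\mathcal{E}_N^c)$ and the deterministic polynomial-in-$N$ bound on $\|R_N(z)\|^2$ coming from $\|R_N(z)\|\le|\Im z|^{-1}$ for non-real $z$, or from the crude estimate $\|R_N(z)\|\le\|X_N\|^{-1}$ for real $z$ lifted to high moments via the Poincar\'e sub-exponential tails of matrix entries.

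To pass from pointwise to supremum control, I would exploit holomorphicity of $\Upsilon_N$ on $\tilde{\mathcal{D}}$ (on $\mathcal{E}_N$): Cauchy's integral formula on a fixed contour $\gamma\subset\tilde{\mathcal{D}}$ enclosing $\mathcal{D}$ gives
\begin{equation*}
\sup_{z\in \mathcal{D}}|\Upsilon_N(z)|^2 + \sup_{z\in\mathcal{D}}|\Upsilon_N'(z)|^2 \le C_{\mathcal{D},\tilde{\mathcal{D}}} \oint_\gamma |\Upsilon_N(u)|^2 |du|.
\end{equation*}
Taking expectations and using the previous step produces $\sup_N \E \sup_{z\in\mathcal{D}}|\Upsilon_N(z)|^2<\infty$ and $\sup_N \E \sup_{z\in\mathcal{D}}|\Upsilon_N'(z)|^2<\infty$. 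The first of these bounds gives uniform boundedness in probability on $\mathcal{D}$; combined with $|\Upsilon_N(z)-\Upsilon_N(w)|\le |z-w|\sup_{u\in\mathcal{D}}|\Upsilon_N'(u)|$, the second bound gives equicontinuity in probability. These are precisely the hypotheses of the Arzel\`a--Ascoli tightness criterion for $C(\mathcal{D},\C^{m(m+1)/2})$, so $\{\mathcal{P}_N\}$ is tight, completing the proof.

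The main obstacle is controlling $\Upsilon_N$ on the small-probability event $\mathcal{E}_N^c$ on which an eigenvalue of $X_N$ strays into $\tilde{\mathcal{D}}$, making the resolvent singular and destroying both the holomorphicity argument and the Lipschitz truncation; it is precisely here that the exponential concentration of $\|X_N\|$ furnished by the Poincar\'e inequality, rather than just the finite fourth moment hypothesis of Theorem \ref{thm:resreal}, is essential.
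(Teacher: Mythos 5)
Your strategy differs from the paper's in how tightness is established. Both proofs reduce the theorem to tightness of $\{\mathcal{P}_N\}$ on $C(\mathcal{D},\C^{m(m+1)/2})$, and both control the derivative $\Upsilon_N'(z)$ in order to get equicontinuity, but the mechanisms are different. The paper works at the level of exponential tails: it first truncates $R_N(z)$ to $h(X_N)R_N(z)$ with a cutoff $h$ whose support is \emph{disjoint from $\mathcal{D}$}, then applies the Poincar\'e concentration bound of Proposition \ref{proposition:prop4} to the Lipschitz maps $X_N\mapsto (\Upsilon_N(z)-\Upsilon_N(w))_{ij}$ and $X_N\mapsto \Upsilon_N'(z)_{ij}$, obtaining the tail estimates (\ref{sasa})--(\ref{sosa}); the modulus of continuity is then controlled by a dyadic chaining argument over $O(\log N)$ scales, with the second derivative of $\Upsilon_N$ used to fill in the gap between the finest grid and the continuum. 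Your proposal replaces this with an $L^2$ argument: a pointwise bound $\sup_N\sup_{z\in\tilde{\mathcal{D}}}\E|\Upsilon_N(z)|^2\le C$, promoted to $\E\sup_{z\in\mathcal{D}}|\Upsilon_N(z)|^2+\E\sup_{z\in\mathcal{D}}|\Upsilon_N'(z)|^2\le C$ via Cauchy's integral formula on the good event $\mathcal{E}_N$, followed by Chebyshev. That route is more elementary and avoids chaining entirely; it is a legitimate and arguably cleaner alternative.

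However, as written there are two gaps, one of which is serious. First, you set $g_z(x)=\chi(x)/(z-x)$ with $\chi$ equal to $1$ on $[-M,M]$, $M>2\sigma+\delta$, and assert that $g_z$ is ``globally Lipschitz on $\R$ whose Lipschitz constant is bounded uniformly for $z\in\tilde{\mathcal{D}}$.'' This is false if $\tilde{\mathcal{D}}$ meets the real interval $(2\sigma+\delta,M)$: for a real $z$ in this range, $g_z$ has a pole at $x=z$ inside $\{\chi=1\}$, so it is not Lipschitz at all. The Lipschitz bound (\ref{liplip}) then does not apply, and the $L^2$ bound on $\E|\Upsilon_N(z)|^2$ collapses precisely for those real $z\in\mathcal{D}$ that the functional statement is supposed to cover. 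The fix is the one the paper uses: choose the cutoff so that $\mathrm{supp}(h)\cap\tilde{\mathcal{D}}=\emptyset$ (not merely $h=1$ on a large interval); then $h(x)/(z-x)$ is uniformly Lipschitz over $z\in\tilde{\mathcal{D}}$. Second, your bound for the bad event invokes ``the crude estimate $\|R_N(z)\|\le \|X_N\|^{-1}$ for real $z$,'' which is incorrect: the right identity is $\|R_N(z)\|=1/\mathrm{dist}(z,\mathrm{Sp}(X_N))$, and this blows up on $\mathcal{E}_N^c$ when an eigenvalue is near $z$. This second issue is not fatal to your strategy -- you can simply restrict the Cauchy estimate to $\mathcal{E}_N$ and absorb $\mathcal{E}_N^c$ by the exponentially small probability $\P(\mathcal{E}_N^c)$, without ever estimating $\Upsilon_N$ on $\mathcal{E}_N^c$ -- but the stated deterministic bound should not appear. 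A minor third point: for real $z$, the mean bound $|\E R_{ij}(z)-g_\sigma(z)\delta_{ij}|=O(1/N)$ does not come directly from Proposition \ref{proposition:prop1} (whose constants degenerate as $|\Im z|\to 0$); one should instead apply Proposition \ref{proposition:prop2} to the smooth compactly supported test function $h(x)/(z-x)$, as the sub-exponential tails supplied by Poincar\'e make this legitimate.
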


Next, we extend the results of Theorem \ref{thm:resreal} to the matrix entries of $f(X_N) \ $  
for regular $f.\ $ 
We say that a function $f:\R \to  \R$ belongs to $C^n(I),$ if $f$ and its first $n$ derivatives are continuous on 
$I.$  We will use the notation
$C^n_c(\R)$ for the space of $n$ times continuously differentiable functions on $\R$ with compact support.
We define the norm on $C^n(I)$ for compact $I\subset \R$ as
\begin{equation}
\label{normaa}
\|f\|_{C^n(I)}:=\max( |d^kf/dx^k(x)|, \ 0\leq k \leq n, \ x\in I).
\end{equation}
We also define for $f \in C^n(\R)$
\begin{equation}
\label{normsob}
\|f\|_{n,1}:= \max( \int_{-\infty}^{\infty} |d^kf/dx^k(x)| \* dx, \ 0\leq k \leq n),
\end{equation}
and
\begin{equation}
\label{normasobolev}
\|f\|_{n,1,+}:=\max \left( \int_{\R} (|x|+1) \*|\frac{d^lf}{dx^l}(x)| \* dx, \ 0\leq l \leq n \right).
\end{equation}
Clearly, the right hand sides of (\ref{normsob}) and (\ref{normasobolev}) could be infinite.

We start with Proposition \ref{proposition:prop2} that holds both in the real symmetric and Hermitian cases.
\begin{proposition}
\label{proposition:prop2}
Let $X_N=\frac{1}{\sqrt{N}} W_N$ be a random real symmetric (Hermitian) Wigner matrix defined in (\ref{offdiagreal1}-\ref{diagreal2})
((\ref{offdiagherm1}-\ref{diagherm1})).

(i) Let $L$ be some positive number and $f\in C^7_c(\R)$ with $supp(f) \subset [-L, +L].$
Then there exists a constant $Const(L, \mu, \mu_1)$ that depends on $L, \mu, $ and $\mu_1,$ such that
\begin{equation}
\label{chto1}
|\E (f(X_N)_{ii})-\int_{-2\sigma}^{2\sigma} f(x) \* \frac{1}{2 \pi \sigma^2} \sqrt{ 4 \sigma^2 - x^2}  \* dx|
\leq Const(L,\mu, \mu_1)\* \frac{\|f\|_{C^7([-L, L])}}{N}, \ 1\leq i \leq N.
\end{equation}

(ii) Let $f \in C^8(\R), $
then
\begin{align}
\label{chto11}
& |\E (f(X_N)_{ii})-\int_{-2\sigma}^{2\sigma} f(x) \* \frac{1}{2 \pi \sigma^2} \sqrt{ 4 \sigma^2 - x^2}  \* dx|  \\
& \leq Const(\mu, \mu_1) \* \frac{\|f\|_{8,1,+}}{N} ,
\ 1\leq i \leq N. \nonumber
\end{align}

(iii) Let $f\in C^6(\R),$ then there exists a constant $Const(\mu, \mu_1)$ such that
\begin{equation}
\label{chto2}
|\E (f(X_N)_{jk})|\leq Const(\mu, \mu_1) \* \frac{\|f\|_{6,1}}{N}, \ 1\leq j <k\leq N.
\end{equation}

(iv) Let $f\in C^4(\R),$ then there exists a constant $Const(\mu, \mu_1)$ such that
\begin{equation}
\label{chto3}
\V (f(X_N)_{ij})\leq Const(\mu, \mu_1) \*\frac{\|f\|^2_{4,1}}{N}, \ 1\leq i,j \leq N.
\end{equation}

(v) If $\mu$ has finite fifth moment, $\mu_1$ has finite third moment, 
and $f \in C^{10}(\R), $
then one can improve (\ref{chto2}), namely
\begin{equation}
\label{chto4}
|\E (f(X_N)_{jk})|\leq Const(\mu, \mu_1) \* \frac{\|f\|_{10,1}}{N^{3/2}}, \ 1\leq j <k\leq N.
\end{equation}
\end{proposition}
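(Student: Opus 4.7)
The overall plan is to deduce all five estimates from the resolvent bounds stated in Proposition \ref{proposition:prop1}, transporting them to general test functions via the Helffer--Sj\"ostrand functional calculus. Recall that to any $f\in C^{n+1}(\R)$ of sufficient decay one associates an almost-analytic extension $\tilde f$, supported in a thin strip around $\R$, satisfying $|\bar\partial \tilde f(z)|\leq C_n\, |\Im z|^{n}\, \sum_{k\leq n+1}|f^{(k)}(\Re z)|\,\chi(\Im z)$, together with the representation
\[
f(X_N)_{ij}=\frac{1}{\pi}\int_{\C} \bar\partial \tilde f(z)\, R_{ij}(z)\, dA(z).
\]

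I would treat items (i), (iii), and (v) first. Taking expectations and subtracting the leading term gives
\[
\E f(X_N)_{ij} - \delta_{ij}\!\int f\, d\mu_{sc} = \frac{1}{\pi}\!\int \bar\partial \tilde f(z)\, \bigl(\E R_{ij}(z) - \delta_{ij} g_\sigma(z)\bigr)\, dA(z).
\]
Proposition \ref{proposition:prop1} provides pointwise control of the form $|\E R_{ij}(z)-\delta_{ij}g_\sigma(z)|\leq C\,P(|\Im z|^{-1})/N^{\alpha}$, with $\alpha=1$ in (i) and (iii), $\alpha=3/2$ in (v), and $P$ a polynomial whose degree $d$ depends on the case. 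For integrability against $|\bar\partial\tilde f(z)|\lesssim |\Im z|^n$ as $|\Im z|\to 0$ one needs $n\geq d$, which is exactly what dictates the regularity $C^{d+1}$ demanded in each of (i), (iii), (v). Integrating the surviving factor $|f^{(n+1)}(\Re z)|$ in $\Re z$ then yields $\|f\|_{C^n([-L,L])}/N$ in (i) and $\|f\|_{n,1}/N^{\alpha}$ in the other two items, the finiteness of the $\Re z$-integral being built into the compact support of $\chi\cdot f$.

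The variance estimate (iv) is handled by applying the Helffer--Sj\"ostrand identity twice, to the two-variable expression
\[
\V\bigl(f(X_N)_{ij}\bigr)=\frac{1}{\pi^2}\iint \bar\partial \tilde f(z_1)\,\overline{\bar\partial \tilde f(z_2)}\, \mathrm{Cov}\bigl(R_{ij}(z_1),\overline{R_{ij}(z_2)}\bigr)\, dA(z_1)\, dA(z_2),
\]
combined with Cauchy--Schwarz, $|\mathrm{Cov}(R_{ij}(z_1),\overline{R_{ij}(z_2)})|\leq \sqrt{\V R_{ij}(z_1)\,\V R_{ij}(z_2)}$, and the $P_6$ variance bound $\V R_{ij}(z)\leq C\,P_6(|\Im z|^{-1})/N$. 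Each factor contributes at most $C\,|\Im z|^{-3}/\sqrt N$; using the third-order almost-analytic extension (permitted because $f\in C^4$) then cancels both $|\Im z|^{-3}$ singularities, and the two independent $\Re z$-integrals of $|f^{(4)}|$ produce exactly the claimed $\|f\|_{4,1}^2/N$.

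The most delicate case is item (ii), where $f$ has non-compact support. My plan is to split $f=f\chi_R+f(1-\chi_R)$ with a smooth cutoff at a large radius $R$, handling the compactly supported piece by (i) and treating the tail separately. For the tail one exploits that $R_N(z)$ and $g_\sigma(z)$ both behave like $1/z$ for $|z|$ large, so $\E R_{ii}(z)-g_\sigma(z)$ enjoys \emph{additional} decay in $|z|$. The weight $(|x|+1)$ appearing in $\|\cdot\|_{8,1,+}$ is precisely what is needed to convert this extra $|z|$-decay into an integrable bound in $\Re z$, while the jump from $C^7$ in (i) to $C^8$ in (ii) absorbs the derivative of the cutoff coming from $\chi_R'$. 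The main obstacle I anticipate is tracking the resolvent estimate \emph{jointly} in $|\Im z|$ and in $|z|$, with uniform constants, so that the resulting double integral remains finite and still produces the claimed $N^{-1}$ rate.
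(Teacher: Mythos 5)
Your plan for items (i), (iii), (iv), and (v) matches the paper's proof essentially step for step: the almost-analytic extension with $l$ chosen one less than the degree of the polynomial $P_d(|\Im z|^{-1})$ appearing in Proposition~\ref{proposition:prop1} (so $l=6,5,3,9$ respectively, forcing $C^{d+1}$ regularity), the cancellation of the $|y|^l$ factor against the $|\Im z|^{-d}$ singularity, and for (iv) the double Helffer--Sj\"{o}strand representation with Cauchy--Schwarz and the $|\Im z|^{-3}/\sqrt N$ bound on $\sqrt{\V R_{ij}(z)}$. There is nothing to add there.

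Item (ii) is where you depart from the paper, and where your sketch has a real gap. The paper does not split $f$ into a compactly supported piece and a tail; instead it proves a single bound, (\ref{30}), valid uniformly over all of $\C\setminus\R$:
\begin{equation*}
|\E R_{ii}(z)-g_\sigma(z)| \;\leq\; (|z|+M)\,\frac{P_7(|\Im z|^{-1})}{N},
\end{equation*}
obtained from the self-consistent equation (\ref{goodest}) by the same analytic-continuation argument as for (\ref{odinnadtsat100}), only now without the restriction $|z|\leq T$. Plugging this directly into the Helffer--Sj\"{o}strand formula with $l=7$ gives the result, the factor $(|z|+M)\approx(|x|+M)$ on the strip $|\Im z|\leq 1$ being absorbed precisely by the $(|x|+1)$ weight in $\|f\|_{8,1,+}$. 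Notice that the sign of this factor is \emph{growth}, not decay: your phrasing --- that the weight ``converts this extra $|z|$-decay into an integrable bound'' --- has the logic backwards. The obstacle you explicitly flag, namely tracking the resolvent estimate jointly in $|\Im z|$ and $|z|$ with uniform constants, is exactly the content of (\ref{30}); without it the tail piece $f(1-\chi_R)$ gives you a bound that goes to zero as $R\to\infty$ for fixed $N$, but is not $O(1/N)$ for fixed $R$, so the decomposition alone does not deliver the rate. You would still need an $N^{-1}$-uniform estimate valid at large $|\Re z|$, which is (\ref{30}) again, at which point the splitting becomes unnecessary.

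Two smaller remarks. First, the regularity counting you give for (ii) should come out as $l=7$ (so $C^8$), one more than in (i), precisely because (\ref{30}) carries a $P_7$ rather than a $P_6$; your sketch absorbs the jump from $C^7$ to $C^8$ into ``the derivative of the cutoff coming from $\chi_R'$,'' which is not the actual source. Second, the Helffer--Sj\"{o}strand formula in the paper carries a minus sign, $f(X)=-\pi^{-1}\int_\C \bar\partial\tilde f\,(z-X)^{-1}\,dA$; harmless for the estimates but worth keeping consistent.
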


\begin{remark}
In \cite{PRS}, we extend the results of Propositions \ref{proposition:prop2} to the case of 
$\langle u^{(N)}, f(X_N) \* v^{(N)} \rangle, \ $  where $u^{(N)} $ and $v^{(N)} $ are arbitrary nonrandom  vectors from $\C^N.$
\end{remark}

In order to formulate our next theorem, we need to introduce several notations.
Recall that we defined $\omega^2(f) $ in (\ref{omegasq}) as
$$ \omega^2(f)= \V(f(\eta))= \frac{1}{2}\* \int_{-2\sigma}^{2\sigma} \int_{-2\sigma}^{2\sigma} (f(x)-f(y))^2 
\*  \frac{1}{4 \pi^2 \sigma^4} \sqrt{ 4 \sigma^2 - x^2} \* 
\sqrt{ 4 \sigma^2 - y^2} \* dx \* dy, $$ where $\eta$ is distributed according to the Wigner semicircle law (\ref{polukrug}).
In addition, we define
\begin{equation}
\label{alphaf}
\alpha(f):=\E\left(f(\eta)\* \frac{\eta}{\sigma}\right)= 
\frac{1}{\sigma}\* \int_{-2\sigma}^{2\sigma} x\*f(x)\* \frac{1}{2 \pi \sigma^2} \sqrt{ 4 \sigma^2 - x^2}  \* dx
\end{equation}
and
\begin{equation}
\label{betaf}
\beta(f):= \E\left(f(\eta)\*\frac{\eta^2-\sigma^2}{\sigma^2}\right)= \frac{1}{\sigma^2}\int_{-2\sigma}^{2\sigma} f(x)\*(x^2-\sigma^2)\* 
\frac{1}{2 \pi \sigma^2} \sqrt{ 4 \sigma^2 - x^2} .
\end{equation}

We recall that a function $f:\R \to  \R$ is called Lipschitz continuous on an interval $I \subset \R $ if there exists a constant $L $ such that
\begin{equation}
\label{funkciyaLipschitz}
|f(x)-f(y)| \leq L\*|x-y|, \ \ \text{ for all} \ x,y \in I.
\end{equation}
We define
\begin{equation}
\label{snova}
|f|_{\mathcal{L}, \R}=\sup_{x\not=y}\*\frac{|f(x)-f(y)|}{|x-y|},
\end{equation}
and
\begin{equation}
\label{snova1}
|f|_{\mathcal{L}, \delta}=\sup_{x\not=y, \ x,y \in [-2\sigma -\delta, 2\sigma +\delta]}\*\frac{|f(x)-f(y)|}{|x-y|}.
\end{equation}

Finally, let us introduce a $C^{\infty}(\R) $ function $h(x)$ with compact support such that
\begin{equation}
\label{amerika}
h(x)\equiv 1 \ \text{for} \  x \in [-2\*\sigma -\delta, 2\*\sigma +\delta], \ \delta>0.
\end{equation}

\begin{theorem}
\label{thm:real}
Let $X_N=\frac{1}{\sqrt{N}} W_N$ be a random real symmetric Wigner matrix
(\ref{offdiagreal1}-\ref{diagreal2}).
Let $f:\R \to  \R $ be four times continuously differentiable on $[-2\*\sigma -\delta, 2\*\sigma +\delta] $ for some $\delta>0.\ $  
Then the following holds.

(i) For $ i =j, $
\begin{equation}
\label{trudno1}
\sqrt{N} \*\left( f(X_N)_{ii} - \E\left( (fh)(X_N)_{ii}\right)   \right) 
\end{equation}
converges in distribution to the sum of two independent  random variables \\
$ \frac{\alpha(f)}{\sigma}\* W_{ii} $ and $N(0, v_1^2(f)), $
where $h$ is an arbitrary $C^{\infty}_c(\R) $ function satisfying (\ref{amerika}), and
\begin{equation}
\label{vsqf}
v_1^2(f):= 2 \* \left(\omega^2(f) -\alpha^2(f) +\frac{\kappa_4(\mu)}{2\*\sigma^4} \* \beta^2(f)\right).
\end{equation}
If $f$ is seven times continuously differentiable on $[-2\sigma-\delta, 2\sigma+\delta], $ then the statement still holds if one replaces
$\E\left( (fh)(X_N)_{ii}\right)$ in (\ref{trudno1}) by
\begin{equation}
\label{c1f}
\int_{-2\sigma}^{2\sigma} f(x) \* \frac{1}{2 \pi \sigma^2} \sqrt{ 4 \sigma^2 - x^2}  \* dx.
\end{equation}

(ii) For $ i \not=j, $
\begin{equation}
\label{trudno2}
\sqrt{N} \* \left( f(X_N)_{ij} - \E\left( (fh)(X_N)_{ij}\right)\right) 
\end{equation}
converges in distribution to the sum of two independent  random variables \\
$ \frac{\alpha(f)}{\sigma} \*W_{ij} $ and $N(0, d^2(f)), $
with
\begin{equation}
\label{dsqf}
d^2(f):= \omega^2(f) -\alpha^2(f).
\end{equation}
If $f$ is six times continuously differentiable on $[-2\sigma-\delta, 2\sigma+\delta], $ then one can replace
$\E\left( (fh)(X_N)_{ij}\right)$ in (\ref{trudno2}) by $0.$

(iii) For any finite $m, $ the normalized matrix entries 
\begin{equation}
\label{mnogo}
\sqrt{N} \*\left( f(X_N)_{ij} - \E ((fh)(X_N)_{ij})\right), \ 1\leq i \leq j \leq m, 
\end{equation}
are independent in the limit $N \to \infty. $ 
\end{theorem}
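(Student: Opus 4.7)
Proof plan. The strategy is to establish the theorem first for test functions of the form (\ref{ffff}), by a direct reduction to Theorem \ref{thm:resreal}, and then to extend to arbitrary $f\in C^4$ by an approximation argument based on the variance bound (\ref{chto3}) of Proposition \ref{proposition:prop2}, which makes the error uniformly small in $N$. Before anything else I reduce to compactly supported test functions: multiplying $f$ by a cutoff $h\in C^\infty_c(\R)$ satisfying (\ref{amerika}), we have $f(X_N)=(fh)(X_N)$ on the event $\{\|X_N\|\le 2\sigma+\delta\}$, which carries probability tending to one by \cite{BYin}, while the constants $\omega^2(f)$, $\alpha(f)$, $\beta(f)$ depend only on $f|_{[-2\sigma,2\sigma]}$. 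Thus I may and do assume $f\in C^4_c(\R)$.

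For $f_K(x)=\sum_{k=1}^K c_k/(z_k-x)$ with $\Im z_k\ne 0$, the resolvent representation gives $f_K(X_N)_{ij}=\sum_k c_k (R_N(z_k))_{ij}$, so Theorem \ref{thm:resreal} combined with linearity yields
\[
\sqrt{N}\bigl(f_K(X_N)_{ij}-\E f_K(X_N)_{ij}\bigr)\;\Rightarrow\;\Bigl(\sum_{k}c_k g_\sigma^2(z_k)\Bigr)W_{ij}+G_{ij}(f_K),
\]
where $G_{ij}(f_K)=\sum_k c_k g_\sigma^2(z_k) Y_{ij}(z_k)$ is Gaussian and independent of $W_{ij}$, and different index pairs $(i,j)$ give independent limits. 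Using the resolvent identity (\ref{semicircle}) I verify $\sum_k c_k g_\sigma^2(z_k)=\alpha(f_K)/\sigma$; direct substitution of $f(x)=(z-x)^{-1}$ into (\ref{omegasq})--(\ref{betaf}) together with the covariance formulas (\ref{dispersii1})--(\ref{dispersii6}) then matches $\V G_{ij}(f_K)$ with $v_1^2(f_K)$ on the diagonal and $d^2(f_K)$ off-diagonal. This establishes the theorem on the dense class (\ref{ffff}).

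To pass to a general $f\in C^4_c(\R)$, I discretize the Helffer--Sj\"ostrand representation $f(X_N)=-\tfrac{1}{\pi}\int_\C \bar\partial\tilde f(z)\,R_N(z)\,dA(z)$ by a Riemann sum $f_K$ of the form (\ref{ffff}) chosen so that $\|f-f_K\|_{4,1}\to 0$ as $K\to\infty$. Proposition \ref{proposition:prop2}(iv) then gives
\[
\V\!\bigl(\sqrt{N}\,(f-f_K)(X_N)_{ij}\bigr)\le \mathrm{Const}(\mu,\mu_1)\,\|f-f_K\|_{4,1}^2
\]
uniformly in $N$, while continuity of $f\mapsto(\alpha(f),\omega^2(f),\beta(f))$ in a sufficiently strong norm on a fixed compact forces the constants for $f_K$ to converge to those for $f$. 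A standard Slutsky-type double-approximation argument then upgrades the weak convergence from $f_K$ to $f$, proving (i) and (ii) with centering $\E((fh)(X_N)_{ij})$. To replace this centering by the deterministic integral (\ref{c1f}) in (i) (or by $0$ off-diagonal in (ii)) under the stronger smoothness hypothesis, I invoke (\ref{chto1}) or (\ref{chto2}) from Proposition \ref{proposition:prop2}, whose $O(1/N)$ error is $o(1/\sqrt N)$. The joint independence (iii) is inherited from the independence of the fields $\{Y_{ij}(\cdot)\}$ across distinct $(i,j)$ in Theorem \ref{thm:resreal}, since the limit for each entry is a measurable functional of $W_{ij}$ and $Y_{ij}(\cdot)$ alone.

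The main technical obstacle is the construction of the rational approximant $f_K$ with simultaneous control of $\|f-f_K\|_{4,1}$; this is precisely where the hypothesis of four continuous derivatives enters, matching the regularity threshold built into Proposition \ref{proposition:prop2}(iv). The Helffer--Sj\"ostrand discretization with an almost-analytic extension $\tilde f$ of sufficiently high order supplies such an approximant and is the only nontrivial ingredient beyond Theorem \ref{thm:resreal} and Proposition \ref{proposition:prop2}.
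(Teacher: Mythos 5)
Your overall architecture is the same as the paper's: establish the theorem for linear combinations of resolvent kernels via Theorem~\ref{thm:resreal}, verify by direct computation that the limiting coefficient and variance match $\alpha(f)/\sigma$, $v_1^2(f)$, $d^2(f)$, then approximate a general $C^4$ test function by such linear combinations, controlling the discrepancy uniformly in $N$ through the variance bound~(\ref{chto3}) and finishing with Slutsky. Where you differ is in the construction of the approximating sequence: the paper uses Stone--Weierstrass to show density of the class~(\ref{1200}) — cutoff-tempered rational functions $\sum a_l h_l(x)/(z_l-x)$ with $h_l\in C^\infty_c$ satisfying~(\ref{hhh}) — in $C^4_c(\R)$, whereas you propose a Riemann-sum discretization of the Helffer--Sj\"ostrand representation.

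This substitution introduces a genuine gap. A rational approximant $f_K(x)=\sum_k c_k/(z_k-x)$ of the bare form~(\ref{ffff}) decays only like $(\sum_k c_k)/x$ as $|x|\to\infty$; since $\sum_k c_k$ is a Riemann sum for $\int_{\C}\bar\partial\tilde f\,dA=0$ it tends to zero as the mesh shrinks but is generically nonzero at each finite $K$, so $\int_\R |f_K(x)|\,dx=\infty$ and hence $\|f_K\|_{4,1}=\infty$. Consequently $\|f-f_K\|_{4,1}$ is not finite (recall you have already made $f$ compactly supported) and Proposition~\ref{proposition:prop2}(iv) cannot be invoked for the difference, which is the step your entire approximation argument hinges on. Furthermore, even granting the integrability at infinity, the claimed convergence $\|f-f_K\|_{4,1}\to0$ by Riemann discretization is delicate near the real axis: with an almost-analytic extension of order $3$ (the one compatible with $f\in C^4$) the fourth $x$-derivative of the integrand behaves like $\bar\partial\tilde f(z)/(z-x)^5\sim |\Im z|^{-2}$ close to $\R$, which is integrable but unbounded, and uniform-mesh Riemann sums of unbounded integrands need not converge. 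The paper's route avoids both problems: the approximants~(\ref{1200}) carry a compactly supported cutoff $h_l$, so $\|f-h f_n\|_{4,1}$ is controlled by the $C^4$ norm on a fixed compact set, which Stone--Weierstrass delivers directly. Your proof would be repaired by replacing $f_K$ with $hf_K$ (and noting that $(hf_K)(X_N)=(f_K)(X_N)$ on $\{\|X_N\|\le 2\sigma+\delta\}$, so the limit law for $hf_K$ coincides with that for $f_K$), together with a quadrature rule for the Helffer--Sj\"ostrand integral with guaranteed $C^4$-uniform convergence on $[-A,A]$ — but as written, the key approximation step does not go through.
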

We follow Theorem \ref{thm:real} with several remarks.

\begin{remark}
If $f \in C^4(\R), $ and $\|f\|_{4,1} <\infty, $ where $\|f\|_{4,1}$ is defined in (\ref{normsob}), then
it follows from Proposition \ref{proposition:prop2} that
the centralizing constants in (\ref{trudno1}) and (\ref{trudno2})
can be taken to be $\E (f(X_N))_{ij}. $
\end{remark}

\begin{remark}
\label{ZAM}
It follows from the definition of the fourth cumulant that $\frac{\kappa_4(\mu)}{2\*\sigma^4} \geq -1, $ with the equality taking place only when
$\mu$ is Bernoulli.  Since $ 1, \ \frac{x}{\sigma}, \ $ and $ \frac{x^2-\sigma^2}{\sigma^2}, $ are the first three orthonormal polynomials associated with
the semicircle measure (\ref{polukrug}), it follows from the Bessel inequality 
that for the diagonal entries the variance $v_1^2(f)$ of the Gaussian component in (\ref{vsqf}) is zero if and only if $f(x) $ 
is either a linear function of $x$ or $f(x)$ a quadratic polynomial and $\mu $ Bernoulli.  Similarly, for the off-diagonal entries one has that
the variance $d^2(f)$ of the Gaussian component in (\ref{dsqf}) is zero if and only if $f(x)$ is linear.

The statement of the Theorem \ref{thm:real} also implies that the limiting distribution of 
the normalized $(ij)$th entry of $f(X_N)$ is Gaussian
if and only if either $\mu $ for $i\not=j$ (correspondingly $\mu_1$ for $i=j$) is Gaussian or $\E(\eta\*f(\eta))=0.\ $ The same holds 
in the Hermitian case.
\end{remark}

\begin{remark}
Utilizing Proposition 1 in \cite{Sh}, one can extend the result of Theorem \ref{thm:real} to the test functions satisfying
\begin{equation}
\label{sobolev}
\|f\|_s^2 = \int (1+|k|)^{2s} \* |\hat{f}(k)|^2 \* dk < \infty, \ s>3, \ \ \hat{f}(k):=\int e^{-2\*\pi\*k\*x}\* f(x) \*dx,
\end{equation}
and, more generally, to the functions that coincide with the functions in (\ref{sobolev}) on $[-2\*\sigma-\delta, 2\*\sigma +\delta]$
for some $\delta>0 $ (\cite{ORS}).
\end{remark}

\begin{remark}
There has been a significant body of work on the Central Limit Theorem for $\Tr f(X_N)= \sum_{i=1}^N f(X_N)_{ii}. \ $  We refer the reader to 
\cite{AZ}, \cite{BY}, \cite{BWZ}, \cite{LP1}, and \cite{Sh}, and the references therein.  In particular, in \cite{Sh}, 
the CLT is proved assuming that the fourth moment of the marginal distribution is finite and
$\|f\|_s <\infty$ for $s>3/2.$
\end{remark}

If $\mu $ and $\mu_1$ satisfy the Poincar\'e inequality, one can prove convergence in distribution for the matrix entries of Lipschitz continuous test 
functions.

\begin{theorem}
\label{thm:real1}
Let $X_N=\frac{1}{\sqrt{N}} W_N$ be a random real symmetric Wigner matrix 
(\ref{offdiagreal1}-\ref{diagreal2}) and
the marginal distributions $\mu $ and $\mu_1$ satisfy the
Poincar\'e inequality (\ref{poin}). Then the following holds.

(i) If $f:\R\to\R$ is Lipschitz continuous on $[-2\*\sigma -\delta, 2\*\sigma +\delta] $ that
satisfies a sub-exponential growth condition
\begin{equation}
\label{exprost}
|f(x)|\leq a \* \exp(b\*|x|) \ \ \text{ for all \ } x \in \R
\end{equation}
for some positive constants $a$ and $b,$
then the results of Theorem \ref{thm:real} hold with the centralizing constants $\E (f(X_N)_{ij}))$
in (\ref{trudno1}) and (\ref{trudno2}).

Moreover,
\begin{align}
\label{sanjosesh1}
&  \P \left( |f(X_N)_{ij}- \E(f(X_N)_{ij})| \geq t \right) \\
& \leq  2\*K \* \exp\left( -\frac{\sqrt{\upsilon \*N} \* t}{2 \*|f|_{\mathcal{L},\delta}} \right) + (2\*K+ o(1))\* 
\exp\left(-\frac{\sqrt{\upsilon\*N}}{2}\*\delta\right), \nonumber
\end{align}
where $|f|_{\mathcal{L},\delta} $ is defined in (\ref{snova1}),
\begin{equation}
\label{KK}
K=-\sum_{i\geq 0} 2^i\*\log(1-2^{-1}\*4^{-i}),
\end{equation}
and $\upsilon $ is the constant in the Poincar\'e inequality (\ref{poin}).

(ii) If $f\in C^7(\R)$ (correspondingly, $f\in C^6(\R), \ f\in C^{10}(\R)) $ and $f$ satisfies the subexponential growth condition
(\ref{exprost}), then the estimate (\ref{chto1}) (correspondingly, (\ref{chto2}),(\ref{chto4})) from Proposition \ref{proposition:prop2} holds.

(iii) If $f$ is a Lipschitz continuous function on $\R,$
then \begin{align}
\label{sanjosesh2}
&  \P \left( |f(X_N)_{ij}- \E(f(X_N)_{ij})| \geq t \right) \\
& \leq  2\*K \* \exp\left( -\frac{\sqrt{\upsilon \*N} \* t}{2\*|f|_{\mathcal{L}, \R}} \right), \nonumber
\end{align}
where $|f|_{\mathcal{L},\R} $ is defined in (\ref{snova}).
\end{theorem}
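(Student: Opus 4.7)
The plan rests on two consequences of the Poincar\'e inequality for the product measure of the independent entries of $W_N$ (which inherits the constant $\upsilon$ by tensorization): (a) the variance bound $\V(f(X_N)_{ij})\le|f|_{\mathcal L,\R}^2/(\upsilon N)$ already invoked in the introduction, and (b) the exponential concentration
\[
\P(|F-\E F|\ge t)\le 2K\exp\!\left(-\sqrt{\upsilon}\,t/(2L)\right)
\]
for every $L$-Lipschitz observable $F$, with the explicit universal constant $K$ from \eqref{KK}. Both rely on the fact, from \cite{CQT}, that $W\mapsto f(X_N)_{ij}$ is Lipschitz in the Euclidean norm on the independent entries of $W_N$ with Lipschitz constant $O(|f|_{\mathcal L,\R}/\sqrt N)$.

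\textbf{Parts (iii) and concentration in (i).} Applying (b) with $F=f(X_N)_{ij}$ and $L=|f|_{\mathcal L,\R}/\sqrt N$ yields \eqref{sanjosesh2} directly. For \eqref{sanjosesh1}, I would extend $f\big|_{[-2\sigma-\delta,\,2\sigma+\delta]}$ to a Lipschitz $\tilde f$ on $\R$ with $|\tilde f|_{\mathcal L,\R}=|f|_{\mathcal L,\delta}$ (inf-sup convolution works). On the event $\mathcal E:=\{\|X_N\|\le 2\sigma+\delta\}$ one has $f(X_N)=\tilde f(X_N)$ as matrices, so $f(X_N)_{ij}=\tilde f(X_N)_{ij}$; the first term of \eqref{sanjosesh1} is then \eqref{sanjosesh2} applied to $\tilde f$, while the second comes from applying (b) to the $1/\sqrt N$-Lipschitz observable $\|X_N\|$ together with $\E\|X_N\|\to 2\sigma$ (Bai--Yin). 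The cost of interchanging $\E f(X_N)_{ij}$ with $\E\tilde f(X_N)_{ij}$ is controlled by the sub-exponential growth \eqref{exprost} times $\P(\mathcal E^c)$, hence is exponentially small in $\sqrt N$.

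\textbf{CLT in part (i).} I would approximate $\tilde f$ in Lipschitz norm by a sequence $f_m$ in the class \eqref{ffff}: first mollify $\tilde f$ to a smooth $g_\varepsilon$ with $|\tilde f-g_\varepsilon|_{\mathcal L,\R}\to 0$ (valid in the $C^1$-Lipschitz case, which the general Lipschitz case reduces to after a negligible perturbation absorbed via (a)), then discretize the Helffer--Sj\"ostrand contour representation of $g_\varepsilon$ to produce $f_m$ of the form \eqref{ffff} with $|g_\varepsilon-f_m|_{\mathcal L,\R}\to 0$. By (a),
\[
\V\!\Bigl(\sqrt N\bigl(f(X_N)_{ij}-f_m(X_N)_{ij}\bigr)\Bigr)\le \frac{|\tilde f-f_m|_{\mathcal L,\R}^2}{\upsilon}\longrightarrow 0
\]
uniformly in $N$ as $m\to\infty$. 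Theorem \ref{thm:real} applies to each $f_m$, and the limiting parameters $\omega^2(f),\alpha(f),\beta(f)$ depend continuously on $f|_{[-2\sigma,2\sigma]}$ in the uniform norm, so a standard Slutsky argument transfers the CLT to $f$. The replacement of the centralizing $\E((fh)(X_N)_{ij})$ by $\E(f(X_N)_{ij})$ costs $\E((f-fh)(X_N)_{ij})$, exponentially small in $\sqrt N$ by the same tail argument as above; the independence statement (iii) of Theorem \ref{thm:real} is preserved in the limit.

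\textbf{Part (ii) and main obstacle.} For $f\in C^k(\R)$ ($k=7,6,10$) with sub-exponential growth, fix a large $L$ and pick $\chi_L\in C^\infty_c(\R)$ equal to $1$ on $[-L,L]$. The truncation $f\chi_L$ lies in the hypothesis class of the corresponding assertion of Proposition \ref{proposition:prop2}, yielding \eqref{chto1}, \eqref{chto2}, or \eqref{chto4} for $f\chi_L$; the discarded contribution $\E\bigl((f(1-\chi_L))(X_N)_{ij}\bigr)$ vanishes on $\{\|X_N\|\le L\}$ and on its complement is bounded by the sub-exponential growth of $f$ times $\P(\|X_N\|>L)\lesssim\exp(-c\sqrt N\,(L-2\sigma))$ (from (b) applied to $\|X_N\|$), which is negligible compared to $1/N$ or $1/N^{3/2}$ for $L$ large. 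The main technical obstacle lies in the CLT portion of (i): the Poincar\'e bound gives a remainder variance of order $|\tilde f-f_m|_{\mathcal L,\R}^2/\upsilon$, so uniform approximation of $\tilde f$ by the rational functions in \eqref{ffff} is insufficient and one must approximate in Lipschitz norm, which in turn forces the two-step (mollify then Helffer--Sj\"ostrand-discretize) construction above and a separate reduction from the general Lipschitz case to the $C^1$-Lipschitz one.
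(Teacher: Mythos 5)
Your treatment of parts (ii), (iii), and of the two concentration inequalities in (i) is essentially identical to the paper's Proposition \ref{proposition:prop4}: you invoke the Hilbert--Schmidt Lipschitz property of $X\mapsto f(X)_{ij}$ from \cite{CQT}, the tensorized Poincar\'e inequality and the exponential concentration (\ref{BOLUKLON}), and you use the sub-exponential growth together with the concentration of $\|X_N\|$ around $2\sigma$ to pass between $f$, its Lipschitz extension $\tilde f$, and the truncations $f\chi_L$ and $fh$. Nothing to flag there.

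For the CLT in part (i) your route genuinely differs from the paper's. The paper first treats $f$ analytic in a neighborhood of $[-2\sigma,2\sigma]$ via the Cauchy formula $f(X_N)_{ij}=\frac{1}{2\pi i}\int_\gamma f(z)(R_N(z))_{ij}\,dz$, passes to the limit using the \emph{functional} convergence of $\Upsilon_N$ established in Theorem \ref{thm:resreal1} (which requires the tightness argument of Section \ref{sec:flucmat}), and then evaluates the contour integrals against $g_\sigma^2$, $g_\sigma^3$ and $\varphi(z,w)$ to identify $\alpha(f),\beta(f),\omega^2(f)$. You instead propose to approximate directly by functions in the class (\ref{ffff}) and invoke only the finite-dimensional convergence of Theorem \ref{thm:resreal} (equivalently, Theorem \ref{thm:real} for those rational test functions). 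This is a more elementary route that bypasses the functional convergence/tightness machinery of Theorem \ref{thm:resreal1} entirely, at the cost of an explicit two-stage approximation (mollify, then Riemann-discretize a Helffer--Sj\"ostrand representation). It is a viable alternative scheme, and it is closer in spirit to the paper's proof of Theorem \ref{thm:real} (which approximates in $C^4$ norm via Stone--Weierstrass) than to the paper's proof of Theorem \ref{thm:real1}.

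There is, however, one step in your CLT argument that does not go through as stated. You need $|\tilde f-f_m|_{\mathcal L,\R}\to0$ so that the Poincar\'e variance bound $\V(\sqrt N(\tilde f-f_m)(X_N)_{ij})\le|\tilde f-f_m|^2_{\mathcal L,\R}/\upsilon$ is small; you correctly identify this as the obstacle, and you propose to reduce the general Lipschitz case to the $C^1$ case ``after a negligible perturbation absorbed via (a).'' That reduction does not exist: if $\tilde f$ has an essential jump of $\tilde f'$ at some interior point (take $\tilde f(x)=|x|$), then for every $C^1$ function $g$ one has $|\tilde f-g|_{\mathcal L,\delta}\geq 1$ on any neighbourhood of the kink, so no decomposition $\tilde f=g+h$ with $g$ smooth and $|h|_{\mathcal L}$ small is available, and the variance bound (a) applied to $h$ cannot compensate. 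In other words, density in the Lipschitz semi-norm of $C^1$ (or analytic, or rational) functions in the class of Lipschitz functions fails, and that is exactly what you need. The paper's proof of Theorem \ref{thm:real1} takes the same density statement for granted when it asserts the existence of analytic $f_n$ with $|f_n-f|_{\mathcal L,\delta}\to0$, so you are not alone in glossing over it; but the workaround you add (``reduce to the $C^1$-Lipschitz case'') is explicitly not correct and should not be presented as a fix. If you want to go this route you must either prove a sharper variance bound for $f(X_N)_{ij}$ in a weaker-than-Lipschitz norm (e.g.\ an $L^2$- or Sobolev-type bound that is continuous in $\|\cdot\|_\infty$), or restrict the statement to $C^1$ test functions on $[-2\sigma-\delta,2\sigma+\delta]$, for which the mollify-then-discretize scheme does work.
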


\begin{remark}
If $f$ is a Lipschitz continuous function on $[-2\*\sigma -\delta, 2\*\sigma +\delta] $ that
does not satisfy the subexponential growth condition (\ref{exprost}) then the results of 
Theorem \ref{thm:real} still hold with the centralizing constants $\E \left((fh)(X_N)_{ij})\right))$
in (\ref{trudno1}) and (\ref{trudno2}).
\end{remark}

In the second part of this section, we formulate the analogous results in the 
the Hermitian case.  

As in the real symmetric case, consider a matrix-valued random field 
$$\Upsilon_N(z)=\sqrt{N} \* \left(R^{(m)}(z)-g_{\sigma}(z)\*I_m \right), \ z \in \C \setminus [-2\*\sigma, 2\*\sigma].$$
Clearly, $\Upsilon_N(z)$ is a random function on  $ \C \setminus [-2\*\sigma, 2\*\sigma] $ with values in the space of
complex $m\times m$ matrices. $\Upsilon_N(x)$ is Hermitian for real $ x$ and, more generally, $\Upsilon_N(z)=\Upsilon_N(\bar{z})^*. $

\begin{theorem}
\label{thm:resherm}
Let $X_N=\frac{1}{\sqrt{N}} W_N$ be a Hermitian Wigner matrix (\ref{offdiagherm1}-\ref{diagherm1}).
The random field 
$ \Upsilon_N(z) $ converges in finite-dimensional distributions to a random field 
\begin{equation}
\label{functionalconv1}
\Upsilon (z) = g_{\sigma}^2(z)\*(W^{(m)} + Y(z)),
\end{equation}
where $W^{(m)}$ is the $m\times m$ upper-left corner submatrix of a Wigner matrix $W_N, $ and
$Y(z)=\left(Y_{ij}(z)\right), \ 1\leq i,j \leq m, \ $  is a Gaussian random field such that 

\begin{align}
\label{dispersii11}
& \Cov(\Re Y_{jj}(z), \Re Y_{jj}(w))= \kappa_4(\mu)\* \Re g_{\sigma}(z) \* \Re g_{\sigma}(w) +\sigma^4\* \varphi_{++}(z,w), \\
\label{dispersii12}
& \Cov(\Im Y_{jj}(z), \Im Y_{jj}(w))= \kappa_4(\mu)\* \Im g_{\sigma}(z) \*\Im g_{\sigma}(w) +
\sigma^4\* \varphi_{--}(z,w), \\
\label{dispersii13}
& \Cov(\Re Y_{jj}(z), \Im Y_{jj}(w) )= \kappa_4(\mu)\* \Re g_{\sigma}(z) \* \Im g_{\sigma}(w)  
+ \sigma^4\* \varphi_{+-}(z,w), \\
\label{dispersii14}
& \Cov(\Re Y_{ij}(z),  \Re Y_{ij}(w))= \frac{1}{2}\*\sigma^4\* (\varphi_{++}(z,w)+\varphi_{--}(z,w)), \ i\not=j, \\
\label{dispersii15}
& \Cov(\Im Y_{ij}(z), \Im Y_{ij}(w))= \frac{1}{2}\*\sigma^4\* (\varphi_{++}(z,w)+\varphi_{--}(z,w)), \ i\not=j,\\
\label{dispersii16}
& \Cov(\Re Y_{ij}(z), \Im Y_{ij}(w) )= \frac{1}{2}\*\sigma^4\* (\varphi_{+-}(z,w)-\varphi_{+-}(w,z)), \ i\not=j.
\end{align}
where the fourth cumulant $\kappa_4(\mu):= \E |(W_N)_{12}|^4- 2\*\E|(W_N)_{12}|^2=m_4-2\sigma^4.$

In addition, for any finite $r\geq 1,$ the 
entries $Y_{i_lj_l}(z_1), \ 1\leq i_l\leq j_l \leq m, \ 1\leq l\leq r,$ are independent provided 
$(i_{l_1}, j_{l_1})\not=(i_{l_2}, j_{l_2}) $ for $1\leq l_1\not=l_2\leq r.$
\end{theorem}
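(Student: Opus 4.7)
The plan is to follow the strategy outlined in the introduction for the real symmetric case (Theorem \ref{thm:resreal}), making the necessary modifications to account for the complex-valued entries in the Hermitian setting. First, consider a single diagonal entry $R_{NN}(z)$. By the Schur complement formula,
\begin{equation*}
R_{NN}=\left(z-\tfrac{1}{\sqrt{N}}W_{NN}-\tfrac{1}{N}b^{*}Gb\right)^{-1},
\end{equation*}
where $b$ is the complex $(N-1)$-vector $(W_{i N})_{1\le i\le N-1}$ and $G=G(z)$ is the resolvent of the upper-left $(N-1)\times(N-1)$ submatrix $\tilde X$. The key point is that $b$ and $G$ are independent, and $\E b^{*}Gb=\sigma^{2}\E\Tr G$. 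Adding and subtracting $\sigma^{2}\E\tr_{N-1}G$, and using Proposition \ref{proposition:prop1} together with the interlacing $\E\tr_{N-1}G=g_\sigma(z)+O(1/N)$, one reduces the problem to showing that
\begin{equation*}
\tfrac{1}{\sqrt{N}}\bigl(b^{*}Gb-\sigma^{2}\E\Tr G\bigr)
\end{equation*}
converges jointly (at a finite collection of points $z_1,\dots,z_r$, and simultaneously for the sesquilinear forms arising from off-diagonal entries) to a complex Gaussian vector, independent of $W^{(m)}$.

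For off-diagonal entries $R_{ij}(z)$ with $i<j\le m$, a standard Schur complement identity (block $2\times 2$ inversion) expresses $R_{ij}$ in terms of the two independent vectors $b^{(i)},b^{(j)}$ of entries of column $i$ and $j$ outside the $\{i,j\}$-block, the associated $(N-2)\times(N-2)$ resolvent $\hat G$, and the $2\times 2$ upper-left block of $W_N$. The fluctuating part decomposes into the diagonal sesquilinear forms $b^{(\ell)*}\hat G b^{(\ell)}-\sigma^2\E\Tr\hat G$ plus the cross sesquilinear form $b^{(i)*}\hat G b^{(j)}$, and together with $W_{ij}/\sqrt{N}$ these constitute the building blocks whose joint limit must be identified.

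The Gaussian limit will be obtained by applying the multidimensional CLT for random sesquilinear forms (Theorem \ref{thm:bggm} in the Appendix, taken from Theorem 7.3 of \cite{BY}) to families of Hermitian matrices of the form $x\,\Re\hat G(z)+y\,\Im\hat G(z)$ with varying $z$ and to the cross forms. The hypotheses of that theorem reduce to (a) a uniform operator-norm bound $\|\hat G(z)\|\le|\Im z|^{-1}$, and (b) convergence in probability of normalized traces such as $\frac{1}{N}\Tr(\hat G(z)\hat G(w)^{*})$ and $\frac{1}{N}\sum_i \hat G_{ii}(z)\hat G_{ii}(w)$ to deterministic limits; these limits are obtained from the semicircle law plus the self-averaging bounds of Proposition \ref{proposition:prop1}, yielding exactly $\varphi_{\pm\pm}$ and $\varphi_{+-}$ after writing $\Re$ and $\Im$ in terms of $G(z)$ and $G(\bar z)$.

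The main obstacle will be the bookkeeping of the covariance structure. In the Hermitian case, $b$ has independent complex entries with $\E b_k=0$, $\E|b_k|^2=\sigma^2$, $\E b_k^2=0$, and fourth cumulant $\kappa_4(\mu)=m_4-2\sigma^4$. For a deterministic Hermitian $A$,
\begin{equation*}
\V(b^{*}Ab)=\sigma^{4}\Tr(AA^{*})+\kappa_4(\mu)\sum_i |A_{ii}|^2,
\end{equation*}
which gives the factor $\sigma^4$ (rather than $2\sigma^4$) in the diagonal covariances \eqref{dispersii11}--\eqref{dispersii13}. For off-diagonal entries one instead encounters bilinear forms $b^{(i)*}\hat G b^{(j)}$ in two independent complex vectors, whose variance is $\sigma^4\Tr(\hat G\hat G^{*})/2$ type contributions—here the factor $\tfrac12$ and the antisymmetric combination $\varphi_{+-}(z,w)-\varphi_{+-}(w,z)$ in \eqref{dispersii16} arise from the vanishing of $\E b_k^2$ and the symmetry properties of $\hat G(z)$ versus $\hat G(\bar z)$. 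Once these elementary (but delicate) second-moment identities are written out and combined with the deterministic limits of the traces, one obtains \eqref{dispersii11}--\eqref{dispersii16} and hence the representation $\Upsilon(z)=g_\sigma^2(z)(W^{(m)}+Y(z))$. The independence across distinct index pairs $(i_l,j_l)$ follows from the independence of the corresponding off-diagonal entries of $W_N$ and of the vectors $b^{(i)},b^{(j)}$ appearing in the Schur complement representations.
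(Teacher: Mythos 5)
Your overall strategy---Schur complement plus the Bai--Yao/Benaych-Georges--Guionnet--Maida CLT for random sesquilinear forms---is exactly the one the paper uses, and the second-moment identities you invoke for the complex case (with the factor $\sigma^4$ instead of $2\sigma^4$ on the diagonal, the factor $\tfrac12$ and the antisymmetric $\varphi_{+-}(z,w)-\varphi_{+-}(w,z)$ for the off-diagonal, all traceable to $\E b_k^2=0$) are the right ones. The one structural point where you depart from the paper, and where the argument as written has a genuine gap, is the treatment of the off-diagonal entries by a separate $2\times 2$ Schur complement for each pair $(i,j)$.

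The difficulty is this: if you Schur-complement out only rows/columns $i$ and $j$, the inner resolvent $\hat G$ depends on the pair $(i,j)$. To prove \emph{joint} convergence of several entries---say $R_{12}(z_1)$ and $R_{13}(z_2)$---you would then be comparing sesquilinear forms built from \emph{different} matrices $\hat G^{(1,2)}$ and $\hat G^{(1,3)}$, and Theorem \ref{thm:bggm} requires all the forms to share a common vector family $u^{(1)},\dots,u^{(m)}$ and a common family $\mathcal{M}_N^{(s,t)}$. You would need an auxiliary finite-rank-perturbation argument to replace each $\hat G^{(i,j)}$ by a single common resolvent at negligible cost, and you have not supplied it. The paper avoids this entirely by Schur-complementing out the whole $m\times m$ block at once: writing $R^{(m)}(z)=(zI_m-\tfrac{1}{\sqrt N}W^{(m)}-B^*\tilde R(z)B)^{-1}$ as in (\ref{ugol})--(\ref{mnogouglov}), with a single $(N-m)\times(N-m)$ resolvent $\tilde R$ and vectors $x^{(1)},\dots,x^{(m)}$, so that \emph{all} entries $Y_{ij}(z)=\sqrt N(\langle x^{(i)},\tilde R(z)x^{(j)}\rangle-\sigma^2 g_\sigma(z)\delta_{ij})$ are sesquilinear forms in the same objects and Theorem \ref{thm:bggm} applies directly (in the Hermitian case one simply allows the linear-combination coefficients $a_{s,t}^{(i)},b_{s,t}^{(i)}$ for $s<t$ to be complex, as the paper notes). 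I would either switch to the $m\times m$ block decomposition, or else add a lemma that passes from $\hat G^{(i,j)}$ to a common $\tilde R$ via resolvent expansion, before you can legitimately invoke the multidimensional sesquilinear CLT to obtain (\ref{dispersii11})--(\ref{dispersii16}) and the independence across distinct index pairs.
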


\begin{remark}
\label{xherm}
If $z=x \in \R \setminus [-2\*\sigma, 2 \*\sigma], \ $ then 
$ Y_{ll}(x), \ 1\leq l \leq m, \ \Re Y_{ij}(x), \ \Im Y_{ij}(x),\\
1\leq i < j \leq m, \ $ are  independent centered real Gaussian random 
variables with the variance given by
\begin{align}
\label{Disp11} 
& \V(Y_{ll}(x))= \kappa_4(\mu)\* g_{\sigma}^2(x) - \sigma^4\* g_{\sigma}'(x), \ 1\leq l \leq m, \\
\label{Disp12}
& \V(\Re Y_{ij}(x))= -\frac{1}{2}\*\sigma^4\* g_{\sigma}'(x), \ \ \V(\Im Y_{ij}(x))= -\frac{1}{2}\*\sigma^4\* g_{\sigma}'(x), \ 1\leq i <j \leq m, \\
\label{Disp13}
& \Cov(\Re Y_{ij}(x), \Im Y_{ij}(x))=0, \ 1\leq i <j \leq m.
\end{align}
\end{remark}

Let $ \mathcal{D}, $ as before, be a compact subset of $\C \setminus [-2\*\sigma, 2\*\sigma]. $
The distribution of $\Upsilon_N(z), z\in \mathcal{D}, $ 
defines a probability measure $ \mathcal{P_N} $ on $C(\mathcal{D},\C^{m(m+1)/2}). $ 

\begin{theorem}
\label{thm:resherm1}
Let $X_N=\frac{1}{\sqrt{N}} W_N$ be a random Hermitian Wigner matrix
(\ref{offdiagherm1}-\ref{diagherm1}) and the marginal distributions $\mu $ and $\mu_1$ satisfy the
Poincar\'e inequality (\ref{poin}).
The probability measure $ \mathcal{P_N} $ on $C(\mathcal{D},\C^{m(m+1)/2}) $ given by the random field 
$ \Upsilon_N(z) $ weakly converges to the distribution of the random field $\Upsilon (z)$ defined in Theorem \ref{thm:resherm}.
\end{theorem}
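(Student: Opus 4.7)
Since Theorem \ref{thm:resherm} already establishes convergence of the finite-dimensional distributions of $\Upsilon_N(z)$ to those of the Gaussian field $\Upsilon(z)$, the remaining task is to prove that $\{\mathcal{P}_N\}$ is tight in $C(\mathcal{D}, \C^{m(m+1)/2})$. The plan is identical in structure to the proof of Theorem \ref{thm:resreal1}; only the covariance bookkeeping differs between the real symmetric and Hermitian settings. First I would fix $\eta>0$ with $\mathrm{dist}(\mathcal{D},[-2\sigma,2\sigma])>2\eta$ and a slightly larger compact $\mathcal{D}^+\supset\mathcal{D}$ still separated from $[-2\sigma,2\sigma]$ by $\eta$. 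The finite fourth moment assumption gives $\|X_N\|\to 2\sigma$ almost surely (\cite{BYin}), so on events $\Omega_N$ of probability tending to $1$ the resolvent $R_N(z)$, and hence $\Upsilon_N(z)$, are entrywise analytic on a neighborhood of $\mathcal{D}^+$, with $\|R_N(z)\|\le 1/\eta$ uniformly.

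The driving input is a variance estimate coming from the Poincar\'e inequality. Using $\partial R_{ij}(z)/\partial (X_N)_{k\ell}=R_{ik}(z)R_{\ell j}(z)$, the squared Frobenius gradient satisfies $|\nabla R_{ij}(z)|^2=(R R^*)_{ii}(z)(R^* R)_{jj}(z)\le\|R_N(z)\|^4$. Tensorization of the Poincar\'e inequality for $\mu,\mu_1$, together with the scaling $X_N=W_N/\sqrt N$ that upgrades the constant from $\upsilon$ to $N\upsilon$, then yields
\begin{equation*}
\V (R_{ij}(z))\le \frac{\|R_N(z)\|^4}{N\upsilon},
\end{equation*}
which is $O(1/N)$ uniformly for $z\in\mathcal{D}^+$ on $\Omega_N$. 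Combined with the a priori estimate $|\E R_{ij}(z)-g_\sigma(z)\delta_{ij}|=O(1/N)$ from Proposition \ref{proposition:prop1}, this gives $\sup_{z\in\mathcal{D}^+}\E|\Upsilon_N(z)_{ij}|^2=O(1)$ and hence
\begin{equation*}
\E\int_{\mathcal{D}^+}\|\Upsilon_N(z)\|^2\,dA(z)\le C
\end{equation*}
uniformly in $N$, where $dA$ is planar Lebesgue measure.

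To transfer this $L^2$ bound to tightness of $\Upsilon_N$ as a random element of $C(\mathcal{D},\C^{m(m+1)/2})$, I would exploit the analyticity of $\Upsilon_N$ on $\Omega_N$. By the subharmonic mean-value inequality applied to $|\Upsilon_N(z)_{ij}|^2$, for any $z\in\mathcal{D}$ and any disc $D(z,r)\subset\mathcal{D}^+$,
\begin{equation*}
|\Upsilon_N(z)_{ij}|^2\le\frac{1}{\pi r^2}\int_{D(z,r)}|\Upsilon_N(w)_{ij}|^2\,dA(w),
\end{equation*}
so $\E\sup_{z\in\mathcal{D}}\|\Upsilon_N(z)\|^2=O(1)$. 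A further application of Cauchy's integral formula on a contour $\Gamma\subset\mathcal{D}^+\setminus\mathcal{D}$ bounds $\sup_{z\in\mathcal{D}}\|\Upsilon_N'(z)\|$ in $L^2$, which yields equicontinuity of $\{\Upsilon_N\}$ in probability. Combined with pointwise boundedness, this gives tightness via Arzel\`a--Ascoli and Prokhorov's theorem. Since any subsequential limit must have the finite-dimensional distributions of $\Upsilon$ identified in Theorem \ref{thm:resherm}, the whole sequence converges weakly to $\Upsilon$. The main obstacle, mirroring the real-symmetric case, is the combined handling of the pointwise-in-$z$ Poincar\'e variance bound and the analyticity of $R_N(z)$; once the $\|R_N(z)\|^4$ Lipschitz bound in Frobenius norm and the $N\upsilon$ scaling of the Poincar\'e constant are in place, the rest is a routine application of Cauchy's formula plus the subharmonic mean-value inequality.
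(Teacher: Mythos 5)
Your proposal correctly reduces the theorem to tightness of $\{\mathcal{P}_N\}$ on $C(\mathcal{D},\C^{m(m+1)/2})$, since the finite-dimensional convergence is supplied by Theorem \ref{thm:resherm}. However, contrary to your opening claim that the plan is ``identical in structure'' to the proof of Theorem \ref{thm:resreal1}, you are actually running a genuinely different tightness argument from the paper's. The paper passes to $h(X_N)R_N(z)$ (the smooth cutoff from (\ref{hhh})), then uses the \emph{exponential} Lipschitz concentration bound (\ref{BOLUKLON}) from the Poincar\'e inequality to obtain tail estimates like (\ref{sasa}) and (\ref{sosa}) for $\Upsilon_N(z)-\Upsilon_N(w)$ and $\Upsilon_N'(z)$, and controls the modulus of continuity by a dyadic chaining argument over the partition of $\mathcal{D}$. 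You instead use the Poincar\'e inequality only through its \emph{second-moment} consequence $\V R_{ij}(z)=O(1/N)$, push that to an $L^2$ bound on $\sup_{\mathcal{D}}\|\Upsilon_N\|$ via subharmonicity of $|\Upsilon_N|^2$, and then get an $L^2$ bound on $\sup_{\mathcal{D}}\|\Upsilon_N'\|$ from Cauchy's formula, with equicontinuity delivered by Chebyshev rather than by chaining. This is a legitimate and in some ways cleaner route: it exploits the analyticity of $z\mapsto R_N(z)$ more directly, needs only polynomial tail control, and avoids the explicit $\log N$-level chaining. It is slightly less quantitative than the paper's exponential-tail/chaining approach, which gives sharper modulus-of-continuity estimates, but for tightness this does not matter.

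There are two imprecisions worth tightening. First, when $\mathcal{D}$ meets $\R$, the Poincar\'e-derived variance bound $\V R_{ij}(z)\le \E|\nabla R_{ij}(z)|^2/(N\upsilon)$ does not directly give $O(1/N)$: $\|R_N(z)\|$ is a \emph{random} quantity that is only controlled on $\Omega_N$, and the Poincar\'e inequality bounds the full unconditional variance. You need to apply the inequality to the globally Lipschitz entry $(h\cdot f^{(z)})(X_N)_{ij}$ (i.e., the cutoff $h(X_N)R_N(z)$ as in the paper) rather than to $R_{ij}(z)$ itself, and then note that $h(X_N)R_N(z)=R_N(z)$ on $\Omega_N$. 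Second, the Cauchy-formula step requires a second nested enlargement: to control $\E\sup_{w\in\Gamma}|\Upsilon_N(w)|^2$ for a contour $\Gamma\subset\mathcal{D}^+$ encircling $\mathcal{D}$, you need the $L^2$-integral bound on a still larger neighborhood $\mathcal{D}^{++}\supset\mathcal{D}^+$ containing a uniform tube around $\Gamma$. Also, the Frobenius gradient computation in the Hermitian case should use the real parameters $\Re X_{k\ell},\Im X_{k\ell},X_{kk}$ (see the derivative formulas in the Appendix), which gives $|\nabla R_{ij}(z)|^2\le 2\,(RR^*)_{ii}(R^*R)_{jj}$ rather than the version you wrote; this costs only a constant factor, so the $O(\|R\|^4)$ bound stands. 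With these repairs the argument goes through.
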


Next theorem extends the results of Theorems \ref{thm:real} and \ref{thm:real1} to the Hermitian case.  We recall that 
$ \omega^2(f), \alpha(f), \beta(f),$ and $ d^2(f)$ have been defined in (\ref{omegasq}), (\ref{alphaf}), (\ref{betaf}), and (\ref{dsqf}).
\begin{theorem}
\label{thm:herm}
Let $X_N=\frac{1}{\sqrt{N}} W_N$ be a random Hermitian Wigner matrix 
(\ref{offdiagherm1}-\ref{diagherm1}).
Let $f:\R \to  \R $ be four times continuously differentiable on $[-2\*\sigma -\delta, 2\*\sigma +\delta] $ for some $\delta>0.\ $  
Then the following holds.

(i) For $ i =j, $
\begin{equation}
\label{trudno11}
\sqrt{N} \*\left( f(X_N)_{ii} - \E\left( (fh)(X_N)_{ii}\right)   \right) 
\end{equation}
converges in distribution to the sum of two independent  random variables \\
$ \frac{\alpha(f)}{\sigma}\* W_{ii} $ and $N(0, v_2^2(f)), $
where $h$ is an arbitrary $ C^{\infty}_c(\R) $ function satisfying (\ref{amerika}),
\begin{equation}
\label{vsqfherm}
v_2^2(f):= \omega^2(f) -\alpha^2(f) +\frac{\kappa_4(\mu)}{\sigma^4} \* \beta^2(f),
\end{equation}
and
\begin{equation*}
\kappa_4(\mu)= \E |(W_N)_{12}|^4- 2\*\E|(W_N)_{12}|^2=m_4-2\sigma^4.
\end{equation*}

If $f$ is seven times continuously differentiable on $[-2\sigma-\delta, 2\sigma+\delta], $ then one replace $\E\left( (fh)(X_N)_{ii}\right) $
in (\ref{trudno11}) by (\ref{c1f}).

(ii) For $ i \not=j, $
\begin{equation}
\label{trudno12}
\sqrt{N} \* \left( f(X_N)_{ij} - \E\left( (fh)(X_N)_{ij}\right)     \right) 
\end{equation}
converges in distribution to the sum of two independent  random variables \\
$ \frac{\alpha(f)}{\sigma} \*W_{ij} \ $ and complex Gaussian $N(0, d^2(f))$
with i.i.d real and imaginary parts $N(0, \frac{1}{2}\*d^2(f)).$
If $f$ is six times continuously differentiable on $[-2\sigma-\delta, 2\sigma+\delta], $ then one can replace $\E\left( (fh)(X_N)_{ij}\right)$
in (\ref{trudno12}) by $0.$

(iii) For any finite $m, $ the normalized matrix entries 
\begin{equation}
\label{mnogoh}
\sqrt{N} \*\left( f(X_N)_{ij} - \E ((fh)(X_N)_{ij})\right), \ 1\leq i \leq j \leq m, 
\end{equation}
are independent in the limit $N \to \infty. $ 
\end{theorem}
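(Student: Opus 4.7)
The plan is to mirror the real symmetric argument outlined in the introduction and carried out for Theorem \ref{thm:real}, changing only the coefficients that reflect the different symmetry class. First I would prove the statement for rational test functions of the form
\begin{equation*}
f(x)=\sum_{k=1}^{K} c_k\,\frac{1}{z_k-x},\qquad c_k\in\R,\ z_k\in\C\setminus\R,
\end{equation*}
by invoking Theorem \ref{thm:resherm}. For such $f$ we have $f(X_N)_{ij}=\sum_k c_k (R_N(z_k))_{ij}$, so the CLT for the joint resolvent-entry field gives
\begin{equation*}
\sqrt{N}\bigl(f(X_N)_{ij}-\E f(X_N)_{ij}\bigr)\ \Rightarrow\ \sum_{k=1}^K c_k\, g_\sigma^2(z_k)\bigl(W_{ij}^{(m)}+Y_{ij}(z_k)\bigr).
\end{equation*}
The coefficient of $W_{ij}$ equals $\alpha(f)/\sigma$ by the identity $\sigma^2 g_\sigma^2(z)=z g_\sigma(z)-1$ applied termwise, which gives $g_\sigma^2(z)=\tfrac{1}{\sigma^2}\int\tfrac{x}{z-x}\,d\mu_{sc}(x)$. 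The remaining Gaussian component has variance obtained by summing the covariance kernel of Theorem \ref{thm:resherm} against the weights $c_k g_\sigma^2(z_k)$. Using (\ref{padova1})--(\ref{padova4}), standard algebra reduces this double sum to the integral $\tfrac12\iint(f(x)-f(y))^2\,d\mu_{sc}(x)d\mu_{sc}(y)+\tfrac{\kappa_4}{\sigma^4}\beta^2(f)-\alpha^2(f)$ in the diagonal case, recovering $v_2^2(f)$; for the off-diagonal case the factor $1/2$ in (\ref{dispersii14})--(\ref{dispersii15}) together with the antisymmetric structure of (\ref{dispersii16}) yields a complex Gaussian with i.i.d.\ real and imaginary parts of variance $d^2(f)/2$.

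Second, I would extend from rational combinations to a $C^4$ function $f$ on $[-2\sigma-\delta,2\sigma+\delta]$ via the Helffer--Sj\"ostrand functional calculus developed in Section \ref{sec:expvarreg}. Because $\|X_N\|\to 2\sigma$ almost surely under our moment assumptions, I may replace $f$ by $fh$ for $h\in C_c^\infty(\R)$ satisfying (\ref{amerika}) without altering the limit. Writing
\begin{equation*}
(fh)(X_N)=\frac{1}{\pi}\int \bar\partial\widetilde{fh}(z)\,(z-X_N)^{-1}\,dx\,dy
\end{equation*}
with $\widetilde{fh}$ an almost-analytic extension of $fh$, I discretize the integral to obtain rational approximants $f_m$ for which Step~1 applies, and use the variance bound (\ref{chto3}) of Proposition \ref{proposition:prop2} to conclude
\begin{equation*}
N\cdot\V\bigl(f(X_N)_{ij}-f_m(X_N)_{ij}\bigr)\le \mathrm{Const}(\mu,\mu_1)\,\|f-f_m\|_{4,1}^2\ \longrightarrow\ 0
\end{equation*}
uniformly in $N$ as $m\to\infty$. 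Combining this with the convergence of expectations, the convergence of the Gaussian variances $v_2^2(f_m)\to v_2^2(f)$ and $d^2(f_m)\to d^2(f)$ (immediate from the integral representations of $\omega^2,\alpha,\beta$), and the Slutsky-type argument of Section \ref{sec:lipschitz}, one obtains the CLT for $f(X_N)_{ij}$.

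Third, the centering choice follows from Proposition \ref{proposition:prop2}: if $f\in C^7$ on a neighborhood of $[-2\sigma,2\sigma]$, part~(i) allows one to replace $\E[(fh)(X_N)_{ii}]$ by the semicircle integral (\ref{c1f}) with error $O(1/N)$, which is washed out by the $\sqrt{N}$ scaling; similarly part~(iii) for $f\in C^6$ allows centering off-diagonal entries by $0$. The joint independence claim (iii) of Theorem \ref{thm:herm} is inherited from the independence assertion for the $Y_{ij}(z)$ in Theorem \ref{thm:resherm}, which holds for all the rational approximants $f_m$ and is preserved under the $L^2$-convergence of the $m\to\infty$ limit, together with the independence of the distinct entries of $W^{(m)}$.

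The main obstacle I foresee is the bookkeeping in Step~1: one must verify by direct computation that the covariances (\ref{dispersii11})--(\ref{dispersii16}) assemble into exactly the quantity $v_2^2(f)$ (resp.\ $d^2(f)$ with the complex Gaussian structure) after the diagonal reduction to $w=z$ is undone by the rational-function weights, and in particular that the antisymmetry in (\ref{dispersii16}) is responsible for making the complex off-diagonal Gaussian \emph{isotropic} (no pseudo-covariance) rather than elliptic. Everything else is essentially a transcription of the real symmetric argument with the coefficient $2\sigma^4$ replaced by $\sigma^4$ on the diagonal and by $\sigma^4/2$ off-diagonal, and with $\kappa_4=m_4-3\sigma^4$ replaced by $\kappa_4=m_4-2\sigma^4$.
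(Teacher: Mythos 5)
Your proposal is correct and follows essentially the same approach as the paper: establish the limiting law for finite linear combinations of resolvents via Theorem \ref{thm:resherm}, transfer to general $C^4$ test functions using the variance bound (\ref{chto3}) of Proposition \ref{proposition:prop2}, and fix the centering constants via parts (i)--(iii) of that proposition. The only cosmetic difference is in the approximation step, where the paper invokes a Stone--Weierstrass density argument for functions of the form $\sum a_l h_l(x)(z_l-x)^{-1}$ in $C^4_c(\R)$ rather than discretizing the Helffer--Sj\"ostrand integral; both routes reduce the general case to the rational case, though your discretization variant would require a little extra care to verify convergence of the Riemann sums in the $\|\cdot\|_{4,1}$ norm (the factor $|\Im z|^{l}$ from the almost-analytic extension must compensate the blow-up of $\|(z-\cdot)^{-1}\|_{4,1}$ near the real axis). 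Your observation that the antisymmetry in (\ref{dispersii16}) forces vanishing pseudo-covariance, hence i.i.d.\ real and imaginary parts for the off-diagonal complex Gaussian, is exactly the right point to check, and the coefficient dictionary ($2\sigma^4\to\sigma^4$ on the diagonal, $\sigma^4\to\sigma^4/2$ off the diagonal, $\kappa_4=m_4-3\sigma^4\to m_4-2\sigma^4$) is correct.
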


\begin{theorem}
\label{thm:herm1}  
Let $X_N=\frac{1}{\sqrt{N}} W_N$ be a random Hermitian Wigner matrix 
(\ref{offdiagherm1}-\ref{diagherm1})
and the marginal distributions $\mu $ and $\mu_1$ satisfy the
Poincar\'e inequality (\ref{poin}).  Then the following holds

(i) If $f$ is a Lipschitz continuous function on $[-2\*\sigma -\delta, 2\*\sigma +\delta] $ that
satisfies the subexponential growth condition (\ref{exprost}),
then the results of (\ref{thm:herm}) hold with the centralizing constants $\E\left( (fh)(X_N)_{ij}\right) $ in (\ref{trudno11}) and (\ref{trudno12}).

Moreover,
\begin{align}
\label{sanjosesh3}
&  \P \left( |f(X_N)_{ij}- \E(f(X_N)_{ij})| \geq t \right) \\
& \leq  2\*K \* \exp\left( -\frac{\sqrt{\upsilon \*N} \* t}{\sqrt{2}\*|f|_{\mathcal{L},\delta}} \right) + (2\*K+ o(1))\* 
\exp\left(-\frac{\sqrt{\upsilon\*N}}{\sqrt{2}}\*\delta\right), \nonumber
\end{align}
where $K$ is as in (\ref{KK}), $\upsilon$ is the constant in the Poincar\'e inequality (\ref{poin}), and $|f|_{\mathcal{L},\delta}$ defined 
in (\ref{snova1}).

(ii) If $f\in C^7(\R)$ (correspondingly, $f\in C^6(\R), \ f\in C^{10}(\R))$ and $f$ satisfies the subexponential growth condition
(\ref{exprost}), then the estimate (\ref{chto1}) (correspondingly, (\ref{chto2}),(\ref{chto4}))  from Proposition \ref{proposition:prop2} holds.

(iii) If the marginal distributions $\mu $ and $\mu_1$ satisfy the
Poincar\'e inequality (\ref{poin}) and $f$ is a Lipschitz continuous function on $\R,$
then
\begin{align}
\label{sanjoses4}
&  \P \left( |f(X_N)_{ij}- \E(f(X_N)_{ij})| \geq t \right) \\
& \leq  2\*K \* \exp\left( -\frac{\sqrt{\upsilon \*N} \* t}{\sqrt{2}\*|f|_{\mathcal{L},\R}} \right), \nonumber
\end{align}
where $|f|_{\mathcal{L},\R}$ is defined 
in (\ref{snova}).
\end{theorem}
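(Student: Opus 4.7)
The plan is to adapt the proof of Theorem \ref{thm:real1} to the Hermitian setting; the strategy is essentially identical, with only bookkeeping changes that reflect the complex off-diagonal entries. Three parts are tied together by one observation (see \cite{CQT}): for any Lipschitz $f:\R\to\R$ the map sending the independent real parameters of $W_N$ (the real and imaginary parts of $W_{jk}$, $j<k$, and the diagonal entries $W_{ii}$) to the scalar $f(X_N)_{ij}\in\C$ is Lipschitz with constant of order $|f|_{\mathcal{L},\R}/\sqrt{N}$. The $\sqrt{2}$ appearing in (\ref{sanjoses4}) versus its real-symmetric counterpart (\ref{sanjosesh2}) is exactly the factor relating $W_{jk}$ to the normalized parameters $\sqrt{2}\Re W_{jk}$ and $\sqrt{2}\Im W_{jk}$ in (\ref{offdiagherm1}). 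Combined with the tensorization of the Poincar\'e inequality, this yields both the variance bound $\V(f(X_N)_{ij})\le C|f|_{\mathcal{L},\R}^2/(\upsilon N)$ and, via the standard Herbst bootstrap that upgrades Poincar\'e to exponential concentration of Lipschitz functionals (cf.\ \cite{AGZ}), the tail estimate (\ref{sanjoses4}) in Part (iii).

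For Part (i), I would first exploit the sub-exponential concentration of $\|X_N\|$ around $2\sigma$, another consequence of the Poincar\'e inequality applied to the $1$-Lipschitz functional $\|\cdot\|$, to obtain $\P(\|X_N\|\ge 2\sigma+\delta)\le K'\exp(-c\sqrt{N}\delta)$. On the complement of this event $f(X_N)_{ij}=(fh)(X_N)_{ij}$, and hence the tail bound (\ref{sanjosesh3}) splits into the concentration of $(fh)(X_N)_{ij}$ (obtained from Part (iii) applied to the globally Lipschitz function $fh$ with constant $|f|_{\mathcal{L},\delta}$) and the probability that the cutoff is violated. For the CLT statement itself the approach is approximation by resolvent linear combinations: choose $f_m$ of the form (\ref{ffff}) with $f_mh\to fh$ in the Lipschitz norm on $[-2\sigma-\delta,2\sigma+\delta]$. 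The variance bound applied to $(f-f_m)h$, together with the spectral tail estimate above, shows that $\sqrt{N}((f-f_m)(X_N)_{ij}-\E((f-f_m)(X_N)_{ij}))$ converges to zero in $L^2$ uniformly in $N$. The Hermitian resolvent CLT (Theorem \ref{thm:resherm}) applied to each $f_m$, combined with continuity of the functionals $\omega^2$, $\alpha$, $\beta$, $v_2^2$, and $d^2$ in the Lipschitz norm on the support, then yields the asserted limit, including the independence of the Gaussian component from $W_{ij}$ inherited from Theorem \ref{thm:resherm}.

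Part (ii) follows by applying Proposition \ref{proposition:prop2} to $fh_R$ for a smooth cutoff $h_R$ equal to $1$ on $[-R,R]$, and then letting $R\to\infty$; the growth condition (\ref{exprost}) combined with sub-exponential spectral tails (uniform in $N$) is enough to absorb $|\E(f(X_N)_{ij})-\E((fh_R)(X_N)_{ij})|$ into the claimed $O(N^{-1})$ or $O(N^{-3/2})$ error. The main obstacle is not conceptual but organizational: one must track four error sources in the right order — cutoff failure of $h$, rational approximation error, the variance of the discarded Lipschitz tail, and the bias of the centering constant — and verify that the cross-covariances (\ref{dispersii11})--(\ref{dispersii16}) depend continuously in the relevant norm on the test function. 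All of this is a straightforward, if somewhat tedious, translation of the real-symmetric argument from Sections \ref{sec:expvarreg}--\ref{sec:lipschitz}, using the Hermitian versions of the preparatory results already established.
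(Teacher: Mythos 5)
Your proposal follows the paper's intended strategy: the authors state that the Hermitian proofs are essentially identical to the real-symmetric ones, and your argument is a faithful translation of the proofs of Theorem \ref{thm:real1} and Proposition \ref{proposition:prop4}. The key ingredients you identify — the Lipschitz continuity of $X\mapsto f(X)_{ij}$ in Hilbert--Schmidt norm via \cite{CQT}, tensorization of Poincar\'e, concentration of $\|X_N\|$ to localize the spectrum, and reduction to rational test functions combined with variance control of the remainder — are exactly those in the paper. Your explanation of the $\sqrt{2}$ versus $2$ is also correct: the Hermitian normalization (\ref{offdiagherm1}) means that the independent parameters $\sqrt{2}\Re X_{jk},\ \sqrt{2}\Im X_{jk},\ X_{ii}$ (with respect to which the Hilbert--Schmidt norm is Euclidean) each inherit Poincar\'e constant $N\upsilon$, whereas in the real case $\sqrt{2}X_{jk}$ only has constant $N\upsilon/2$; this gains a factor $\sqrt{2}$ in the exponent.

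Two small remarks. First, in Part (i) you run the argument through the finite-dimensional resolvent CLT (Theorem \ref{thm:resherm}) and rational approximation, which is the pattern used for Theorem \ref{thm:real}; the paper's proof of Theorem \ref{thm:real1} instead first treats $f$ analytic via the contour integral $\frac{1}{2\pi i}\int_\gamma f(z)\Upsilon_N(z)\,dz$, invoking the \emph{functional} convergence of Theorem \ref{thm:resherm1}, and evaluates the limiting variances $v_2^2(f),\ d^2(f)$ by explicit contour integration of $g_\sigma^2,\ g_\sigma^3,\ \varphi(z,w)$. Either route reaches the same formulas, but the contour approach makes the identification with $\omega^2,\alpha,\beta$ cleaner and is what actually appears in the text. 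Second, in Part (ii) the phrase ``letting $R\to\infty$'' should be dropped: under the sub-exponential growth (\ref{exprost}), $\|fh_R\|_{C^7}$ grows with $R$, so letting $R\to\infty$ degrades the $O(1/N)$ bound. The correct move — and what the paper does — is to fix a single cutoff $h$ equal to $1$ on $[-2\sigma-\delta,2\sigma+\delta]$, apply Proposition \ref{proposition:prop2} to $fh$, and then control $|\E(f(X_N)_{ij})-\E((fh)(X_N)_{ij})|$ by the sub-exponential spectral tail bound (\ref{bol}) together with (\ref{exprost}), which is $O(e^{-c\sqrt N})\ll N^{-3/2}$. With that correction the argument is complete.
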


Most of the proofs will be given in the real symmetric case.  The proofs in the Hermitian are essentially the same.
\begin{remark}
Theorems \ref{thm:resreal},   \ref{thm:real}, \ref{thm:resherm},   \ref{thm:herm}  
can be extended to the case when the matrix entries $ (W_N)_{ij}, \ 1\leq i\leq j\leq N, $ are independent  but not identically distributed \cite{ORS}.
In the real symmetric case, one requires that the off-diagonal entries satisfy
\begin{align*}
& \E (W_N)_{ij}=0, \ \V((W_N)_{ij})=\sigma^2, \ 1\leq i<j\leq N, \ \sup_{N, i\not=j} \E((W_N)_{ij})^4=m_4<\infty, \nonumber \\
& m_4(i):=\lim_{N\to \infty} \* \frac{1}{N}\* \sum_{j: j\not=i} \* \E |(W_N)_{ij}|^4 \ \text{exists for} \ 1\leq i \leq m, \nonumber\\
& L_N(\epsilon) \to 0, \ \text{as} \ N\to \infty, \ \forall \epsilon>0, \ \text{where} \\
& L_N(\epsilon)= \frac{1}{N^2}\* \sum_{1\leq i<j\leq N} \E\left( |(W_N)_{ij}|^4 \*\chi (|(W_N)_{ij}|\geq \epsilon\*\sqrt{N})\right ),\\
& L_{i,N}(\epsilon) \to 0, \ \text{as} \ N\to \infty, \ \forall \epsilon>0, \ 1\leq i \leq m, \ \text{where}\\
& L_{i,N}(\epsilon)= \frac{1}{N}\* \sum_{j:j\not=i} \E\left( |(W_N)_{ij}|^4 \*\chi(|(W_N)_{ij}|\geq \epsilon\*N^{1/4})\right ),
\end{align*}
and the diagonal entries satisfy
\begin{align*}
& \E (W_N)_{ii}=0, \ \ sup_{i,N} \E |(W_N)_{ii}|^2 <\infty,\\
& l_N(\epsilon)\to 0, \ \text{as} \ N\to \infty, \ \forall \epsilon>0, \ \text{where} \\
& l_N(\epsilon)= \frac{1}{N}\* \sum_{1\leq i \leq N} \E\left( |(W_N)_{ii}|^2 \*\chi (|(W_N)_{ii}|\geq \epsilon\*\sqrt{N})\right ).
\end{align*}
In the Hermitian case, one requires that, in addition, $\Re (W_N)_{ij}$ is independent from $\Im (W_N)_{ij}, \ \ 1\leq i<j\leq N,$ and
\begin{equation*}
\V(\Re(W_N)_{ij})=\V(\Im(W_N)_{ij})=\frac{\sigma^2}{2}, \ 1\leq i<j\leq N.
\end{equation*}
\end{remark}


\section{ \bf{Mathematical Expectation and Variance of Resolvent Entries}}
\label{sec:expvar}
In this section, we estimate mathematical expectation and variance of resolvent entries $R_{ij}(z):=(R_N(z))_{ij}.$ 
Without loss of generality, we can restrict our attention to the real symmetric case. The proofs in the Hermitian case are very similar.
Usually, we will assume in our calculations that $\mu_1=\mu.$  The proofs in the case $\mu_1\not=\mu$  are very similar.
From time to time, we will point out the (small) changes in the proofs one needs to make if  $\mu_1\not=\mu.$

\begin{proposition}
\label{proposition:prop1}
Let $X_N=\frac{1}{\sqrt{N}} W_N$ be a random real symmetric (Hermitian) Wigner matrix defined in (\ref{offdiagreal1}-\ref{diagreal2})
((\ref{offdiagherm1}-\ref{diagherm1}))  and $R_N(z)=(z\*I_N-X_N)^{-1} $ where $z \in \C. $
We will denote by $P_l(x), \ l\geq 1, $ a polynomial of degree $l$ with fixed positive coefficients.

Then 
\begin{align}
\label{odinnadtsat100}
& \E \tr_N R_N = \E R_{ii}(z) =g_{\sigma}(z) + O \left( \frac{1}{|\Im z|^6 \*N}\right),  \\
& \text{uniformly on bounded subsets of} \  \C\setminus \R,  \nonumber \\
\label{odinnadtsat101}
& \E R_{ij}(z)=O \left( \frac{P_5(|\Im z|^{-1})}{N}\right), \  1\leq i\not=j \leq N,\  \text{uniformly on} \ \C\setminus \R, \\
\label{odinnadtsat102}
& \V R_{ij}(z) = O \left( \frac{P_6(|\Im z|^{-1})}{N}\right), \  1\leq i, j\leq N, \ \text{uniformly on} \ \C\setminus \R. 
\end{align}

In addition, if $\mu$ has finite fifth moment and $\mu_1$ has finite third moment, then
\begin{equation}
\label{odinnadtsat103}
\E R_{ij}(z)=O \left( \frac{P_9(|\Im z|^{-1})}{N^{3/2}}\right), \  1\leq i\not=j \leq N, \ \text{uniformly on} \ \C\setminus \R.
\end{equation}
\end{proposition}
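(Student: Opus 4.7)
The plan is to use the resolvent method based on Schur complements, exactly along the lines sketched in (\ref{ireland})--(\ref{ireland4}). All four bounds come from the same toolkit: (a) Cramer's rule expressing $R_{ii}$ and $R_{ij}$ in terms of the minor resolvent $G^{(i)}$ obtained by deleting row/column $i$ from $X_N$, (b) independence of $G^{(i)}$ from the $i$-th row of $W_N$, (c) a conditional variance estimate for quadratic forms $b^t G^{(i)} b$ of the type used for (\ref{sluchvel}), and (d) the interlacing bound $|\tr_N R_N - \tr_N G^{(i)}| \le (N|\Im z|)^{-1}$.

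For (\ref{odinnadtsat100}), I would write
\begin{equation*}
R_{ii} = \bigl(z - \sigma^2\,\E\tr G^{(i)} - \xi_i\bigr)^{-1}, \qquad \xi_i := \frac{W_{ii}}{\sqrt{N}} + \frac{1}{N}\bigl(b_i^t G^{(i)} b_i - \sigma^2 \E\Tr G^{(i)}\bigr).
\end{equation*}
Conditioning on $G^{(i)}$ and using independence gives $\E|\xi_i|^2 = O(|\Im z|^{-2}/N)$. Expanding $R_{ii}$ in powers of $\xi_i$, taking expectation, and using (d) to replace $\E\tr G^{(i)}$ by $\E\tr_N R_N$ modulo $O(1/N)$ yields a perturbed semicircle equation (\ref{semicircle}) for $\E\tr_N R_N$. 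Solving it and unwinding the substitution produces the claimed $g_\sigma(z) + O((|\Im z|^{-6}/N))$ bound; the sixth power of $|\Im z|^{-1}$ accounts for $\|R\| \le |\Im z|^{-1}$ entering three times (two resolvent factors in the second-order expansion plus one from $g_\sigma'$).

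For (\ref{odinnadtsat101}), the off-diagonal Schur identity gives
\begin{equation*}
R_{ij} = -R_{ii}\sum_{k\neq i} \frac{W_{ik}}{\sqrt{N}}\,(G^{(i)})_{kj'},
\end{equation*}
with a shifted index $j'$. Since $\E W_{ik} = 0$ and $G^{(i)}$ is independent of $\{W_{ik}\}_k$, replacing $R_{ii}$ by the deterministic $g_\sigma(z)$ gives zero in expectation. The surviving term comes from the fluctuation $R_{ii} - g_\sigma$, of order $O(1/\sqrt{N})$ in $L^2$; pairing it by Cauchy--Schwarz with the sum (also $O(1/\sqrt{N})$ in $L^2$ by the Frobenius norm of $G^{(i)}$) yields the $P_5(|\Im z|^{-1})/N$ bound. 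For (\ref{odinnadtsat103}) one iterates the expansion once more: substituting $R_{ii} = g_\sigma + (R_{ii}-g_\sigma)$ and using the sharper (\ref{odinnadtsat100}) together with the finite fifth (resp.\ third) moment hypothesis shows that the leading $1/N$-contribution vanishes, leaving a remainder of size $1/N^{3/2}$.

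For the variance bound (\ref{odinnadtsat102}) I would use a martingale-difference decomposition with respect to the filtration $\mathcal{F}_k$ generated by the first $k$ rows of $W_N$, writing $R_{ij} - \E R_{ij} = \sum_{k=1}^N (\E_k - \E_{k-1}) R_{ij}$. Each increment is controlled via a rank-two resolvent identity comparing $R_{ij}$ with the entry obtained after resampling row $k$; the difference is $O(1/\sqrt{N})$ in $L^2$ with at most three resolvent factors, so orthogonality of the martingale differences yields $\V R_{ij} = O(P_6(|\Im z|^{-1})/N)$. The main obstacle I anticipate is keeping the polynomial in $|\Im z|^{-1}$ at degree six throughout this variance computation: each resolvent identity costs a factor of $|\Im z|^{-1}$, so one must repeatedly exploit the independence of $b_i$ and $G^{(i)}$ (rather than naive operator-norm bounds) to prevent the $|\Im z|$-dependence from blowing up, while simultaneously treating the diagonal entries $W_{ii}$ --- which only have a second moment --- via a separate truncation-and-concentration argument so that the weaker hypothesis on $\mu_1$ does not spoil the rate.
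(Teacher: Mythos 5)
Your proposal takes a genuinely different route from the paper. The paper's proof uses the decoupling formula (cumulant expansion) (\ref{decouple}) applied to $\E(X_{ik}R_{kj})$ to derive master (loop) equations for $\E\tr_N R_N$, $\E R_{12}$, and $\E(R_{ij}(z)R_{ij}(\bar z))$, together with Shcherbina's bound $\V(\tr_N R_N)=O(|\Im z|^{-4}N^{-2})$; you instead propose Schur-complement (minor-resolvent) representations and, for the variance, a martingale-difference decomposition. For (\ref{odinnadtsat100}), (\ref{odinnadtsat101}), and (\ref{odinnadtsat103}) the Schur-complement route is a reasonable alternative (and is indeed the one sketched heuristically in Section \ref{sec:intro} of the paper), although the details of the polynomial degree in $|\Im z|^{-1}$ and of the exact cancellation structure in the iterated expansion for (\ref{odinnadtsat103}) are left vague.

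However, the variance argument for (\ref{odinnadtsat102}) has a concrete gap, and since your proof of (\ref{odinnadtsat101}) also relies on $\V(R_{ii})=O(1/N)$ via Cauchy--Schwarz, this breaks the whole chain. You claim each martingale increment $(\E_k-\E_{k-1})R_{ij}$ is $O(1/\sqrt N)$ in $L^2$, and that orthogonality then yields $\V R_{ij}=O(1/N)$. That arithmetic does not work: if $\E|(\E_k-\E_{k-1})R_{ij}|^2=O(1/N)$ uniformly in $k$, then $\V R_{ij}=\sum_{k=1}^N \E|(\E_k-\E_{k-1})R_{ij}|^2 = O(1)$, not $O(1/N)$. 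One would need each increment to be $O(1/N)$ in $L^2$, and the rank-two resolvent identity naturally produces terms like $R_{ik}(\delta^t\tilde R)_j$ whose size is $O(1/N)$ only if one already knows $|R_{ik}|=O(1/\sqrt N)$ for $k\not=i$ --- which is essentially the content of (\ref{odinnadtsat101}) and (\ref{odinnadtsat102}) themselves, so the argument as stated is circular. The paper sidesteps this by writing the master equation directly for $\E(R_{ij}(z)R_{ij}(\bar z))$, feeding in (\ref{odinnadtsat100}), (\ref{odinnadtsat101}), and the Shcherbina trace-variance bound (\ref{shcherb}); this produces the self-improving relation $(z-\sigma^2 g_N(z))\V(R_{ij})=O(P_5(|\Im z|^{-1})/N)$, which inverts cleanly. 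A Schur-complement proof of (\ref{odinnadtsat102}) is also possible --- by directly controlling the variance of the denominator $\frac{W_{ii}}{\sqrt N}+\frac1N b_i^t G^{(i)}b_i$ via quadratic-form concentration plus the trace-variance bound, and separately for off-diagonal entries via $R_{ij}=-R_{ii}\sum_k X_{ik}G^{(i)}_{kj}$ --- but it is not the bare martingale/resampling bound you wrote; you would need to make that structure explicit.
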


\begin{remark}
We refer the reader to \cite{EYY} (see e.g. Theorem 2.1 there) 
for the optimal (up to $\log N$ factors) estimates on the resolvent entries with the correct $|\Im z|^{-1}$ behavior. 
The authors in \cite{EYY} require that the marginal distributions are 
subexponential. For related results, we refer the reader to the survey \cite{E}.
\end{remark}

\begin{proof}
We use the resolvent identity (\ref{resident}) to write
\begin{equation}
\label{odin}
z\* \E R_{ij}(z)  = \delta_{ij}+\sum_{k} \*\E(X_{ik}\*R_{kj}).
\end{equation}
Applying the decoupling formula (\ref{decouple}) to the term $\E(X_{ik}\*R_{kj}(z))$ in (\ref{odin}), we obtain the equation
\begin{equation}
\label{dva}
z \* \E R_{ij}(z) = \delta_{ij} +\sigma^2 \* \E \left( R_{ij}(z)\*\tr_N R_N(z) \right) +
\frac{\sigma^2}{N} \E \left( (R_N(z)^2)_{ij}  \right) -\frac{2\*\sigma^2}{N}\*\left(\E(R_{ii}(z)\*R_{ij}(z))\right) +r_N,
\end{equation}
where $r_N$ contains the third cumulant term corresponding to $p=2$ in (\ref{decouple}) for $k\not=i$,
and the error terms due to the truncation of the decoupling formula (\ref{decouple}) at $p=2$ for $k\not=i$ and at $p=0$ for $k=i.$
We rewrite (\ref{dva}) as
\begin{align}
\label{dva100}
& z \* \E R_{ij}(z) = \delta_{ij} + \sigma^2 \*  \E R_{ij}(z) \* g_{\sigma}(z) + \sigma^2 \*\E R_{ij}(z) \* \left(\E \tr_N R_N(z)-g_{\sigma}(z)\right)\\
\label{dva101}
& + \sigma^2 \*\Cov (R_{ij}(z), \tr_N R_N(z)) + 
\frac{\sigma^2}{N} \E \left( (R_N(z)^2)_{ij} \right) -\frac{2\*\sigma^2}{N}\*\left(\E(R_{ii}(z)\*R_{ij}(z))\right)
+r_N.
\end{align}
We claim the following estimates uniformly on $\C\setminus \R.$
\begin{lemma}
\label{Lemma1}
\begin{align}
\label{2}
& \Cov(R_{ij}(z), \tr_N R_N(z))=O\left(\frac{P_3(|\Im z|^{-1})}{N}\right), \\
\label{3}
& \frac{\sigma^2}{N} \E \left( (R_N(z)^2)_{ij} \right) =O\left(\frac{P_2(|\Im z|^{-1})}{N}\right), \\
\label{4}
& r_N=O\left(\frac{P_4(|\Im z|^{-1})}{N}\right).
\end{align}

\end{lemma}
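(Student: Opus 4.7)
The three bounds are handled separately, with increasing technical difficulty, but all follow from standard resolvent calculus for Wigner matrices.

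Estimate (\ref{3}) is immediate from $\|R_N(z)\|\le|\Im z|^{-1}$: every entry of $R_N(z)^2$ is bounded by $\|R_N(z)^2\|\le|\Im z|^{-2}$, so $(\sigma^2/N)|\E(R_N(z)^2)_{ij}|\le\sigma^2/(N|\Im z|^2)$, of the claimed form.

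For (\ref{2}) I would apply Cauchy-Schwarz,
\[
|\Cov(R_{ij},\tr_N R_N)|\le\sqrt{\V(R_{ij})\,\V(\tr_N R_N)},
\]
bound $\V(R_{ij})\le|\Im z|^{-2}$ trivially from $|R_{ij}|\le\|R_N\|$, and use the classical estimate $\V(\Tr R_N)=O(P(|\Im z|^{-1}))$ (independent of $N$) for Wigner matrices with finite fourth moment, which can be obtained through a martingale decomposition $\Tr R_N-\E\Tr R_N=\sum_k\gamma_k$ along the filtration generated by the first $k$ rows/columns of $W_N$, combined with the Schur complement identity $\Tr R_N-\Tr R_N^{(k)}=(R_N^2)_{kk}/R_{kk}$, where $R_N^{(k)}$ is the resolvent of the $(N-1)\times(N-1)$ submatrix with row/column $k$ removed. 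This yields $\V(\tr_N R_N)=O(P(|\Im z|^{-1})/N^2)$, and Cauchy-Schwarz then gives (\ref{2}).

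The remainder $r_N$ in (\ref{4}) has two pieces. The first is formed by the truncation errors in the decoupling formula---at order $p=2$ for each $k\ne i$, and at order $p=0$ for $k=i$. These are controlled via the standard remainder bound $|\epsilon_p|\le C\,\E|X_{ik}|^{p+2}\,\sup|\partial_{ik}^{p+1}R_{kj}|$; using $\E|X_{ik}|^4=m_4/N^2$, $\E|X_{ii}|^2=\sigma_1^2/N$, and $\sup|\partial_{ik}^q R_{kj}|=O(|\Im z|^{-q-1})$ (obtained by iterating $\partial_{ik}R=R\,V\,R$ with $V=e_ie_k^T+e_ke_i^T$), summation over $k$ yields an $O(P_4(|\Im z|^{-1})/N)$ contribution.

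The second piece is the explicit third-cumulant term $\frac{\kappa_3(\mu)}{2N^{3/2}}\sum_{k\ne i}\E[\partial_{ik}^2 R_{kj}]$, which is the main obstacle: the naive estimate $|\partial_{ik}^2 R_{kj}|=O(|\Im z|^{-3})$ yields only $O(N^{-1/2})$ after summation. The improved decay must be extracted by computing the derivative explicitly, via $\partial_{ik}^2 R=2RVRVR$, to get
\[
(\partial_{ik}^2 R)_{kj}=2R_{ki}^2 R_{kj}+4R_{kk}R_{ki}R_{ij}+2R_{ii}R_{kk}R_{kj}.
\]
Using the identity $\sum_k|R_{k\ell}|^2=(RR^*)_{\ell\ell}\le|\Im z|^{-2}$ together with Cauchy-Schwarz, one obtains $|\sum_k R_{ki}^2 R_{kj}|\le|\Im z|^{-3}$ and $|\sum_k R_{kk}R_{k\ell}|\le\sqrt{N}/|\Im z|^2$. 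Multiplied by $\kappa_3/N^{3/2}$, these combine to the required $O(P_4(|\Im z|^{-1})/N)$, proving (\ref{4}).
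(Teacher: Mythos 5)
Your proof is correct and follows essentially the same route as the paper. The bound (\ref{3}) is an immediate consequence of $\|R_N(z)\|\le|\Im z|^{-1}$ in both cases; (\ref{2}) comes from Cauchy--Schwarz with the trivial bound $\V(R_{ij})\le|\Im z|^{-2}$ and the $O(|\Im z|^{-4}/N^2)$ bound on $\V(\tr_N R_N)$, which the paper simply cites from Shcherbina's work (Proposition 2 of \cite{Sh}) rather than re-deriving via the martingale--Schur argument you sketch; and (\ref{4}) is handled, as in the paper, by explicitly computing $\partial_{ik}^2 R_{kj}$ (your expression agrees with the paper's once one uses the symmetry $R_{ij}=R_{ji}$), bounding the resulting three subsums with $\sum_k|R_{k\ell}|^2\le|\Im z|^{-2}$ together with Cauchy--Schwarz, and controlling the truncation errors at $p=2$ ($k\ne i$) and $p=0$ ($k=i$) via the standard remainder estimate in the decoupling formula. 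Your accounting for the second and third pieces of the third-cumulant sum is slightly compressed (after pulling out the bounded factor $R_{ii}$ or $R_{ij}$ one gets $\sqrt{N}/|\Im z|^3$ before multiplying by $\kappa_3/N^{3/2}$), but the arithmetic comes out to $O(P_3(|\Im z|^{-1})/N)$ exactly as claimed, so no gap.
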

\begin{proof}
The bound (\ref{3}) immediately follows from (\ref{resbound}).
To obtain (\ref{2}), we again use (\ref{resbound}) and the estimate
\begin{equation}
\label{shcherb}
\V (\tr_N R_N(z))= O\left(\frac{1}{|\Im z|^{4}\*N^2}\right),
\end{equation}
from Proposition 2 of \cite{Sh}. It follows from the proof that the bound is valid provided the second
moments
of the diagonal entries are uniformly bounded and the fourth moments
of the off-diagonal entries are also uniformly bounded (\cite{Sh1}).

Now, we turn our attention to (\ref{4}).
First, we note that the term $\frac{2\*\sigma^2}{N}\*\left(\E(R_{ii}(z)\*R_{ij}(z))\right)$ in (\ref{dva101}) is $O\left(\frac{1}{|\Im z|^2 \* N} \right)$
which immediately follows from (\ref{resbound}).

The third cumulant term gives
\begin{align}
\label{tretii}
& \frac{\kappa_3}{2!\*N^{3/2}} [ 4\* \E (\sum_{k:k\not=i} R_{ij}\*R_{ik}\*R_{kk}) + 
2\*\E (\sum_{k:k\not=i} R_{ii}\*R_{kk}\*R_{kj}) \\
&+ 2\*\E (\sum_{k:k\not=i} (R_{ik})^2 \* R_{jk}) ].\nonumber
\end{align}
Since 
\begin{equation}
\label{solnce}
\sum_k |R_{ik}|^2 \leq \|R \|^2 \leq \frac{1}{|\Im z|^2}, \ \text{and} \ |R_{pq}|(z) \leq \frac{1}{|\Im z|},
\end{equation}
we conclude that the third cumulant term contributes $O\left(\frac{1}{N\*|\Im z|^3}\right) $ to $r_N$ in (\ref{dva101}).
In a similar way, the error term that appears due to the truncation of the decoupling formula (\ref{decouple}) at $p=2 $ is 
$O\left(\frac{1}{N\*|\Im z|^4}\right). $  Indeed, it can be written as a sum of $O(N)$ terms, where each term is bounded by
$O\left(\frac{\kappa_4}{N^2} \* |\Im z|^{-4}\right). \ $ 
Lemma \ref{Lemma1} is proven.
\end{proof}

{\it Proof of (\ref{odinnadtsat100})}

Now, we turn our attention to (\ref{odinnadtsat100}). For 
\begin{equation}
g_N(z):=\E \tr_N R= \E R_{11} \nonumber
\end{equation}
one can write the Master Equation as
\begin{equation}
\label{5}
z\* g_N(z)= 1+ \sigma^2 \*g_N^2(z) + \sigma^2\*\Cov (R_{11}(z), \tr_N R_N(z)) + \frac{\sigma^2}{N} \E \left( (R_N(z)^2)_{11} \right) 
+ r_N, 
\end{equation}
by applying (\ref{decouple}) to $\E(X_{1k}\*R_{k1}(z))$ and summing over $ 1\leq k \leq N. $
As before, $r_N$ contains the third cumulant term corresponding to $p=2$ in (\ref{decouple}) for $k\not=i$,
and the error due to the truncation of the decoupling formula (\ref{decouple}) at $p=2$ for $k\not=i$ and at $p=0$ for $k=i.$

By (\ref{2}) and (\ref{3}), we bound the third and the fourth terms on the r.h.s. of (\ref{5}) by
$O\left(\frac{P_3(|\Im z|^{-1})}{N}\right)$ and $O\left(\frac{P_2(|\Im z|^{-1})}{N}\right),$
respectively.
Thus, we obtain
\begin{equation}
\label{goodest}
z\* g_N(z)= 1+ \sigma^2 \*g_N^2(z) + O\left(\frac{P_4(|\Im z|^{-1})}{N}\right),
\end{equation}
uniformly in $z\in \C\setminus \R.$

We now show that the bound (\ref{goodest}) implies (\ref{odinnadtsat100})
uniformly in $z$ satisfying
\begin{equation}
\label{tiho17}
|z|\leq T, \ \ \text{and} \ \Im z \not=0,
\end{equation}
where $T$ is an arbitrary large fixed positive number. 
Our proof follows closely arguments from \cite{CDF} Proposition 4.2, \cite{CD} Section 3.4, 
and \cite{HT} Lemma 5.5, Proposition 5.6, and Theorem 5.7. 
Define 
\begin{equation}
\label{boloniya}
\mathcal{Q}_N=\{z: |z|<T+1, \ |\Im z|> L\*N^{-1/5} \},
\end{equation}
where $L>0$ will be chosen to be sufficiently large.
Then for $z \in \mathcal{Q}_N, $ the error term $ \frac{1}{N\*|\Im z|^4}= O(N^{-1/5}). \ $
Therefore
\begin{equation}
\label{goodest18}
z\* g_N(z)-\sigma^2 \*g_N^2(z)= 1+ O(N^{-1/5}),
\end{equation}
so 
\begin{equation}
\label{sparta}
|g_N(z)|\geq \delta_1>0 \ \text{on} \ \mathcal{Q}_N. 
\end{equation}
Let
\begin{equation*}
s_N(z)= \sigma^2\*g_N(z) +\frac{1}{g_N(z)}.
\end{equation*}
Then it follows from (\ref{goodest18}) and (\ref{sparta})
\begin{equation}
\label{novoeutro}
s_N(z)-z=\sigma^2\*g_N(z) +\frac{1}{g_N(z)} -z= O(\frac{1}{N\*|\Im z|^4})
\end{equation}
on $\mathcal{Q}_N.\ $ Since
\begin{equation*}
\sigma^2\*g_N(z) +\frac{1}{g_N(z)}=\sigma^2\*g_{\sigma}(s_N(z)) +\frac{1}{g_{\sigma}(s_N(z))},
\end{equation*}
we conclude that 
\begin{equation}
\label{marta}
g_N(z)=g_{\sigma}(s_N(z)),
\end{equation}
first for $|\Im z|> \sqrt{2}\*\sigma, $ and then for all $z \in \mathcal{Q}_N $ by the principle of uniqueness of analytic continuation.

Choosing $L$ in (\ref{boloniya}) sufficiently large, we have that 
\begin{equation*}
|\Im s_N(z)| \geq \frac{1}{2}\*|\Im z |
\end{equation*}
on $\mathcal{Q}_N.\ $  Since $|\frac{d\*g_{\sigma}(z)}{dz}|\leq \frac{1}{|\Im z|^2}, \ $ 
we conclude that (\ref{novoeutro}) and (\ref{marta}) imply (\ref{odinnadtsat100}) on $\mathcal{Q}_N.\ $

If $ |\Im z|\leq L\*N^{-1/5}, $ then $\frac{1}{N\*|\Im z|^5} \geq L^{-5}, \ $ and 
\begin{equation*}
|g_N(z)-g_{\sigma}(z)|\leq \frac{2}{|\Im z|}=O\left(\frac{1}{N\*|\Im z|^6}\right).
\end{equation*}
Therefore, the estimate (\ref{odinnadtsat100}) is proven.

{\it Proof of (\ref{odinnadtsat101})}

Now, we prove (\ref{odinnadtsat101}). It follows from (\ref{dva100}-\ref{dva101}) and Lemma \ref{Lemma1} that
\begin{equation}
\label{dvadvesti}
z\* \E R_{12} = \sigma^2\* g_N(z) \* \E R_{12}+
\frac{\sigma^2}{N} \E ((R_N(z))^2)_{12} + O\left( \frac{P_4(|\Im z|^{-1})}{N}\right).
\end{equation}
Therefore,
\begin{equation}
\label{basle}
\left(z-\sigma^2\*g_N(z)\right)\*\E R_{12}= O\left( \frac{P_4(|\Im z|^{-1})}{N}\right).
\end{equation}
It follows from (\ref{goodest}) that
\begin{equation}
\label{vasil}
g_N(z)\*(z-\sigma^2\*g_N(z))=1+ O\left( \frac{P_4(|\Im z|^{-1})}{N}\right).
\end{equation}
It follows from (\ref{basle}) and (\ref{vasil}) that
\begin{equation}
\label{alushta}
\left(1+ O\left( \frac{P_4(|\Im z|^{-1})}{N}\right) \right) \* \E R_{12}= O\left( \frac{P_4(|\Im z|^{-1})}{N}\right) \* g_N(z).
\end{equation}
Consider $\mathcal{O}_N= \{z: |\Im z|> L\*N^{-1/4} \},$
where the constant $L$ is chosen sufficiently large so that the 
$O\left( \frac{P_4(|\Im z|^{-1})}{N}\right)$ term on the l.h.s. of (\ref{alushta}) is at most $1/2$ in absolute value.
Since  $|g_N(z)|\leq \frac{1}{|\Im z|}, \ $  we obtain
\begin{equation}
\label{igkan}
|\E R_{12}| \leq \frac{1}{|\Im z|}\*  
O\left(\frac{P_4(|\Im z|^{-1})}{N}\right)=
O\left(\frac{P_5(|\Im z|^{-1})}{N}\right)
\end{equation}
for $z \in \mathcal{O}_N. \ $
On the other hand, if $|\Im z|\leq L\*N^{-1/4},$ then $ \frac{1}{N\*|\Im z|^4} \geq L^{-4}, $ and
\begin{equation}
\label{igkan1}
|\E R_{12}|\leq \frac{1}{|\Im z|}= O\left(\frac{1}{N\*|\Im z|^5}\right).
\end{equation}
Combining (\ref{igkan}) and (\ref{igkan1}),   we obtain (\ref{odinnadtsat101}).

{\it Proof of (\ref{odinnadtsat102})}

Now we proceed to prove the variance bound (\ref{odinnadtsat102}).
We apply the resolvent identity (\ref{resident}) to write
\begin{equation}
\label{odin101}
z\* \E \left(R_{ij}(z)\* R_{ij}(\bar{z})\right)  = 
\E R_{ij}(\bar{z})\*\delta_{ij} +\sum_{k} \*\E\left(X_{ik}\*R_{kj}\*R_{ij}(\bar{z})\right).
\end{equation}
Applying the decoupling formula (\ref{decouple}) to the term $\E\left(X_{ik}\*R_{kj}(z)\*R_{ij}(\bar{z})\right)$ in (\ref{odin101}), 
we obtain 
\begin{align}
\label{DVA}
& z \* \E \left(R_{ij}(z)\* R_{ij}(\bar{z}) \right) =\E R_{ij}(\bar{z})\*\delta_{ij} 
+\sigma^2 \* \E \left( R_{ij}(z)\*\tr_N R_N(z) \*R_{ij}(\bar{z})\right) \\
\label{DVA1}
& +\frac{\sigma^2}{N} \E \left( (R_N(z)^2)_{ij}\*R_{ij}(\bar{z})  \right) +\frac{\sigma^2}{N} \E\left(R_{ii}(\bar{z})\* (|R_N(z)|^2)_{jj} \right) \\
\label{DVA2}
& + \frac{\sigma^2}{N} \E\left( R_{ij}(\bar{z})\* (|R_N(z)|^2)_{ij}\right)+ r_N,
\end{align}
where as before $r_N$ contains the third cumulant term corresponding to $p=2$ in (\ref{decouple}) for $k\not=i$,
and the error terms due to the truncation of the decoupling formula (\ref{decouple}) at $p=2$ for $k\not=i$ and at $p=0$ for  $k=i.$
One can treat $r_N$ as before and obtain
\begin{equation}
\label{runran}
r_N= O\left(\frac{P_5(|\Im z|^{-1})}{N}\right),
\end{equation}
uniformly on $C\setminus \R.$
Indeed, the bound on the truncation error term at $p=2$ for $k\not=i$ follows from the fact that it can be written as a sum of $O(N)$ terms, where each 
term is bounded by
$ O\left(\frac{\kappa_4}{N^2}\* |\Im z|^{-5}\right). \ $ 
The truncation error term at $p=0$ for $k=i$ contains one term bounded by $ O\left(\frac{\kappa_2(\mu_1)}{N}\* |\Im z|^{-3}\right). \ $ 
The third cumulant term can be written as
\begin{equation}
\label{kalosha}
\frac{\kappa_3}{2!\*N^{3/2}}\* \E \left[ \sum_{k:k\not=i} \frac{\partial^2{R_{kj}(z)}}{\partial{X_{ik}^2}} \* R_{ij}(\bar{z})
+ 2\*\frac{\partial{R_{kj}(z)}}{\partial{X_{ik}}}\*\frac{\partial{R_{ij}(\bar{z})}}{\partial{X_{ik}}} + R_{kj}(z)\* 
\frac{\partial^2{R_{ij}(\bar{z})}}{\partial{X_{ik}^2}} \right].
\end{equation}
We will treat each of the three subsums in (\ref{kalosha}) separately.
The first one is equal to
\begin{align}
& \frac{\kappa_3}{2!\*N^{3/2}}\* \E [ \sum_{k:k\not=i} \frac{\partial^2{R_{kj}(z)}}{\partial{X_{ik}^2}} \* R_{ij}(\bar{z}) ] \nonumber \\
& =\frac{\kappa_3}{2!\*N^{3/2}}\* [ 4\* \E (\sum_{k:k\not=i} R_{ij}(z)\*R_{ik}(z)\*R_{kk}(z)\*R_{ij}(\bar{z}) ) + 
2\*\E (\sum_{k:k\not=i} R_{ii}(z)\*R_{kk}(z)\*R_{kj}(z)\*R_{ij}(\bar{z})) \nonumber \\
&+ 2\*\E (\sum_{k:k\not=i} (R_{ik}(z))^2 \* R_{jk}(z)\*R_{ij}(\bar{z})) ]  . \nonumber
\end{align}
The same arguments as after (\ref{tretii}) bound it by $O\left(\frac{1}{N \*|\Im z|^4}\right).$
The second subsum equals
\begin{align}
& \frac{\kappa_3}{N^{3/2}}\* \E \left[ \sum_{k:k\not=i} \* 
\frac{\partial{R_{kj}(z)}}{\partial{X_{ik}}}\*\frac{\partial{R_{ij}(\bar{z})}}{\partial{X_{ik}}}\right] \nonumber \\
& = \frac{\kappa_3}{N^{3/2}}\* \E \left[ \sum_{k:k\not=i} 
\left(R_{ij}(z)\*R_{kk}(z) + R_{ik}(z)\*R_{kj}(z)\right)\*\left(R_{ii}(\bar{z})\*R_{kj}(\bar{z}) +
R_{ji}(\bar{z})\*R_{ki}(\bar{z})\right) \right] \nonumber.
\end{align}
It follows from (\ref{solnce}) that the second subsum is $O(\frac{1}{N \*|\Im z|^4}).$
Finally, the third subsum equals
\begin{equation*}
\frac{\kappa_3}{2!\*N^{3/2}}\* \E \left[ \sum_{k:k\not=i} R_{kj}(z)\* \frac{\partial^2{R_{ij}(\bar{z})}}{\partial{X_{ik}^2}} \right]\leq 
\frac{\kappa_3}{2!\*N^{3/2}}\* O(|\Im z|^{-3}) \* \E \sum_{k} |R_{kj}(z)|,
\end{equation*}
so it is also $O\left(\frac{1}{N \*|\Im z|^4}\right).$

Using the bound (\ref{shcherb}) on the variance of $\tr_N R_N(z) $  and (\ref{odinnadtsat100}-\ref{odinnadtsat101}),
we estimate the last term in (\ref{DVA}) as
\begin{equation*}
\E \left( R_{ij}(z)\*\tr_N R_N(z) \*R_{ij}(\bar{z})\right)= g_N(z) \* \E \left( R_{ij}(z)\*R_{ij}(\bar{z})\right)\* 
+ O\left(\frac{1}{|\Im z|^4 \* N}\right),
\end{equation*}
where we recall that $g_N(z)= \E \tr_N R_N(z). $
Since the two terms in (\ref{DVA1}) and the first term in (\ref{DVA2}) are bounded by $O\left(\frac{1}{|\Im z|^3 \* N}\right), $
we conclude that
\begin{equation}
\label{eaton}
z \* \E \left(R_{ij}(z)\* R_{ij}(\bar{z}) \right)= \E R_{ij}(\bar{z})\* \delta_{ij}+\sigma^2 \*g_N(z) \*\E \left( R_{ij}(z)\*R_{ij}(\bar{z})\right)+
O\left(\frac{P_5(|\Im z|^{-1})}{N}\right)
\end{equation}
uniformly on $\C\setminus \R.$
We now rewrite (\ref{goodest}) and (\ref{dvadvesti})
as 
\begin{equation}
\label{goodest117}
z\* \E R_{ij}(z)= \delta_{ij}+ \sigma^2 \* g_N(z) \* E R_{ij}(z) + O\left(\frac{P_4(|\Im z|^{-1})}{N}\right).
\end{equation}
Multiplying both sides of (\ref{goodest117}) by $\E R_{ij}(\bar{z})$ and subtracting from (\ref{eaton}), we obtain
\begin{equation}
(z -\sigma^2\*g_N(z))\* V (R_{ij}(z))= O\left(\frac{P_5(|\Im z|^{-1})}{N}\right),
\end{equation}
uniformly on $\C\setminus \R. $
Repeating the arguments after (\ref{dvadvesti}), we conclude that
\begin{equation*}
\V R_{ij}(z) = O\left(\frac{P_6(|\Im z|^{-1})}{N}\right).
\end{equation*}
This is exactly (\ref{odinnadtsat102}).

{\it Proof of (\ref{odinnadtsat103})}

Now, we turn our attention to the proof of (\ref{odinnadtsat103}). 
Let us assume that $\mu$ has finite fifth moment, $\mu_1$ has finite third moment, and $i\not=j. $  Without loss of generality, we can assume 
$i=1$ and $j=2.$
We write the Master equation for $\E R_{12}(z)$ as
\begin{align}
\label{dvakur}
& z \* \E R_{12}(z) = \E \left( \sum_k X_{1k}\*R_{k2}(z) \right)= \sigma^2 \* \E \left( R_{12}(z)\*\tr_N R_N(z) \right) \\
\label{dvakura}
& + \frac{\sigma^2}{N} \E \left( (R_N(z)^2)_{12}  \right) +r_N, 
\end{align}
where we apply the decoupling formula (\ref{decouple}) to the term $\E(X_{1k}\*R_{2j}(z))$ and truncate it at $p=3$ for $k\not=1$ and at $p=1$
for $k=1.$
Thus, the $r_N$ term contains the third and fourth cumulant terms (corresponding to $p=2$ and $p=3$) for $k\not=1$ 
as well as the error terms due to the truncation of the decoupling 
formula at $p=3$ for $k\not=1$  and at $p=1$ for $k=1.$
We note that in order to truncate the decoupling formula at $p=3, $ we have to require that $\mu$ has finite fifth moment.

It follows from (\ref{odinnadtsat102}) and (\ref{shcherb}) that we can replace 
in (\ref{dvakur}) $\E \left( R_{12}(z)\*\tr_N R_N(z) \right)$  by $\E (R_{12}(z)) \* \E \tr_N R_N(z)$ up to the error
$ O\left(\frac{P_5(|\Im z|^{-1})}{N^{3/2}}\right).$
We bound the absolute values of the terms in
(\ref{dvakura}) by $O\left(\frac{P_8(|\Im z|^{-1})}{N^{3/2}}\right)$ (see Lemmas \ref{lemka1} and \ref{lemka2} below).
Combining these results, we obtain
\begin{equation}
\label{71}
z \* \E R_{12}(z) =\sigma^2\* g_N(z)\* \E R_{12}(z) + O\left(\frac{P_8(|\Im z|^{-1})}{N^{3/2}}\right).
\end{equation}
Repeating the arguments after (\ref{basle}), we obtain (\ref{odinnadtsat103}).


\begin{lemma}
\label{lemka1}
Let $\mu$ have finite fifth moment and $\mu_1$ has finite third moment.  Then
\begin{equation}
\label{72}
\E \left( (R_N(z)^2)_{12}  \right)= O\left(\frac{P_8(|\Im z|^{-1})}{N^{1/2}}\right),
\end{equation}
uniformly on $\C\setminus \R.$
\end{lemma}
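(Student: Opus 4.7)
The plan is to derive the lemma as a fairly direct consequence of the already-established bound \eqref{odinnadtsat101} together with analyticity. The key observation is
$$ \bigl(R_N(z)^2\bigr)_{12} \;=\; -\frac{d}{dz}\bigl(R_N(z)\bigr)_{12}, $$
which follows from $R_N(z)=(zI-X_N)^{-1}$ and $\tfrac{d}{dz}R_N=-R_N^2$. Since $|R_{12}(z)|\le |\Im z|^{-1}$ almost surely and $R_{12}(z)$ is analytic on $\C\setminus\R$, dominated convergence justifies differentiating under the expectation, so
$$ \E \bigl(R_N(z)^2\bigr)_{12} \;=\; -\frac{d}{dz}\, \E R_{12}(z). $$

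Next I would invoke \eqref{odinnadtsat101}, which gives $|\E R_{12}(w)|\le C\,P_5(|\Im w|^{-1})/N$ uniformly on $\C\setminus\R$, to bound this derivative by the Cauchy integral formula. For $z$ with $\Im z\ne 0$, the closed disk $\{w:|w-z|\le |\Im z|/2\}$ lies entirely in one open half-plane (whichever contains $z$), hence in the domain of analyticity of $\E R_{12}$, and on its boundary one has $|\Im w|\ge |\Im z|/2$. Because $P_5$ has nonnegative coefficients,
$$ \sup_{|w-z|=|\Im z|/2}|\E R_{12}(w)| \;\le\; \frac{C\,P_5(2|\Im z|^{-1})}{N} \;\le\; \frac{C'\,P_5(|\Im z|^{-1})}{N}. $$
Cauchy's estimate for the first derivative on this disk then yields
$$ \left|\frac{d}{dz}\E R_{12}(z)\right| \;\le\; \frac{2}{|\Im z|}\sup_{|w-z|=|\Im z|/2}|\E R_{12}(w)| \;=\; O\!\left(\frac{P_6(|\Im z|^{-1})}{N}\right), $$
since $|\Im z|^{-1}P_5(|\Im z|^{-1})$ is a polynomial of degree $6$ in $|\Im z|^{-1}$ with positive coefficients. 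This already gives $\E(R_N(z)^2)_{12}=O(P_6(|\Im z|^{-1})/N)$, which is in fact stronger than the bound $O(P_8(|\Im z|^{-1})/N^{1/2})$ asserted by the lemma.

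There is essentially no obstacle: the argument uses only analyticity, dominated convergence, and Cauchy's estimate, with the only technicality being the bookkeeping of the extra $|\Im z|^{-1}$ factor that raises the degree of the controlling polynomial by one. (An alternative self-contained route is to derive a Master Equation for $\E(R^2)_{12}$ by applying $zR^2=R+XR^2$ and the decoupling formula \eqref{decouple} to $\sum_k\E X_{1k}(R^2)_{k2}$, and then combine it with the variance bound \eqref{odinnadtsat102} and the lower bound $|z-\sigma^2 g_N(z)|\gtrsim 1$ on the relevant region in the same fashion as in the derivation of \eqref{odinnadtsat101}; this is the route that genuinely exploits the finite fifth moment assumption on $\mu$. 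The Cauchy approach, being shorter, is the one I would present.)
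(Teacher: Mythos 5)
Your proof is correct, and it takes a genuinely different and shorter route than the paper. The paper derives a Master Equation for $\E(R^2)_{12}$ from the resolvent identity $zR^2 = R + XR^2$ by applying the decoupling formula (\ref{decouple}) truncated at $p=3$ (this is where the finite fifth moment is used), then absorbs the error terms and divides through by $z-\sigma^2 g_N(z)$ as in the derivation of (\ref{odinnadtsat101}); this yields the stated bound $O(P_8(|\Im z|^{-1})/N^{1/2})$ only after some work on the cumulant terms and the covariance $\Cov(R_{12}, \tr_N R)$. Your observation that $(R_N(z)^2)_{12} = -\tfrac{d}{dz}R_{12}(z)$, together with the fact that for each realization $R_{12}(\cdot)$ is analytic on $\C\setminus\R$ and uniformly bounded by $|\Im z|^{-1}$ there (so $\E R_{12}(\cdot)$ is analytic and may be differentiated under the expectation, e.g. via Morera and Fubini on circles), reduces the whole lemma to Cauchy's estimate on a disk of radius $|\Im z|/2$, on which the already-proved uniform bound (\ref{odinnadtsat101}) controls the supremum. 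Your bookkeeping of the factor $2^5$ from $P_5(2t)\le 2^5 P_5(t)$ and of the extra $|\Im z|^{-1}$ from Cauchy's estimate is correct, and gives $O(P_6(|\Im z|^{-1})/N)$, which indeed implies the lemma. A noteworthy side effect of your route is that it does not use the finite fifth moment of $\mu$ at all — it only needs (\ref{odinnadtsat101}), which holds under the baseline fourth-moment assumption — whereas the paper's Master Equation proof consumes the extra moment in the decoupling truncation. Both routes are valid; yours buys brevity and a sharper rate (and weaker hypotheses), while the paper's is in the same style as the surrounding computations and is self-contained with respect to the decoupling machinery already in use throughout Section 3.
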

\begin{proof}
We write the Master equation and use (\ref{odinnadtsat101}) to obtain
\begin{align}
& z \* \E (R^2)_{12}= z\*\E \sum_k R_{1k}\*R_{k2}= \E R_{12} + \E \sum_l \sum_k X_{1l}\*R_{lk}\*R_{k2} \nonumber \\
&  =\frac{\sigma^2}{N} \* \E [\sum_{l,k} (R_{ll}\*R_{1k} + R_{l1}\*R_{lk})\*R_{k2}]
+ \frac{\sigma^2}{N} \* \E [ \sum_{l,k} R_{lk} \*(R_{k1}\*R_{l2} + R_{kl}\*R_{12})]  \nonumber \\ 
& + O\left(\frac{P_5(|\Im z|^{-1})}{N}\right)  +r_N, \nonumber
\end{align}
where $r_N$ contains the third and fourth cumulant terms and the error terms due to the truncation in the decoupling formula at $p=3$ for 
$l\not=1$ and at $p=1$ for $l=1.$
Thus, we have
\begin{align}
& z \* \E (R^2)_{12}= \sigma^2 \* \E [ (R^2)_{12}\* \tr_N R] + 2\* \frac{\sigma^2}{N} \E (R^3)_{12} + \frac{\sigma^2}{N}\* \E [R_{12} \* \sum_{l,k}
R_{lk}^2 ] + O\left(\frac{P_5(|\Im z|^{-1})}{N}\right)  +r_N \nonumber \\
& = \sigma^2\*g_N(z) \* \E (R^2)_{12} + \frac{\sigma^2}{N}\* \E [R_{12} \* \sum_{l,k}
R_{lk}^2 ] + O\left(\frac{P_5(|\Im z|^{-1})}{N}\right)  +r_N,\nonumber
\end{align}
where we used (\ref{2}).
We note that
\begin{equation*}
\frac{\sigma^2}{N}\* \E R_{12} \* \E (\sum_{l,k} R_{lk}^2) =O\left(\frac{P_5(|\Im z|^{-1})}{N^2}\right)
\* O\left(\frac{N}{|\Im z|^2}\right)= O\left(\frac{P_7(|\Im z|^{-1})}{N}\right),
\end{equation*}
and using (\ref{odinnadtsat102}), we have
\begin{equation*}
\frac{\sigma^2}{N}\*[\V R_{12}]^{1/2}\* [\V(\sum_{l,k} R_{lk}^2)]^{1/2}= 
O\left(\frac{P_3(|\Im z|^{-1})}{N^{3/2}}\right)
\* O\left(\frac{N}{|\Im z|^2}\right)
=O\left(\frac{P_5(|\Im z|^{-1})}{N^{1/2}}\right).
\end{equation*}
Thus, we arrive at
\begin{equation}
\label{75}
(z-\sigma^2\*g_N(z)) \* \E (R^2)_{12}=O\left(\frac{P_7(|\Im z|^{-1})}{N^{1/2}}\right) +r_N.
\end{equation}
Rather long but straightforward calculations bound $r_N$ in (\ref{75}) by $O\left(\frac{P_6(|\Im z|^{-1})}{N^{1/2}} \right).$
We leave the details to the reader.
Therefore, we have
\begin{equation}
\label{76}
(z-\sigma^2\*g_N(z)) \* \E (R^2)_{12}=O\left(\frac{P_7(|\Im z|^{-1})}{N^{1/2}}\right).
\end{equation}
Now consider, as  before,
$\mathcal{O}_N= \{z: |\Im z|> L\*N^{-1/4} \},$
where the constant $L$ is chosen sufficiently large.
It follows from (\ref{vasil}) that
\begin{equation*}
|\E (R^2)_{12}| \leq \frac{1}{|\Im z|}\* O\left(\frac{P_7(|\Im z|^{-1})}{N^{1/2}}\right)
=O\left(\frac{P_8(|\Im z|^{-1})}{N^{1/2}}\right),
\end{equation*}
for $z \in \mathcal{O}_N. \ $
On the other hand, if $|\Im z|\leq L\*N^{-1/4},$ then $ \frac{1}{N^{1/2}\*|\Im z|^2} \geq L^{-2}, $ and
\begin{equation}
|\E (R^2)_{12}|\leq \frac{1}{|\Im z|^2}= O\left(\frac{1}{N^{1/2}\*|\Im z|^4}\right),
\end{equation}
for $z \not\in \mathcal{O}_N. \ $
Lemma \ref{lemka1} is proven.
\end{proof}


\begin{lemma}
\label{lemka2}
The term $r_N$ on the r.h.s.
of (\ref{dvakura}) satisfies 
\begin{equation}
\label{73}
O\left(\frac{P_8(|\Im z|^{-1})}{N^{3/2}}\right).
\end{equation}
\end{lemma}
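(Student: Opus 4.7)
My plan is to break $r_N$ into three pieces and bound each separately: the third-cumulant piece $\mathcal T_3 := \frac{\kappa_3(\mu)}{2 N^{3/2}} \sum_{k\neq 1} \E \d^2 R_{k2}/\d X_{1k}^2$, the fourth-cumulant piece $\mathcal T_4 := \frac{\kappa_4(\mu)}{6 N^2} \sum_{k\neq 1} \E \d^3 R_{k2}/\d X_{1k}^3$, and the Taylor-type truncation remainders left over from stopping (\ref{decouple}) at $p=3$ for $k\neq 1$ and at $p=1$ for $k=1$. The order of attack will be: truncation remainders first, then $\mathcal T_4$, and finally the delicate $\mathcal T_3$.

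For the truncation remainders I will apply the standard Taylor estimate on (\ref{decouple}): each additional derivative in $X_{1k}$ multiplies a resolvent chain by one more factor bounded by $|\Im z|^{-1}$, so the $k\neq 1$ remainder is bounded by $\E|X_{1k}|^5/N^{5/2}$ times $O(|\Im z|^{-5})$, summed over $N$ indices $k$, and the $k=1$ remainder is bounded by $\E|X_{11}|^3/N^{3/2}$ times $O(|\Im z|^{-3})$. The hypotheses of finite fifth moment for $\mu$ and finite third moment for $\mu_1$ make both contributions $O(P(|\Im z|^{-1})/N^{3/2})$. For $\mathcal T_4$, I will expand $\d^3 R_{k2}/\d X_{1k}^3$ via the elementary identity $\d R_{ij}/\d X_{1k} = R_{i1}R_{kj}+R_{ik}R_{1j}$, obtaining a finite sum of products of four resolvent entries carrying exactly four $k$-indices among eight slots. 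Partitioning by how these four $k$'s fall across the four factors, I bound each $k$-sum using $\sum_k|R_{ak}|^2\le|\Im z|^{-2}$ and Cauchy--Schwarz. The generic patterns yield $O(|\Im z|^{-4})$; the one worrying pattern, where two factors are $R_{kk}$ and the other two are $R_{ab}$ with $a,b\in\{1,2\}$, is handled by extracting the off-diagonal mean bound $\E R_{12}=O(P_5/N)$ from (\ref{odinnadtsat101}) and controlling the residual covariance by Cauchy--Schwarz against (\ref{odinnadtsat102}), producing an extra $N^{-1/2}$ gain. After multiplication by $\kappa_4/N^2$, all patterns give $O(P/N^{3/2})$.

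The main obstacle is $\mathcal T_3$, where the prefactor $\kappa_3 N^{-3/2}$ forces the $k$-sum of expectations to be uniformly $O(1)$ in $N$. Direct differentiation yields $\d^2 R_{k2}/\d X_{1k}^2 = 2R_{k1}^2R_{k2}+2R_{kk}R_{11}R_{k2}+4R_{k1}R_{kk}R_{12}$. The first summand is controlled pointwise in $\omega$ by $|\Im z|^{-1}\sum_k|R_{k1}|^2\le|\Im z|^{-3}$. After summation, the other two summands reduce to $R_{11}\sum_k R_{kk}R_{k2}$ and $R_{12}\sum_k R_{k1}R_{kk}$, for which deterministic pointwise bounds only give $O(\sqrt N)$ and are insufficient. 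The improvement happens only at the expectation level: writing $\E(R_{kk}R_{k2}) = \E R_{kk}\cdot\E R_{k2} + \Cov(R_{kk},R_{k2})$, the first factor is $O(P_5/N)$ for $k\neq 2$ by (\ref{odinnadtsat101}), the covariance is $O(P_6/N)$ by Cauchy--Schwarz against (\ref{odinnadtsat102}), the $(N-1)$-term sum is therefore $O(P_6)$, independent of $N$, and the single $k=2$ term is trivially $O(|\Im z|^{-2})$. A second mean/covariance split pulls out the outer factor $R_{11}$, the remaining covariance being tamed by $\V R_{11}=O(P_6/N)$ together with the crude a priori variance bound $N/|\Im z|^4$ for $\sum_k R_{kk}R_{k2}$. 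The third summand is treated identically after swapping the roles of indices $1$ and $2$. Collecting everything yields $|r_N|=O(P_8(|\Im z|^{-1})/N^{3/2})$. The heart of the argument is precisely this interplay between the $O(P_5/N)$ mean bound on off-diagonal resolvent entries and the $O(P_6/N)$ variance bound, which simultaneously cures two potentially diverging $\sqrt N$-like contributions.
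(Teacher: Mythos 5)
Your decomposition of $r_N$ into a third-cumulant piece, a fourth-cumulant piece, and Taylor-truncation remainders (at $p=3$ for $k\neq 1$, $p=1$ for $k=1$), and the tools you invoke for each, are exactly the paper's: pointwise Hilbert--Schmidt bounds $\sum_k|R_{ak}|^2\leq|\Im z|^{-2}$ where they suffice, and mean/covariance splits leaning on $\E R_{12}=O(P_5(|\Im z|^{-1})/N)$ from (\ref{odinnadtsat101}) and $\V R_{ij}=O(P_6(|\Im z|^{-1})/N)$ from (\ref{odinnadtsat102}) for the two genuinely delicate terms (your $R_{11}\sum_k R_{kk}R_{k2}$ and the $\kappa_4$-pattern $\sum_k R_{11}(R_{kk})^2R_{12}$). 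The one claim to be careful with is that the remaining third-cumulant piece $R_{12}\sum_k R_{1k}R_{kk}$ is ``treated identically after swapping the roles of $1$ and $2$'': the outer factor is now off-diagonal, and if you apply your two-level scheme literally (per-$k$ split of $\E(R_{1k}R_{kk})$ giving $O(P_6)$, then pull out $\E R_{12}=O(P_5/N)$), the mean contribution is $O(P_{11}/N)$, which is not $O(P_8)$ uniformly on $\C\setminus\R$. The paper's route for that subsum is simpler and degree-safe: split off $R_{12}$ once, bound $|\E\sum_{k}R_{1k}R_{kk}|$ by the crude deterministic $\sqrt{N}/|\Im z|^2$, and pair it with $\E R_{12}=O(P_5/N)$ to get $O(P_7/\sqrt{N})$, plus a Cauchy--Schwarz covariance of $O(P_5)$. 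This is a bookkeeping issue rather than a conceptual one (and is harmless in the end since the lemma is deployed in the proof of (\ref{odinnadtsat103}) only on $|\Im z|>LN^{-1/4}$), but if you want the stated uniform $O(P_8/N^{3/2})$ bound you should adopt the paper's one-level split for that particular subsum.
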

\begin{proof}
First, we look at the third cumulant terms
\begin{equation}
\label{tretii17}
\frac{\kappa_3}{2!\*N^{3/2}} [ 4\* \E (\sum_{k\not=1} R_{12}\*R_{1k}\*R_{kk}) + 
2\*\E (\sum_{k\not=1} R_{11}\*R_{kk}\*R_{k2}) 
+ 2\*\E (\sum_{k\not=1} (R_{1k})^2 \* R_{2k})].
\end{equation}
To estimate the first subsum in (\ref{tretii17}), we write
\begin{align}
& | \E [R_{12} \* (\sum_{k\not=1} \*R_{1k}\*R_{kk})]- \E [R_{12}]\* \E [\sum_{k\not=1} \*R_{1k}\*R_{kk}] | \leq \left(\V(R_{12})\right)^{1/2} \* 
\left( \V( \sum_{k\not=1} \*R_{1k}\*R_{kk})\right)^{1/2} \nonumber \\
& \leq O\left(\frac{P_3(|\Im z|^{-1})}{N^{1/2}} \right) 
\* \frac{N^{1/2}}{|\Im z|^{2}} \leq 
O(P_5(|\Im z|^{-1})). \nonumber
\end{align}
Taking into account (\ref{odinnadtsat101}) and (\ref{solnce}), we have
\begin{equation*}
\E [R_{12}]\* \E [\sum_{k\not=1} \*R_{1k}\*R_{kk}]= O\left(\frac{1}{|\Im z|^5\*N}\right) \* \frac{N^{1/2}}{|\Im z|^2}= 
O\left(\frac{P_7(|\Im z|^{-1})}{N^{1/2}}\right).
\end{equation*}
Therefore, we can bound the first subsum in (\ref{tretii17}) by
$O\left(\frac{P_7(|\Im z|^{-1})}{N^{3/2}}\right).$
To bound the second subsum in (\ref{tretii17}), we note that
\begin{equation}
\label{80}
\sum_{k\not=1} (\E R_{11}\*R_{kk}) \* \E R_{k2} = O(P_7(|\Im z|^{-1}))
\end{equation}
by (\ref{odinnadtsat101}) and (\ref{solnce}).
To bound
\begin{equation}
\label{81}
\sum_{k\not=1} (\V(R_{11}\*R_{kk}))^{1/2}\* (\V R_{k2})^{1/2},
\end{equation}
we use (\ref{odinnadtsat102}) and 
\begin{align}
& \V(R_{11}\*R_{kk}) \leq \E \left(|[(R_{11}-g_N(z)) +g_N(z)] \* [(R_{kk}-g_N(z)) +g_N(z)] - g^2_N(z)|^2\right) \nonumber \\
& = \E |(R_{11}-g_N(z))\*(R_{kk}-g_N(z))+ g_N(z)\*(R_{kk}-g_N(z)) +  g_N(z)\*(R_{11}-g_N(z))|^2. \nonumber
\end{align}
Using (\ref{odinnadtsat102}), we see that 
\begin{equation}
\label{82}
\V(R_{11}\*R_{kk}) = O\left(\frac{P_8(|\Im z|^{-1})}{N}\right).
\end{equation}
The bounds (\ref{82}), (\ref{odinnadtsat102}), and (\ref{80}) then show that the second subsum in (\ref{tretii17}) is 
$O\left(\frac{P_7(|\Im z|^{-1})}{N^{3/2}}\right).$

Finally, we bound the third subsum in (\ref{tretii17}) by using the estimate
\begin{equation*}
|\sum_{k\not=1x} (R_{1k})^2 \* R_{2k}|\leq |\Im z|^{-3}.
\end{equation*}

Now, let us look at the fourth cumulant terms:
\begin{align}
\label{chetvert}
& \frac{\kappa_4}{3!\*N^2} \* [ 18\* \E (\sum_{k\not=1} R_{11}\*R_{1k}\*R_{kk}\*R_{k2}) +  
6 \*\E (\sum_{k\not=1} R_{11}\*(R_{kk})^2\*R_{12}) \\
\label{chetchet}
& + 18\*\E (\sum_{k\not=1} (R_{1k})^2\*R_{kk}\*R_{12})  + 6 \* \E (\sum_{k} (R_{1k})^3\*R_{k2}) ]. 
\end{align}
Clearly,
\begin{align}
& |\sum_{k\not=1} R_{11}\*R_{1k}\*R_{kk}\*R_{k2}|\leq |\Im z|^{-4}, \nonumber \\
& |\sum_{k\not=1} (R_{1k})^2\*R_{kk}\*R_{12}|\leq |\Im z|^{-4}, \ \text{and} \nonumber \\
& |\sum_{k\not=1} (R_{1k})^3\*R_{k2}| \leq |\Im z|^{-4}. \nonumber
\end{align}
To estimate the term
$ \frac{\kappa_4}{N^2}\* \E (\sum_{k\not=1} R_{11}\*(R_{kk})^2\*R_{12}),$ we note that by using (\ref{odinnadtsat101}-\ref{odinnadtsat102})
\begin{equation*}
\sum_{k\not=1} \E(R_{11}\*(R_{kk})^2)\* \E R_{12}= O(P_8(|\Im z|^{-1})).
\end{equation*}
We are left with estimating
\begin{equation*}
\sum_{k\not=1} \left(\V(R_{11}\*(R_{kk})^2)\right)^{1/2} \* \left(\V R_{12}\right)^{1/2}\leq N\* |\Im z|^{-3} \* \left(\V R_{12}\right)^{1/2}=
O(N^{1/2} \* P_6(|\Im z|^{-1}).
\end{equation*}
Combining the estimates of all fourth cumulant terms, we observe that the sums in (\ref{chetvert}-\ref{chetchet}) are bounded by 
$O\left(\frac{P_8(|\Im z|^{-6})}{N^{3/2}}\right).$

To bound the error term, we note that it contains $O(N)$ terms, such that each of them is at most $O\left(\frac{1}{|\Im z|^5\*N^{5/2}}\right).$
Thus, the error term is bounded by $O\left(\frac{1}{|\Im z|^5\*N^{3/2}}\right).$
Lemma \ref{lemka2} is proven.
\end{proof}
This finishes the proof of Proposition \ref{proposition:prop1}.
\end{proof}


\section{ \bf{Expectation and Variance of Matrix Entries Of Regular Functions of Wigner Matrices}}
\label{sec:expvarreg}
In this section, we estimate the mathematical expectation and the variance of matrix entries $f(X_N)_{ij}$ for regular test functions $f$. 
As before, without loss of generality, we can restrict our attention to the real symmetric case. The proofs in the Hermitian case are very similar.

To extend the results of Proposition \ref{proposition:prop1} to the case of more general test functions, we will exploit
the Helffer- Sj\"{o}strand functional calculus 
(see \cite{HS}, \cite{D}) that depends on the use of almost analytic extensions of functions due to H\"{o}rmander \cite{H1}, \cite{H2}.

We start by proving Proposition \ref{proposition:prop2}.
\begin{proof}
Let us first assume that $f$ has compact support
and prove (\ref{chto1}).
Using the Helffer-Sj\"{o}strand functional calculus 
(see \cite{HS}, \cite{D}), we can write for any self-adjoint operator $X$
\begin{equation}
 f(X)=-\frac{1}{\pi}\,\int_{\mathbb{C}}\frac{\partial \tilde{f}}{\partial \bar{z}}\, \frac{1}{z-X}\,dxdy 
\quad,\quad\frac{\partial \tilde{f}}{\partial \bar{z}} := \frac{1}{2}\Big(\frac{\partial \tilde{f}}
{\partial x}+i\frac{\partial \tilde{f}}{\partial y}\Big)
\label{formula-H/S}
 \end{equation}
 where:
 \begin{itemize}
\item[i)]
 $z=x+iy$ with $x,y \in \mathbb{R}$;
 \item[ii)] $\tilde{f}(z)$ is the extension of the function $f$ defined as follows
  \begin{equation}\label{a.a. -extension}
  \tilde{f}(z):=\Big(\,\sum_{n=0}^{l}\frac{f^{(n)}(x)(iy)^n}{n!}\,\Big)\sigma(y);
\end{equation}
 here $\sigma \in C^{\infty}(\mathbb{R})$ is a nonnegative function equal to $1$ for $|y|\leq 1/2$ and equal to zero for $|y|\geq 1$.
 \end{itemize}
It should be noted that the r.h.s. of (\ref{formula-H/S}) does not depend on the choice of $l$ and the cut-off function $\sigma(y) $ in 
(\ref{a.a. -extension})(see e.g. Theorem 2 in \cite{D}). 
For $X=X_N$,  (\ref{formula-H/S}) implies
 \begin{equation}
\label{integral1}
f(X_N)_{ii}=-\frac{1}{\pi}\,\int_{\mathbb{C}}\frac{\partial \tilde{f}}{\partial \bar{z}}  
R_{ii}(z) dxdy \end{equation}
It follows from  (\ref{odinnadtsat100}) that with $l=6$ in (\ref{a.a. -extension})
\begin{eqnarray}\label{zato}
& &\mathbb{E} f(X_N)_{ii}=-\mathbb{E} \frac{1}{\*\pi\* } \* \int_{\mathbb{C}} \frac{\partial \tilde{f}}{\partial \bar{z}}  \* 
R_{ii}(z) \*
dxdy\\
&= &
-\frac{1}{\pi}\, \*\,\int_{\mathbb{C}}\frac{\partial \tilde{f}}{\partial \bar{z}}\,   \* g_{\sigma}(z) \* dxdy
-\frac{1}{\pi}\, \,\int_{\mathbb{C}}\frac{\partial \tilde{f}}{\partial \bar{z}}\,  \* \epsilon_{ii}(z) \* dxdy \label{main equation}\\
&= &
\int f(x) d\mu_{sc}(x)
-\frac{1}{\pi}\, \,\int_{\mathbb{C}}\frac{\partial \tilde{f}}{\partial \bar{z}}\,  \* \epsilon_{ii}(z) \* dxdy\end{eqnarray}
where 
\begin{equation*}
| \epsilon_{ii}(z)|= | \E R_{ii}(z)- g_{\sigma}(z)| \leq C_1\left( \frac{1}{N}\,\frac{1}{|Im(z)|^6}\right), 
\end{equation*}
and $C_1$ depends on $\mu, \ \mu_1$ and $L,$ where $supp(f)\subset [-L, L].$

Using the definition of $\tilde{f}$ (see (\ref{a.a. -extension})) one can easily calculate 
\begin{eqnarray}
\frac{\partial \tilde{f}}{\partial \bar{z}}&=&\frac{1}{2}\Big(\frac{\partial \tilde{f}}{\partial x}+i\frac{\partial \tilde{f}}{\partial y}
\Big)\\
& =&\frac{1}{2} \Big(\,\sum_{n=0}^{6}\frac{f^{(n)}(x)(iy)^n}{n!}\,\Big)
i\frac{d\sigma}{dy}+\frac{1}{2}f^{(7)}(x)(iy)^6\frac{\sigma(y)}{6!}
\end{eqnarray}
and derive the crucial bound
\begin{equation}\label{estimate-derivative}
\Big|\frac{\partial \tilde{f}}{\partial \bar{z}}\Big| \leq const \* \|f\|_{C^7([-L, +L])}\*|y|^6,
\end{equation}
for $f \in C^7_c(\R)$ with $supp(f) \subset [-L, L]. $
Therefore, the second term on the r.h.s. of (\ref{main equation}) can be estimated as follows
\begin{eqnarray}
& &|\frac{1}{\pi}\, \,\int_{\mathbb{C}}\frac{\partial \tilde{f}}{\partial \bar{z}}\,  \* \epsilon_{ii}(z) \* dxdy| \\
&\leq  & \frac{1}{\pi}\, \,\int_{\mathbb{C}}|\frac{\partial \tilde{f}}{\partial \bar{z}}\,  \* \epsilon_{ii}(z) \*| dxdy\\
& \leq&C_1\*  const(L, \mu, \mu_1) \* \|f\|_{C^7([-L, +L])} \*\frac{1}{N} 
\int dx \chi_f(x) \int dy \chi_\sigma(y)
\end{eqnarray}
where $\chi_f$  and $\chi_{\sigma}$ are the characteristic functions of the support of $f$ and of $\sigma$ respectively.
This proves (\ref{chto1}) for $f \in C_c^7(\R). \ $ 

To prove (\ref{chto11}), one has 
to generalize the estimate (\ref{odinnadtsat100}) in Proposition \ref{proposition:prop1} to the whole complex plane.
We claim the following bound 
\begin{equation}
\label{30}
|\E R_{ii}(z)- g_{\sigma}(z)|=|\E \tr_N R_N - g_{\sigma}(z)| \leq  (|z|+M)\*\frac{P_7(|\Im z|^{-1})}{N},
\end{equation}
uniformly in $ z\in \C, \ \Im z \not=0, $ where $M$ is some positive constant.
The bound (\ref{30}) follows from (\ref{goodest}).  The proof is identical to the proof of similar bounds given in 
\cite{CDF} Proposition 4.2 and \cite{CD} Section 3.4.
Using the Helffer Sj\"{o}strand functional calculus as before, one proves (\ref{chto11}) provided $f$ has eight continuous derivatives and
$(|x|+1)\* \frac{d^lf}{dx^l}(x), \ 0\leq l \leq 8, $ are integrable on $\R. $

In the case of the off-diagonal entries,  one takes $l=5$ in (\ref{a.a. -extension}), so
\begin{equation}\label{a.a. -ext}
  \tilde{f}(z):=\Big(\,\sum_{n=0}^{5}\frac{f^{(n)}(x)(iy)^n}{n!}\,\Big)\sigma(y),
\end{equation}
and proceeds in a similar fashion.  The only significant difference is that one has to replace
the estimate (\ref{estimate-derivative}) by
\begin{equation}\label{estimate-derivative1}
\Big|\frac{\partial \tilde{f}}{\partial \bar{z}}(x+iy)\Big| \leq const \* \max\left(|\frac{d^lf}{dx^l}(x)|, \ 1\leq l \leq 6\right) \*|y|^5,
\end{equation}
which together with (\ref{odinnadtsat101}) implies (\ref{chto2}).  
To prove (\ref{chto4}), one takes $l=9$ in (\ref{a.a. -extension}) and uses 
(\ref{odinnadtsat103}).

To bound the variance, we write  
$ f(X_N)_{ij} $ using (\ref{integral1}) with
\begin{equation}\label{a.a. -exten}
  \tilde{f}(z):=\Big(\,\sum_{n=0}^{3}\frac{f^{(n)}(x)(iy)^n}{n!}\,\Big)\sigma(y).
\end{equation}
Then
\begin{eqnarray}
\frac{\partial \tilde{f}}{\partial \bar{z}}&=&\frac{1}{2}\Big(\frac{\partial \tilde{f}}{\partial x}+i\frac{\partial \tilde{f}}{\partial y}
\Big)\\
\label{19}
& =&\frac{1}{2} \Big(\,\sum_{n=0}^{3}\frac{f^{(n)}(x)(iy)^n}{n!}\,\Big)
i\frac{d\sigma}{dy}+\frac{1}{2}f^{(4)}(x)(iy)^3\frac{\sigma(y)}{3!}
\end{eqnarray}
and, in particular,
\begin{equation}\label{estimate-derivative3}
\Big|\frac{\partial \tilde{f}}{\partial \bar{z}} (x+iy)\Big|\leq  Const \* \max\left(|\frac{d^lf}{dx^l}(x)|, \ 1\leq l \leq 4\right) \*
|y|^3\quad.
\end{equation}

Now we are ready to bound $\V (f(X_N)_{ij}).$  We write
\begin{equation}
\label{varvara}
\V (f(X_N)_{ij})= \V \left( -\frac{1}{\pi}\,\int_{\mathbb{C}}\frac{\partial \tilde{f}}{\partial \bar{z}} R_{ij}(z) dxdy \right)=
\frac{1}{\pi^2} \,\int_{\mathbb{C}}\,\int_{\mathbb{C}} \frac{\partial \tilde{f}}{\partial \bar{z}} \*
\frac{\partial \tilde{f}}{\partial \bar{w}} \Cov (R_{ij}(z), R_{ij}(w)) \*dxdydudv,
\end{equation}
where $z=x+iy, \ w=u+iv.$
We then obtain the following upper bound from (\ref{odinnadtsat102})
\begin{align}
\label{17}
& \V (f(X_N)_{ij}) \leq \frac{1}{\pi^2}\,\int_{\mathbb{C}}\,\int_{\mathbb{C}}|\frac{\partial \tilde{f}}{\partial \bar{z}}|\* 
|\frac{\partial \tilde{f}}{\partial \bar{w}}| \* \sqrt{\V(R_{ij}(z))}\* \sqrt{\V(R_{ij}(w))}  dxdydudv \\
\label{18}
&  \leq \frac{Const}{N} \* \left(\int_{\mathbb{C}}\, |\frac{\partial \tilde{f}}{\partial \bar{z}}|\*  \frac{1}{|\Im z|^{3}}| dxdy \right)^2.
\end{align}
Plugging (\ref{a.a. -exten}) in (\ref{18}) and using (\ref{19}), we obtain (\ref{chto3}). 
Proposition \ref{proposition:prop2} is proven.
\end{proof}

If $\mu$ satisfies the Poincar\'e inequality (\ref{poin}), one can generalize the results of  
Proposition \ref{proposition:prop2}. Recall that we defined
$|f|_{\mathcal{L},\R} $ and $|f|_{\mathcal{L},\delta} $ in (\ref{snova}-\ref{snova1}).

\begin{proposition}
\label{proposition:prop4}
Let $X_N=\frac{1}{\sqrt{N}} W_N$ be a random real symmetric (Hermitian) 
Wigner matrix with the marginal distributions $\mu $ and $\mu_1$ of the matrix entries satisfying the
Poincar\'e inequality (\ref{poin}) and 
$f:\R \to  \R $ be a Lipschitz continuous function on 
$[-2\*\sigma -\delta, 2\*\sigma +\delta] $ for some $\delta>0.\ $  
Let us assume $f$ satisfy the subexponential growth condition (\ref{exprost}).
Then
\begin{align}
\label{kurykur}
&  \P \left( |f(X_N)_{ij}- \E(f(X_N)_{ij})| \geq t \right) \\
& \leq  2\*K \* \exp\left( -\frac{\sqrt{\upsilon \*N} \* t}{2 \*|f|_{\mathcal{L},\delta}} \right) + (2\*K+ o(1))\* 
\exp\left(-\frac{\sqrt{\upsilon\*N}\*\delta}{2}\right),
\nonumber
\end{align}
where $|f|_{\mathcal{L},\delta} $ is defined in (\ref{snova1}), $K$ is defined in (\ref{KK}), and
$\upsilon $ is the constant in the Poincar\'e inequality (\ref{poin}).
If, in addition, $f \in C^7(\R)$ for $i=j $ ( $ f\in C^6(\R)$ for $i\not=j $),
then
\begin{equation}
\label{47}
\E (f(X_N)_{ij})=\delta_{ij}\* \int_{-2\sigma}^{2\sigma} f(x) \* \frac{1}{2 \pi \sigma^2} \sqrt{ 4 \sigma^2 - x^2}  
\* dx + O\left(\frac{1}{N}\right).
\end{equation}

If the marginal distributions $\mu $ and $\mu_1$ satisfy the
Poincar\'e inequality (\ref{poin}) and $f$ is a Lipschitz continuous function on $\R,$
then
\begin{align}
\label{kurykur1}
&  \P \left( |f(X_N)_{ij}- \E(f(X_N)_{ij})| \geq t \right) \\
& \leq  2\*K \* \exp\left( -\frac{\sqrt{\upsilon \*N} \* t}{2\*|f|_{\mathcal{L}, \R}} \right), \nonumber
\end{align}
where $|f|_{\mathcal{L},\R} $ is defined in (\ref{snova}).
\end{proposition}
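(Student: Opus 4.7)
The plan is to deduce Proposition \ref{proposition:prop4} in three stages: first establish the global-Lipschitz concentration bound \eqref{kurykur1}, then derive the local version \eqref{kurykur} from it by a cutoff argument, and finally obtain the expectation estimate \eqref{47} by combining the cutoff with Proposition \ref{proposition:prop2}. The backbone is the following standard packaging: (a) for any Lipschitz $g:\R\to\R$, the map $X\mapsto g(X)_{ij}$ on real symmetric (or Hermitian) matrices is Lipschitz in the Hilbert--Schmidt metric with constant $|g|_{\mathcal{L},\R}$ (up to a harmless $\sqrt{2}$ from the symmetrization), see \cite{CQT}; (b) the Poincar\'e inequality tensorizes, so the joint law of the independent entries of $W_N$ satisfies \eqref{poin} with the same constant $\upsilon$, and by the $N^{-1/2}$ rescaling the law of the entries of $X_N$ satisfies \eqref{poin} with constant $N\upsilon$; (c) Aida--Stroock/Gromov--Milman concentration under a Poincar\'e inequality gives exactly the exponential tail with the constant $K$ of \eqref{KK}. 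Combining (a)--(c) with $g=f$ yields \eqref{kurykur1} directly.

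For \eqref{kurykur}, introduce a smooth cutoff $h\in C^\infty_c(\R)$ with $h\equiv 1$ on $[-2\sigma-\delta/2,2\sigma+\delta/2]$ and $\mathrm{supp}(h)\subset[-2\sigma-\delta,2\sigma+\delta]$, chosen so that $fh$ is globally Lipschitz on $\R$ with $|fh|_{\mathcal{L},\R}\le |f|_{\mathcal{L},\delta}(1+o(1))$. On the event $A_N=\{\|X_N\|\le 2\sigma+\delta\}$ one has $f(X_N)_{ij}=(fh)(X_N)_{ij}$ exactly. The key input is the tail bound
\[
\P(\|X_N\|>2\sigma+\delta)\le (2K+o(1))\,\exp\!\left(-\tfrac{\sqrt{\upsilon N}\,\delta}{2}\right),
\]
which follows by applying \eqref{kurykur1} to a Lipschitz approximation of $x\mapsto(|x|-2\sigma)_+$ together with the semicircle law for $\|X_N\|$. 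Writing
\[
f(X_N)_{ij}-\E f(X_N)_{ij}=\bigl[(fh)(X_N)_{ij}-\E(fh)(X_N)_{ij}\bigr]+\bigl[(f-fh)(X_N)_{ij}\bigr]+\bigl[\E(fh-f)(X_N)_{ij}\bigr],
\]
the middle bracket vanishes on $A_N$; the last bracket is $o(e^{-c\sqrt{N}})$ by Cauchy--Schwarz, the tail bound above, and the sub-exponential growth condition \eqref{exprost} (sub-exponential tails come for free from the Poincar\'e inequality, see \cite{GZ,AGZ}). Applying \eqref{kurykur1} to $fh$ with $t/2$ and absorbing the two small error contributions into the second term of \eqref{kurykur} gives the claim.

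Finally, for \eqref{47} I would again use the cutoff $fh\in C^7_c(\R)$ (resp.\ $C^6_c(\R)$ off-diagonal) and apply \eqref{chto1} (resp.\ \eqref{chto2}) of Proposition \ref{proposition:prop2} to $fh$, whose compactly supported derivatives are bounded by the $C^7$-norm of $f$ on $[-2\sigma-\delta,2\sigma+\delta]$. The difference $|\E f(X_N)_{ij}-\E(fh)(X_N)_{ij}|$ is bounded by $\E[|f(X_N)_{ij}|+|(fh)(X_N)_{ij}|;\,A_N^c]$, which by \eqref{exprost} and the tail bound on $\|X_N\|$ is $O(e^{-c\sqrt{N}})=O(1/N)$. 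Since Poincar\'e forces all moments of $\mu,\mu_1$ to be finite, the finite-fifth-moment hypothesis behind \eqref{chto4} is automatic, so the improved $C^{10}$ bound also transfers.

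The main obstacle is the operator-norm tail estimate $\P(\|X_N\|>2\sigma+\delta)\le Ce^{-c\sqrt{N}}$ with the precise constants appearing in \eqref{kurykur}; once this is in hand, everything else is a routine triangle-inequality bookkeeping. A secondary but delicate point is tracking the numerical constants (the factor $2$ in the denominator and the combinatorial constant $K$) through the Hilbert--Schmidt Lipschitz estimate for symmetric/Hermitian matrices and through the Gromov--Milman deviation inequality; the $\sqrt{2}$ discrepancy between the real-symmetric and Hermitian versions \eqref{sanjosesh2} and \eqref{sanjoses4} arises precisely at this step, from the $\sqrt{2}$ rescaling in the Hermitian entries \eqref{offdiagherm1}.
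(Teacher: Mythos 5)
Your three-stage structure (global Lipschitz concentration $\Rightarrow$ local bound via cutoff $\Rightarrow$ expectation estimate via Proposition~\ref{proposition:prop2}) and almost all the ingredients match the paper's proof. The Lipschitz-in-HS property from~\cite{CQT}, the tensorization and $N$-rescaling of the Poincar\'e constant (with the $\sqrt{2}$ bookkeeping for off-diagonal entries that produces the factor $2$ in the exponent), the general Poincar\'e$\Rightarrow$exponential-concentration inequality with the constant $K$ of~\eqref{KK}, and the use of a cutoff $h$ with $|hf|_{\mathcal{L},\R}=|f|_{\mathcal{L},\delta}$ are all exactly what the paper does.

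The one step where your plan is genuinely off is the operator-norm tail bound $\P(\|X_N\|>2\sigma+\delta)\leq (2K+o(1))\exp(-\sqrt{\upsilon N}\,\delta/2)$. You propose to obtain it by "applying~\eqref{kurykur1} to a Lipschitz approximation of $x\mapsto(|x|-2\sigma)_+$ together with the semicircle law." But~\eqref{kurykur1} is a concentration statement about a single matrix entry $g(X_N)_{ij}$, and $g(X_N)_{ij}$ for $g\approx(|x|-2\sigma)_+$ does not control $\|X_N\|$ in any direct way. The paper sidesteps this by applying the underlying concentration inequality~\eqref{BOLUKLON} \emph{not} to an entry of a matrix function but directly to the map $X\mapsto\|X\|$, which is $1$-Lipschitz in the Hilbert--Schmidt metric (since $\|A\|_{\mathrm{op}}\leq\|A\|_{\mathrm{HS}}$); combining this with $\E\|X_N\|\to 2\sigma$ (from Bai--Yin/edge universality) gives~\eqref{bol} and hence the tail bound. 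You did correctly flag this tail bound as "the main obstacle," but the specific route you suggest would not close it; the fix is to apply the Lipschitz concentration directly to $\|X_N\|$ rather than to a matrix entry of a cutoff function.

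A minor point for completeness: for~\eqref{47}, besides~\eqref{chto1}/\eqref{chto2} applied to $fh$, you should verify that $\|fh\|_{C^7}$ (resp.\ $C^6$) is finite, which is automatic once $h\in C_c^\infty$ is fixed, and your Cauchy--Schwarz argument for $\E[(f-fh)(X_N)_{ij}]$ does require a second-moment bound on $f(X_N)_{ij}$; that follows from~\eqref{exprost} together with the sub-exponential tails of $\|X_N\|$, exactly as you indicate.
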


\begin{proof}
Let us assume that $\mu $ and $\mu_1$ satisfy the
Poincar\'e inequality (\ref{poin}).  Suppose that $f(x)$ is a Lipschitz continuous function on $\R$ with the Lipschitz constant $|f|_{\mathcal{L}, \R}.$
Then
the matrix-valued function $f(X)$ on the space of $N\times N$ real symmetric (Hermitian) matrices is also Lipschitz continuous with respect to the
Hilbert-Schmidt norm (\cite{CQT}, Proposition 4.6, c)) with the same Lipschitz constant.  Namely,
\begin{equation}
\label{villa}
\|f(X)-f(Y)\|_{HS}\leq |f|_{\mathcal{L},\R} \* \|X-Y\|_{HS},
\end{equation}
where 
\begin{equation}
\|X-Y\|_{HS}=\left(\Tr (|X-Y|^2)\right)^{1/2}.
\end{equation}
(We note that even though (\ref{villa}) was proven in \cite{CQT} only for real symmetric matrices, the proof for Hermitian matrices
is essentially the same).  Therefore, $f(X)_{ij}$ is a Lipschitz continuous function of the matrix entries of $X$ with the same Lipschitz constant.
Since the Poincar\'e inequality tensorizes (\cite{GZ}, \cite{AGZ}), the joint distribution of the matrix entries 
$ \{ X_{ii}, \ 1\leq i \leq N, \ \sqrt{2}\*X_{jk}, \ 1\leq j<k\leq N \} $ of $X_N$ satisfies the Poincar\'e inequality
with the constant $\frac{1}{2}\*N\*\upsilon.$
Therefore, for any real-valued Lipschitz continuous function of the matrix entries with the Lipschitz constant
\begin{equation*}
|G|_{\mathcal{L}}:=sup_{X\not=Y} \frac{|G(X)-G(Y)|}{\|X-Y\|_{HS}},
\end{equation*}
one has (see e.g. \cite{AGZ}, Lemma 4.4.3 and Exercise 4.4.5)
\begin{equation}
\label{BOLUKLON}
\P\left(|G(X_N)-\E G(X_N)|\geq t \right) \leq 2\*K\* \exp\left(-\frac{\sqrt{\upsilon\*N}}{2\*|G|_{\mathcal{L}}}\*t\right),
\end{equation}
with
\begin{equation*}
K=-\sum_{i\geq 0} 2^i\*\log(1-2^{-1}\*4^{-i}).
\end{equation*}
Applying (\ref{BOLUKLON}) to $f(X_N)_{ij}$ one obtains (\ref{kurykur1}).

Now let us relax our assumptions on $f$ and consider
$f:\R \to  \R $ Lipschitz continuous function on 
$[-2\*\sigma -\delta, 2\*\sigma +\delta] $ for some $\delta>0\ $  that satisfies the
subexponential growth condition (\ref{exprost}).  Let $h(x)$ be a $C^{\infty}(\R)$ function with compact support that is identically one in the 
neighborhood of the support of the Wigner semicircle law, i.e.,
\begin{equation}
\label{hhh}
h(x)\equiv 1 \ \text{ for} \ x \in [-2\sigma -\delta, 2\sigma +\delta], \ h\in C_c^{\infty}(\R).
\end{equation}
For non-constant $f$, we can always choose $h$ in such a way that 
\begin{equation}
\label{amerika1}
|hf|_{\mathcal{L},\R}=|f|_{\mathcal{L},\delta}
\end{equation}
Note that 
\begin{equation}
\label{egalite}
f(X_N)_{ij}= (fh)(X_N)_{ij} \ \text{when} \ \|X_N\|\leq 2\*\sigma+\delta.
\end{equation}
It follows from the universality results on the distribution of 
the largest eigenvalues of $X_N$ (see \cite{J} and also \cite{EYY}, \cite{TV}, \cite{PS1}, \cite{PS2}, 
\cite{S}, \cite{TW1}, \cite{TW2}) that 
\begin{equation*}
\|X_N\| =2\*\sigma +O(N^{-1/2-1/100})
\end{equation*}
with probability going to 1.
Moreover, $G(X)=\|X\|$ is a Lipschitz continuous function of the matrix entries with Lipschitz constant one.  Thus,
(\ref{BOLUKLON}) implies
\begin{equation}
\label{bol}
\P\left(| \|X_N\|- 2\*\sigma|\geq t \right) \leq (2\*K +o(1))\* \exp\left(-\frac{\sqrt{\upsilon\*N}}{2}\*t\right).
\end{equation}
In particular,
\begin{equation}
\label{bolukl}
\P (\|X_N\|> 2\*\sigma +\delta) \leq (2\*K+o(1))\* \exp\left(-\frac{\sqrt{\upsilon\*N}\*\delta}{2}\right).
\end{equation}
Then the estimate (\ref{kurykur}) for $f(X_N)_{ij} $ follows from the estimate (\ref{kurykur1}) for $(hf)(X_N)_{ij}, $  (\ref{egalite}), and (\ref{bolukl}).
Finally, the estimate (\ref{47}) follows from (\ref{chto1}) and (\ref{chto2}) for $(hf)(X_N)_{ij}, $  (\ref{exprost}), and  (\ref{bol}).
\end{proof}
\begin{remark}
Let $f^{(z)}(x)=\frac{1}{z-x}, $ where $z \not\in [-2\sigma-\delta, 2\sigma+\delta]$  for some $\delta>0.$ Then
\begin{equation}
\label{slovar}
|f^{(z)}|_{\mathcal{L},\delta}= \frac{1}{dist(z, [-2\sigma-\delta, 2\sigma +\delta])^2}. 
\end{equation}
If $ z \not\in \R, $ one has
\begin{equation}
\label{slovar1}
|f^{(z)}|_{\mathcal{L},\R}= \frac{1}{|\Im z|^2}.
\end{equation}
In a similar fashion,
for $f^{z,w}(x)= \frac{1}{z-x}- \frac{1}{w-x}, \ z,w \not\in [-2\sigma-\delta, 2\sigma +\delta].$
one has
\begin{equation}
\label{slovar2}
|f^{(z,w)}|_{\mathcal{L},\delta} \leq 2\* \frac{|z-w|}{\min\left(dist(z, [-2\sigma-\delta, 2\sigma +\delta]),dist(w, [-2\sigma-\delta, 2\sigma +\delta])
\right)^3}. 
\end{equation}
For $ z,w \not\in \R, $ one has
\begin{equation}
\label{slovar3}
|f^{(z,w)}|_{\mathcal{L},\R} \leq 2\* \frac{|z-w|}{\min(|\Im z|, |\Im w|)^3}. 
\end{equation}
\end{remark}

\begin{remark}
Applying the Poincar\'e inequality to $R_{ij}(z)$ one can replace
the estimate (\ref{odinnadtsat102}) by
\begin{equation*}
\V(R_{ij}(z))=O\left(\frac{1}{|\Im z|^4\*N}\right).
\end{equation*}
\end{remark}


\section{\bf{Fluctuations of the Resolvent Entries}}
\label{sec:flucmat}

In this section, we prove Theorems \ref{thm:resreal}, \ref{thm:resreal1}, \ref{thm:resherm}, and \ref{thm:resherm1}.
We start with the proof of Theorem \ref{thm:resreal}.
\begin{proof}[Proof of Theorem  \ref{thm:resreal}]

As in Section \ref{sec:intro}, we denote by $X^{(m)}, \ W^{(m)},$ and $R^{(m)},$  the $m\times m$ upper-left corner submatrix 
of matrices $X_N, W_N,$ and $R_N, $ where $m$ is a fixed positive integer.  We denote by 
$\tilde{X}^{(N-m)}$ the $(N-m)\times(N-m) $ lower-right corner submatrix of $X_N, $ and by 
$$\tilde{R}(z)= \left(z\* I_{N-m} - \tilde{X}^{(N-m)}\right)^{-1}, \ $$
the resolvent of $\tilde{X}^{(N-m)}. \ $  We will often drop the dependence on $z$ in the notation of 
$\tilde{R}=\tilde{R}(z) $ if it does not lead to ambiguity. In addition, 
let us denote by $x^{(1)}, \ldots, x^{(m)} \in \R^{N-m} $ the vectors such that
the components of $x^{(i)}, \ 1\leq i \leq m, $  are given by the last $N-m$ entries of the $i$-th column of the matrix $X_N. \ $  Finally,
we will denote by $B$ the $(N-m) \times m \ $ submatrix of $X_N \ $ formed by the vectors (columns) $x^{(i)}, \ 1\leq i \leq m, \ $ and by $B^*$
its adjoint matrix.

Since the fourth moment of $\mu $ and the second moment of $\mu_1$ are finite, $\|X_N\|$ converges to $2\sigma$ almost surely (\cite{BYin}).
Thus, for fixed  $z \in \C \setminus [-2\sigma, 2\sigma], $ 
$\tilde{R}=(z\* I_{N-m} - \tilde{X}^{(N-m)})^{-1} $
exists with probability 1 for all but finitely many $N$ (obviously, $\tilde{R}$ always exists for $\Im z \not=0$).
Moreover, the $m\times m$ upper-left corner
of the resolvent matrix $R_N(z)= (z\*I_N-X_N)^{-1}, \ $ denoted by $R^{(m)}(z), \ $ can be written as
\begin{equation}
\label{ugol}
R^{(m)}(z) = \left (z\*I_m - X^{(m)} - B^*\* \tilde{R}\* B \right)^{-1}=\left (z\*I_m - \frac{1}{\sqrt{N}}\*W^{(m)} - B^*\* \tilde{R}\* B \right)^{-1} .
\end{equation}
Let us denote 
\begin{equation}
\label{dvaugla}
T:=z\*I_m - \frac{1}{\sqrt{N}}\*W^{(m)} - B^*\* \tilde{R}\* B, \ \text{so} \ R^{(m)}= T^{-1}.
\end{equation}
Write
\begin{equation}
\label{hapoel}
T= \left(z-\sigma^2\*g_{\sigma}(z) \right) \* I_m -\frac{1}{\sqrt{N}} \* \Gamma_N = 
\frac{1}{g_{\sigma}(z)}\* I_m -\frac{1}{\sqrt{N}} \* \Gamma_N(z) ,
\end{equation}
where 
\begin{equation}
\label{gammamatrix}
(\Gamma_N)_{ij}(z)=
\Gamma_{ij}(z)= W_{ij} + \sqrt{N} \* \left(\langle x^{(i)}, \tilde{R} \* x^{(j)} \rangle - \sigma^2\*g_{\sigma}(z)\*\delta_{ij} \right),
\ 1\leq i,j \leq m.
\end{equation}
We rewrite (\ref{gammamatrix}) as
\begin{equation}
\label{chetyreugla}
\Gamma_N(z)=W^{(m)} + Y_N(z),
\end{equation}
where the entries of the matrix $Y_N(z)$ are given by
\begin{equation}
\label{mnogouglov}
(Y_N(z))_{ij}= Y_{ij}(z)= \sqrt{N} \* \left(\langle x^{(i)}, \tilde{R}(z) \* x^{(j)} \rangle - 
\sigma^2\*g_{\sigma}(z)\*\delta_{ij} \right),\ 1\leq i,j \leq m.
\end{equation}
\begin{remark}
The Central Limit Theorem for random sesquilinear forms (see below) implies that the entries of $Y_N(z),$ and thus the entries of $\Gamma_N(z) $ as well, 
are bounded in probability.  Recall 
that a sequence $\{\xi_N\}_{N\geq 1} $ of $\R^M$-dimensional random vectors is bounded in probability if for any $\varepsilon>0 $ there 
exists $L(\varepsilon)$ that does not depend on $N$ such that $\P(|\xi_N|>L(\varepsilon))<\varepsilon $ for all $N\geq 1.$

Then,
\begin{equation}
\label{triugla}
\sqrt{N} \*\left(R^{(m)} - g_{\sigma}(z)\*I_m \right)= g_{\sigma}^2(z) \* \Gamma_N(z) + O\left(\frac{1}{\sqrt{N}}\right),
\end{equation}
in probability (i.e. the error term multiplied by $\sqrt{N}$ is bounded in probability). 
\end{remark}

Taking into account (\ref{gammamatrix}), (\ref{triugla}), (\ref{chetyreugla}), and (\ref{mnogouglov}), we can prove the weak convergence of the 
finite-dimensional distributions of
$\Upsilon_N(z), \ z \in \C \setminus [-2\*\sigma, 2\*\sigma],$ defined in (\ref{ups}), to the finite-dimensional distributions of 
$\Upsilon(z),\ $ defined in (\ref{functionalconv}) by proving the weak convergence of the finite-dimensional 
distributions of $Y_N(z) \ $ to those of $Y(z),  \ $ defined by (\ref{dispersii1}-\ref{dispersii6}). 

To this end, we 
use the Central Limit Theorem for random sesquilinear forms due to Bai and Yao \cite{BY} in the form given by Benaych-Georges, Guionnet, and Maida in 
Theorem 6.4 in \cite{BGM}.  For the convenience of the reader, we give the formulation of this theorem in the Appendix (Theorem \ref{thm:bggm}).

Let $p $ be a fixed positive integer and 
$z_1, \ldots, z_p \in \C \setminus [-2\*\sigma, 2\*\sigma]. $  To study the joint distribution of the entries
$ ((R_N)(z_l))_{i_l,j_l}, \ 1\leq l\leq p, \ 1\leq i_l\leq j_l\leq m, $ it is enough to study  the distribution of their linear combination.
Let $ a_{s,t}^{(i)}, \ b_{s,t}^{(i)},  \ 1\leq s \leq t \leq m, \ 1\leq i \leq p, \ $  be arbitrary real numbers and consider
\begin{equation}
\label{summamatric}
\mathcal{M}_N^{(s,t)}= \sum_{i=1}^p \left( a_{s,t}^{(i)}\* \Re( \tilde{R}(z_i)) + b_{s,t}^{(i)} \* \Im( \tilde{R}(z_i))\right), \ \ 1\leq s \leq t \leq m,
\end{equation}
where for any linear operator $A$
\begin{align*}
& \Re(A)=\frac{A+A^*}{2}, \\
& \Im(A)=\frac{A-A^*}{2\*i}.
\end{align*}
Now, we show that the results of Propositions \ref{proposition:prop1} and \ref{proposition:prop2} and the almost sure convergence of 
$\|X_N\|$ to $2\sigma $ imply that
the conditions (\ref{nupogodi1}, \ref{nupogodi100}) of Theorem \ref{thm:bggm} are satisfied.  
First, we note that as $N \to \infty, $
\begin{align}
\label{91}
& \tr_N \left(\Re( \tilde{R}(z)) \* \Re( \tilde{R}(w)) \right) \to  \varphi_{++}(z,w),\\
\label{92}
& \tr_N \left(\Im( \tilde{R}(z)) \* \Im( \tilde{R}(w)) \right) \to  \varphi_{--}(z,w),\\
\label{93}
& \tr_N \left(\Re( \tilde{R}(z) \*\Im( \tilde{R}(w)\right)  \to  \varphi_{+-}(z,w),  \\
\label{94}
& \frac{1}{N} \sum_{i=1}^N (\Re( \tilde{R}(z)))_{ii} \* (\Re( \tilde{R}(w)))_{ii} \to  \Re(g_{\sigma}(z)) \*\Re(g_{\sigma}(w)),  \\
\label{95}
& \frac{1}{N} \sum_{i=1}^N (\Im( \tilde{R}(z)))_{ii} \* (\Im( \tilde{R}(z)))_{ii} \to \Im(g_{\sigma}(z)) \* \Im(g_{\sigma}(w)), \\
\label{96}
& \frac{1}{N} \sum_{i=1}^N (\Re( \tilde{R}(z)))_{ii}\*(\Im( \tilde{R}(w)))_{ii} \to \Re(g_{\sigma}(z))\*\Im(g_{\sigma}(w)),
\end{align}
for $z,w \in \C \setminus [-2\sigma, 2\sigma], $ 
where $\varphi_{++}(z,w), \varphi_{--}(z,w),$ and $\varphi_{+-}(z,w)$ are defined in (\ref{padova2}-\ref{padova4}), and
the convergence is in probability.

Indeed,  (\ref{91}-\ref{93}) follow from the semicircle law (and the convergence can be taken to be almost sure). 
In particular, for real $z$ and $w,$ in order to avoid singularities, one can replace $\tilde{R}(z), \ \tilde{R}(w)$ by
$h(X_N)\*\tilde{R}(z), \ h(X_N)\*\tilde{R}(w), $ where $h$ is defined in (\ref{hhh}), and use the fact that
$ \|X_N\|\to 2\*\sigma $ almost surely as $N \to \infty.$

Let us now prove (\ref{94}).  The proofs of (\ref{95}-\ref{96}) are
similar.  We can assume that $\Im z\not=0, \ \Im w\not=0.\ $ Otherwise, one has to replace $\tilde{R}(z)$ by $h(X_N)\*\tilde{R}(z).$
We write
\begin{align}
& |(\Re( \tilde{R}(z)))_{ii} \* (\Re( \tilde{R}(w)))_{ii}- \Re(g_{\sigma}(z)) \*\Re(g_{\sigma}(w))| \nonumber \\
& \leq |(\Re( \tilde{R}(z)))_{ii}-\Re(g_{\sigma}(z))|\*
|(\Re( \tilde{R}(w)))_{ii}| + |\Re(g_{\sigma}(z))|\* |(\Re( \tilde{R}(w)))_{ii}- \Re(g_{\sigma}(w))| \nonumber \\
& \leq |(\Re( \tilde{R}(z)))_{ii}-\Re(g_{\sigma}(z))|\* \frac{1}{|\Im w|}+ \frac{1}{|\Im z|}\*|(\Re( \tilde{R}(w)))_{ii}- \Re(g_{\sigma}(w))|. \nonumber
\end{align}
Thus, it follows from (\ref{odinnadtsat102}) that
\begin{equation*}
\E |(\Re( \tilde{R}(z)))_{ii} \* (\Re( \tilde{R}(w)))_{ii}- \Re(g_{\sigma}(z)) \*\Re(g_{\sigma}(w))|\leq \left(\frac{1}{|\Im w|} +\frac{1}{|\Im z|}\right)
\* O(N^{-1/2}),
\end{equation*}
which implies (\ref{94}).

It should be noted that Theorem \ref{thm:bggm} is proven for non-random matrices $\mathcal{M}_N^{(s,t)}, \ 1\leq s \leq t \leq m. $ 
Since the convergence in probability does not imply almost sure convergence, an additional argument is in order.  Let $\mathcal{M}_N^{(s,t)}$ be 
defined as in (\ref{summamatric}), and $u^{(i)}= x^{(i)}, \ 1\leq i \leq m. $  By calculating the second moments of
$$ \left( \sqrt{N}\* \left(\langle x^{(p)}, \mathcal{M}_N^{(p,q)}\*x^{(q)} \rangle - \delta_{pq} \* 
\tr_N(\mathcal{M}_N^{(p,p)}) \right) \right), \ \ 1\leq p,q \leq m, $$
one can show that these random variables are bounded in probability.  Therefore, it is enough to prove convergence for a subsequence $N_n\to\infty.$
Since convergence in probability implies almost sure convergence for a subsequence, we can now apply Theorem \ref{thm:bggm}  directly to a subsequence.

Applying Theorem \ref{thm:bggm}, we establish the convergence of the finite-dimensional distributions of
$Y_N(z) \ $ and obtain (\ref{dispersii1}-\ref{dispersii6}).  
Theorem \ref{thm:resreal} is proven.
\end{proof}

The proof of Theorem \ref{thm:resherm} is very similar to the proof of Theorem \ref{thm:resreal}, Theorem \ref{thm:bggm}   
plays the central role in our arguments again.
We choose 
\begin{equation}
\label{summamatric1}
\mathcal{M}_N^{(s,t)}= \sum_{i=1}^p \left( a_{s,t}^{(i)}\* \Re( \tilde{R}(z_i)) + b_{s,t}^{(i)} \* \Im( \tilde{R}(z_i))\right), \ \ 1\leq s \leq t 
\leq m,
\end{equation}
where $ a_{s,t}^{(i)}, \ b_{s,t}^{(i)},  \ 1\leq s < t \leq m, \ 1\leq i \leq p, \ $  are arbitrary complex numbers and 
$a_{s,s}^{(i)}, \ b_{s,s}^{(i)},  \ 1\leq s \leq m, \ 1\leq i \leq p \ $ are arbitrary real numbers.  Applying Theorem 6.1, we establish the convergence 
of the finite-dimensional distributions of $Y_N(z)$ and obtain (\ref{dispersii11}-\ref{dispersii16}).

Now, we prove Theorem \ref{thm:resreal1}.
\begin{proof}[Proof of Theorem  \ref{thm:resreal1}]
Let $\mu$ and $\mu_1$ satisfy the Poincar\'e inequality (\ref{poin}).
To prove the functional limit theorem, i.e. the convergence in distribution of the sequence of 
probability measures $\mathcal{P}_N $ on $C(\mathcal{D},\C^{m(m+1)/2}), $  it is now sufficient to prove that the sequence $\mathcal{P}_N $ is tight
(\cite{Bil}).  For this, we need to show that for
\begin{equation*}
\Upsilon_N(z)=\sqrt{N} \* \left(R^{(m)}(z)-g_{\sigma}(z)\*I_m \right), \ z \in \C \setminus [-2\*\sigma, 2\*\sigma].
\end{equation*}
the following conditions are satisfied:

(a) for some fixed $z_0 \in \mathcal{D}, \ $ for each $\varepsilon >0, $ there exist sufficiently large $K$ and $N_0$ such that
\begin{equation}
\label{pervoeusl}
\P(\|\Upsilon_N(z_0)\|\geq K)\leq \epsilon \ \ \text{for all} \ N\geq N_0,
\end{equation}
and

(b) for each $\varepsilon >0 $ and $ \alpha>0 $ there exist $\gamma>0$ and $N_0$ such that
\begin{equation}
\label{vtoroeusl}
\P ( \omega_{\Upsilon_N}(\gamma) \geq \alpha) \leq \varepsilon \ \ \text{for all} \ N\geq N_0,
\end{equation}
where  $\omega_{\Upsilon_N}(\gamma)$ denotes the modulus of continuity of $Y_N(z)$ on $\mathcal{D}, \ $ namely
\begin{equation}
\omega_{\Upsilon_N}(\gamma)= sup_{|z-w| \leq \gamma} \|\Upsilon_N(z)-\Upsilon_N(w)\|,
\end{equation}
where the supremum is taken over all $z,w \in \mathcal{D} \ $ such that $|z-w| \leq \gamma.$

The property (a) immediately follows from the definition of $\Upsilon_N$ and the bounds (\ref{odinnadtsat100}-\ref{odinnadtsat102}) in 
Proposition \ref{proposition:prop1}.
To prove (b), we replace $R_N(z)$ by $h(X_N)\*R_N(z)$ in the definition of $\Upsilon_N(z)$ (if $\mathcal{D} \cap \R=\emptyset,$
this procedure is not needed), where $h$ is defined in  (\ref{hhh}) in such a way that $supp(h)\cap \mathcal{D}=\emptyset.$
We note that $R_N(z)=h(X_N)\*R_N(z)$ almost surely for all $z$ and for sufficiently large $N.$  

It then follows from the results of Proposition \ref{proposition:prop4} that uniformly
in $z,w \in \mathcal{D}, $ 
\begin{equation}
\label{sasa}
\P \left(|\Upsilon_N(z)-\Upsilon_N(w)|
\geq t \right) \leq 2\*K \* \exp\left( -const_1 \* \frac{t}{|z-w|} \right) + (2\*K+o(1))\* \exp\left(-\frac{\sqrt{\upsilon\*N\*\delta}}{2}\right).
\end{equation}
with some $const_1>0.$
In addition, for any $z \in \mathcal{D}, $
\begin{equation}
\label{sosa}
\P \left(| \frac{d\*(\Upsilon_N)_{ij}(z)}{dz}|
\geq t \right) \leq 2K \* \exp( -const_2 \* t) + (2\*K+o(1))\* \exp\left(-\frac{\sqrt{\upsilon\*N\*\delta}}{2}\right),
\end{equation} 
for some $const_2>0.\ $
Without loss of generality, we can assume that $\mathcal{D}$ is a rectangle with the sides parallel to the coordinate axes.  We then partition
$\mathcal{D}$ into $O(2^{2n}) $ small squares $\sqcup_i D^{(n)}_i \ $ of the diameter 
$2^{-n}, \ 1\leq n \ \leq n_1=const \* \log(N)\*(1+o(1)), \ $ where $const>0 $ 
is chosen so that
\begin{equation}
\label{ucla}
const>\frac{\log 2}{2}.
\end{equation}
We then estimate the probability of the event that 
\begin{equation}
\label{gar1}
s_n:=\sup \|\Upsilon_N(z)-\Upsilon_N(w)\| \geq A\* n^{100}\*2^{-n},
\end{equation}
where the supremum in (\ref{gar1}) 
is taken over all pairs $(z,w)$ that are the vertices of the same small square.   Using (\ref{sasa}), one can show that 
this probability is 
\begin{equation*}
O\left(\exp(-\frac{const_1\*A}{2}\*n^{100})\right) +O\left(\exp(-\frac{\sqrt{\upsilon\*N\*\delta}}{2})\right)
\end{equation*}
uniformly in $N.\ $ 
We can also estimate the probability of the event
\begin{equation}
\label{gar2}
S_{n_1}:=\sup \| \frac{d\*(\Upsilon_N)(z)}{dz} \|  \geq A\* n_1^{100}= A\* const^{100}\* (\log(N))^{100}\*(1+o(1)),
\end{equation}
where the supremum in (\ref{gar2}) is taken over all vertices of the partition $\sqcup_i D^{(n_1)}_i , \ $ by 
\begin{equation*}
O\left(\exp(-\frac{const_2\*A}{2}\*n_1^{100})\right) +O\left(\exp(-\frac{\sqrt{\upsilon\*N\*\delta}}{2})\right),
\end{equation*}
uniformly in $N.$

Finally, we note that since we choose $const>0 $ in  $n_1=const \* \log(N)\*(1+o(1))$ to be sufficiently large so that
(\ref{ucla}) is satisfied, we have
\begin{equation}
\label{gar3}
|S_{n_1}- \sup_{z \in\mathcal{D}}  \| \frac{d\*(\Upsilon_N)(z)}{dz} \|| \leq 1,
\end{equation}
since  the second derivatives of the entries of $\Upsilon_N(z) $ are trivially bounded by \\
$const_3\*\sqrt{N},$
where $const_3$ depends on $\mathcal{D}.$
Now choosing $A$ sufficiently large, we can make the probability 
$ \P\left(\omega_{Y_N}(c\* \alpha \* |\log\alpha|) \geq \alpha \right) \ $
smaller than $\epsilon $ for a suitable constant $c>0. \ $
We leave the details to the reader.
\end{proof}


\section {\bf Fluctuations of Matrix Entries of Regular Functions of Wigner Matrices}
\label{sec:lipschitz}
We give the proofs in the real symmetric case (Theorems \ref{thm:real} and \ref{thm:real1}).  
The proofs in the Hermitian case (Theorems \ref{thm:herm} and \ref{thm:herm1}) are very similar.
First, we assume that $\mu$ and $\mu_1$ satisfy the Poincar\'e inequality (\ref{poin}) and prove
Theorem \ref{thm:real1}.
Then we will extend it to the case of finite fourth moment and prove Theorem \ref{thm:real}.
\begin{proof}[Proof of Theorem  \ref{thm:real1}]
We start by considering a test function $f$ which is analytic in a neighborhood of $[-2\*\sigma, 2\*\sigma]\ $ and takes real values on $\R.\ $
We write
\begin{equation}
\label{integral107}
f(X_N)_{ij}= \frac{1}{2\*\pi\* i} \* \int_{\gamma} f(z) \* (R_N(z))_{ij} \* dz,
\end{equation}
where $\gamma$ is a clockwise-oriented contour in the domain of analyticity of $f$ that encircles the interval 
$[-2\*\sigma, 2\*\sigma].\ $  Then
\begin{equation}
\label{countint}
\sqrt{N}\*\left(f(X_N)_{ij}-\delta_{ij}\* \int_{-2\sigma}^{2\sigma} f(x) \* \frac{1}{2 \pi \sigma^2} \sqrt{ 4 \sigma^2 - x^2}  \* dx \right)
= \frac{1}{2\*\pi\* i} \* \int_{\gamma} f(z)\* (\Upsilon_N(z))_{ij} \* dz, \ 1\leq i \leq j \leq m.
\end{equation}
By the functional convergence in Theorem \ref{thm:resreal1}, the r.h.s. in (\ref{countint}) converges in distribution to the distribution of
independent (up from the diagonal) random variables 
\begin{equation}
\label{countint1}
\frac{1}{2\*\pi\* i} \* \int_{\gamma} f(z)\* \Upsilon_{ij}(z) \* dz= 
\left(\frac{1}{2\*\pi\* i} \* \int_{\gamma} f(z)\*g_{\sigma}^2(z) \* dz\right) \* W_{ij}+
\frac{1}{2\*\pi\* i} \* \int_{\gamma} f(z)\* \*g_{\sigma}^2(z)\*Y_{ij}(z) \* dz, 
\end{equation}
$ 1\leq i \leq j \leq m.$
We evaluate
\begin{align}
\label{wignerterm}
& \frac{1}{2\*\pi\* i} \* \int_{\gamma} f(z)\*g_{\sigma}^2(z) \* dz= \frac{1}{2\*\pi\* i} \* \int_{\gamma} f(z) 
\* \frac{z\*g_{\sigma}(z)-1}{\sigma^2} \* dz= \\
& \frac{1}{\sigma^2}\*\frac{1}{2\*\pi\* i} \* \int_{\gamma} f(z)\*z\*g_{\sigma}(z) \* dz= \frac{1}{\sigma^2} \* 
\int_{-2\sigma}^{2\sigma} x\*f(x) \* \frac{1}{2 \pi \sigma^2} \sqrt{ 4 \sigma^2 - x^2}  \* dx. \nonumber
\end{align}
The last integral in (\ref{countint1}) is a real Gaussian random vector with independent entries and variances
\begin{equation}
\frac{1}{2\*\pi\* i} \* \int_{\gamma} \frac{1}{2\*\pi\* i} \* \int_{\gamma} f(z)\* f(w) \* \left(\E(g_{\sigma}^2(z)\*g_{\sigma}^2(w)
\*Y_{ij}(z)\*Y_{ij}(w))-
\E(g_{\sigma}^2(z)\*Y_{ij}(z))\*
\E(g_{\sigma}^2(w)\*Y_{ij}(w)) \right) \* dx\* dw.
\end{equation}
Let us first consider the off-diagonal case $i\not=j. \ $
By (\ref{dispersii4}-\ref{dispersii6}) and (\ref{padova1}-\ref{padova4}),
we have 
\begin{align}
& \E\left(g_{\sigma}^2(z)\*g_{\sigma}^2(w)\*Y_{ij}(z)\*Y_{ij}(w)\right)- \E\left(g_{\sigma}^2(z)\*Y_{ij}(z)\right)
\*\E\left(g_{\sigma}^2(w)\*Y_{ij}(w)\right)
= \sigma^4\*g_{\sigma}^2(z)\*g_{\sigma}^2(w)\* \varphi(z,w) \nonumber \\
& =-\sigma^4\*g_{\sigma}^2(z)\*g_{\sigma}^2(w) \frac{g_{\sigma}(z)-g_{\sigma}(w)}{z-w}= -\sigma^4\*g_{\sigma}(z)\*g_{\sigma}(w) \*
\frac{g_{\sigma}^2(z)\*g_{\sigma}(w)- g_{\sigma}(z)\*g_{\sigma}^2(w)}{z-w} \nonumber \\
& =-\sigma^2\*g_{\sigma}(z)\*g_{\sigma}(w) \*\frac{z\*g_{\sigma}(z)\*g_{\sigma}(w)-g_{\sigma}(w)-w\*g_{\sigma}(z)\*g_{\sigma}(w) +g_{\sigma}(z)}{z-w}
\nonumber \\
& =-\sigma^2\*g_{\sigma}(z)\*g_{\sigma}(w) \* \left( g_{\sigma}(z)\*g_{\sigma}(w) + \frac{g_{\sigma}(z)-g_{\sigma}(w)}{z-w} \right) \nonumber \\
& = -\sigma^2\*g_{\sigma}^2(z)\*g_{\sigma}^2(w) 
+ \left(\varphi(z,w) - g_{\sigma}(z)\*g_{\sigma}(w)\right). \nonumber
\end{align}
We note that $ \varphi(z,w) - g_{\sigma}(z)\*g_{\sigma}(w)=\Cov(\frac{1}{z-\eta},\frac{1}{w-\eta}), \ $ where $\eta \ $ is  distributed according to the 
semicircle law (\ref{polukrug}).  Then
\begin{align}
& \frac{1}{2\*\pi\* i} \* \int_{\gamma} \frac{1}{2\*\pi\* i} \* \int_{\gamma} f(z)\* f(w) \* (\varphi(z,w) - g_{\sigma}(z)\*g_{\sigma}(w))\*dz\*dw= 
\nonumber \\
& \frac{1}{2\*\pi\* i} \* \int_{\gamma} \frac{1}{2\*\pi\* i} \* \int_{\gamma} f(z)\* f(w) \* \Cov(\frac{1}{z-\eta},\frac{1}{w-\eta})\*dz\*dw=
\V(f(\eta)). \nonumber
\end{align}
This together with (\ref{wignerterm}),(\ref{countint1}) proves (\ref{trudno2}), (\ref{dsqf}) for analytic functions.

In the diagonal case $i=j, $ the previously studied terms contribute to
\begin{equation*}
\E(g_{\sigma}^2(z)\*g_{\sigma}^2(w)\*Y_{ij}(z)\*Y_{ij}(w))- \E(g_{\sigma}(z)\*Y_{ij}(z))\*\E(g_{\sigma}(w)\*Y_{ij}(w))
\end{equation*}
with a factor of two.  In addition, (\ref{dispersii1}-\ref{dispersii3}) provide
one more term \\
$ \ \kappa_4(\mu)\* g_{\sigma}^3(z)\*g_{\sigma}^3(w). \ $
Evaluating
\begin{align}
\label{wignerterm100}
& \frac{1}{2\*\pi\* i} \* \int_{\gamma} f(z)\*g_{\sigma}^3(z) \* dz= \frac{1}{2\*\pi\* i} \* \int_{\gamma} f(z) \* \frac{(z^2-\sigma^2)\*g_{\sigma}(z)-z}
{\sigma^4} \* dz= \\
& \frac{1}{\sigma^4}\*\frac{1}{2\*\pi\* i} \* \int_{\gamma} f(z)\*(z^2-\sigma^2)\*g_{\sigma}(z) \* dz= 
\frac{1}{\sigma^4} \* \int_{-2\sigma}^{2\sigma} (x^2-\sigma^2)\*f(x) \* \frac{1}{2 \pi \sigma^2} \sqrt{ 4 \sigma^2 - x^2}  \* dx, \nonumber
\end{align}
we prove (\ref{trudno1}), (\ref{trudno2}) in the analytic case with the centralizing
constant $\int_{-2\sigma}^{2\sigma} f(x) \* d \mu_{sc}(dx).$

It follows from (\ref{bol}) that if $f$ satisfies the subexponential growth condition (\ref{exprost}) on the real line then
we can choose the centralizing constants to be $\E f(X_N)_{ij} $ in (\ref{trudno1}), (\ref{trudno2}).
To extend the results of Theorem \ref{thm:real} to a more general class of functions we apply a standard approximation procedure.
If $f$ is Lipschitz continuous on $[-2\*\sigma-\delta, 2\*\sigma +\delta]$  and satisfies (\ref{exprost}), we choose a sequence of analytic functions
$\{f_n\}, \ n\geq 1, $ such that
\begin{equation*}
f_n(0)=f(0), \ n\geq 1, \ \text{and} \ |f_n-f|_{\mathcal{L},\delta} \to 0 \ \text{as } \ n\to \infty,
\end{equation*}
where $|f|_{\mathcal{L},\delta}$ has been defined in (\ref{snova1}).
Let us also choose $h$ in such a way that
$h:\R \to \R $ is a smooth function with compact support, $ h(x)=1 $ for $x \in [-2\*\sigma-\delta/2, 2\*\sigma +\delta/2], $ 
and $ h(x)=0 $ for $|x| \geq 2\*\sigma +\frac{3}{4}\*\delta. $ 
We observe that for any  $n\geq 1 $
\begin{equation*}
f(X_N)\not= (f\*h)(X_N), \ f_n(X_N)\not= (f_n\*h)(X_N), 
\end{equation*}
with probability exponentially small in $\sqrt{N}$.  In addition,
\begin{align}
& \E |(f\*(1-h))(X_N)_{ij}|=O(\exp(-const\*\sqrt{N})), \nonumber \\
& \E |(f_n\*(1-h))(X_N)_{ij}|=O(\exp(-const_n\*\sqrt{N})), \ n\geq 1, \nonumber
\end{align}
where $const>0, \ const_n>0. \ $ We then choose $n$ sufficiently large so that \\
$|f\*h-f_n\*h|_{\mathcal{L}} \leq \varepsilon. \ $ As in the proof of Proposition \ref{proposition:prop4}, we use the fact that for any 
Lipschitz continuous $f$, 
the function $f(X)_{ij} $ is a Lipschitz continuous function of the matrix entries of $X.$  Therefore, we can show
that  $\ \V\left(\sqrt{N}\*((f\*h)(X_N)_{ij}-(f_n\*h)(X_N)_{ij})\right) \ $ can be made arbitrary small (uniformly in $N$ ) for sufficiently large $n$ if
we apply the concentration inequality
(\ref{BOLUKLON}) to $(f\*h)(X_N)_{ij}-(f_n\*h)(X_N)_{ij}. $ Finally, we observe that
$\omega^2(f_n)\to \omega^2(f), \ \alpha^2(f_n) \to \alpha^2(f), \ \beta^2(f_n)\to \beta^2(f), \ d^2(f_n)\to d^2(f) $ as $n \to \infty.$

It follows from Proposition \ref{proposition:prop2} and (\ref{bol})
that if $f$ is seven times continuously differentiable on $[-2\*\sigma-\delta, 2\*\sigma +\delta]$  
(six times continuously differentiable on $[-2\*\sigma-\delta, 2\*\sigma +\delta]$  in the off-diagonal case $i\not=j $) and
satisfies (\ref{exprost}) then one can replace $\E f(X_N)_{ij} $ by $\delta_{ij}\* \int f(x) \* d \mu_{sc}(dx) $ in (\ref{trudno1}), (\ref{trudno2}) since
$$ \ \E f(X_N)_{ij}= \delta_{ij}\* \int f(x) \* d \mu_{sc}(dx) + O(\frac{1}{N}). $$
Theorem \ref{thm:real1} is proven.
\end{proof}

Now, we prove Theorem \ref{thm:real} assuming only that $\mu$ and $\mu_1$ have finite fourth moments.  The role of (\ref{BOLUKLON})
will be played by (\ref{chto3}). 
\begin{proof}[Proof of Theorem  \ref{thm:real}]
By Theorem \ref{thm:resreal} and Proposition \ref{proposition:prop1}, we have the result for finite linear combinations
\begin{equation}
\label{120}
f(x)=\sum_{l=1}^k a_l\* \frac{1}{z_l-x}, \ z_l \not\in [-2\sigma, 2\sigma], \ 1\leq l \leq k, 
\end{equation}
and, more generally, for 
\begin{equation}
\label{1200}
f(x)=\sum_{l=1}^k a_l\* h_l(x)\*\frac{1}{z_l-x}, \ z_l \not\in [-2\sigma, 2\sigma], \ 1\leq l \leq k, 
\end{equation}
where $h_l\in C^{\infty}_c(\R), \ 1\leq l\leq k,$ satisfy (\ref{hhh}).
Applying the Stone-Weierstrass theorem (see e.g \cite{RS}), one can show that such functions are dense in $C^4_c(\R).$  
Therefore, we can approximate an arbitrary $f\in C^4_c(\R)$ by 
such functions $h(x)\*f_n(x)$ in such a way that  
\begin{equation*}
supp (f)\subset [-A, A], \ supp(hf_n) \subset [-A, A], 
\end{equation*}
for all $n$ and sufficiently large $A>0,$
and $\|f-hf_n\|_{C^4([-A,A])} \to 0, \ $ as $n \to \infty.$
It follows from (\ref{chto3}) that $ \V(f(X_N)_{ij}- (h(x)\*f_n)(X_N)_{ij})  $ can be made arbitrary small uniformly in $n$ provided we choose $n$ to be 
sufficiently large.  Since $\omega^2(hf_n)\to \omega^2(f), \ \alpha^2(hf_n) \to \alpha^2(f), \ \beta^2(hf_n)\to \beta^2(f), \ d^2(hf_n)\to d^2(f) $ as 
$n \to \infty,$  the result follows.  
Theorem \ref{thm:real} is proven.
\end{proof}


\section{\bf{Appendix}}
\label{section:appendix}

In our analysis, we need to study the expectation of the random matrix entries multiplied by functions of the random matrix. In order to handle this we 
use the following decoupling formula \cite{KKP}:
 
Given $\xi$, a real-valued random variable with $p+2$ finite moments, and $\phi$ a function from $\C \to \R$ with $p+1$ continuous and bounded 
derivatives then: 
\begin{equation}
\label{decouple} 
 \E(\xi \phi(\xi)) = \sum_{a=0}^p \frac{\kappa_{a+1}}{a!} \E(\phi^{(a)}(\xi)) + \epsilon  
 \end{equation}
Where $\kappa_a$ are the cumulants of $\xi$, $|\epsilon| \leq C \sup_t \big| \phi^{(p+1)}(t) \big| \E(|\xi|^{p+2})$, $C$ depends only on $p$. \\

For any two Hermitian matrices $X_1$ and $X_2$ and non-real $z$ we have the resolvent identity:
\begin{equation}
\label{resident}
 (zI - X_2)^{-1} = (zI - X_1)^{-1} - (zI - X_1)^{-1}(X_1 - X_2) (zI - X_2)^{-1}  
\end{equation}

If $X$ is a real symmetric matrix with resolvent $R$ then the derivative of $R_{kl}$ with respect to $X_{pq}$, for $p \not = q$ is given by 
\begin{equation}
\label{vecher1}
\frac{\partial R_{kl}}{\partial X_{pq}} = R_{kp}\*R_{ql} +R_{kq} \*R_{pl}.
\end{equation}
If $p =q$ then the derivative is:
\begin{equation}
\label{vecher2}
\frac{\partial R_{kl}}{\partial X_{pp}} = R_{kp}\*R_{pl} .
\end{equation}

In a similar way, if $X$ is a Hermitian matrix then the derivative of $R_{kl}$ with respect to $\Re X_{pq}, \ \Im X_{pq}$, for $p \not = q$ are given by
\begin{align}
& \frac{\partial R_{kl}}{\partial \Re X_{pq}} = R_{kp}\*R_{ql} +R_{kq} \*R_{pl}, \\
& \frac{\partial R_{kl}}{\partial \Im X_{pq}} = i\*\left(R_{kp}\*R_{ql} - R_{kq}\*R_{pl} \right).
\end{align}
When $p=q$ then
\begin{equation}
\frac{\partial R_{kl}}{\partial X_{pp}} = R_{kp}\*R_{pl} .
\end{equation}

We will use the following bounds on the resolvent:
\begin{equation}
\label{norma17}
\| R_N(z) \|=\frac{1}{dist(z, Sp(X))},
\end{equation}
where by $Sp(X) $ we denote the spectrum of a real symmetric (Hermitian) matrix $X.$
(\ref{norma17}) implies
\begin{equation}
\label{resbound}
\| R_N(z) \| \leq |\Im(z)|^{-1} 
\end{equation}
which also implies all the entries of the resolvent are bounded by $|\Im(z)|^{-1}$.
Similarly, we have the following bound for the Stieltjes transform, $g(z)$, of any probability measure:
\begin{equation}
\label{STbound}
 | g(z) | \leq |\Im(z)|^{-1} 
\end{equation}

Below, we state the Central Limit Theorem for random sesquilinear forms of Bai and Yao in the form given in \cite{BGM}. 

\begin{theorem}
\label{thm:bggm}
Let us fix $m\geq 1$ and let, for each $N, \ \mathcal{M}_N^{(s,t)}, \ 1\leq s,t \leq m, $ be a family of 
$N\times N$ real (resp. complex) matrices such that for all
$s,t, \ \mathcal{M}_N^{(t,s)}= \left(\mathcal{M}_N^{(s,t)}\right)^* $ and such that for all $s,t=1,\ldots, m,$
\begin{align}
\label{nupogodi1}
& \tr_N \left(\mathcal{M}_N^{(s,t)} \* \mathcal{M}_N^{(t,s)}\right)\to \sigma^2_{s,t}, \ \ \text{as} \ n\to \infty, \\
\label{nupogodi100}
& \frac{1}{N}\* \sum_{i=1}^N |(\mathcal{M}_N^{(s,s)})_{ii}|^2 \to \gamma_s, \ \ \text{as} \ n\to \infty.
\end{align}
Let $u^{(1)}, \ldots, u^{(m)}$ be a sequence of i.i.d. random vectors in $\R^N$ (resp. $\C^N$) such that the $N$ coordinates of $u^{(1)}$ 
are i.i.d. centered
real (resp. complex) centered random variables distributed according to a probability measure with variance one and finite fourth moment.  
In the complex case, we also assume that real and imaginary 
parts of each coordinate of $u^{(1)}$ are independent and identically distributed according to a probability measure $\nu $ on the real line.

For each $N,$ define the $m\times m$ random matrix
\begin{equation}
\label{avstral}
G_N:=\left( \sqrt{N}\* \left(\langle u^{(p)}, \mathcal{M}_N^{(p,q)}\*u^{(q)} \rangle - \delta_{pq} \* 
\tr_N(\mathcal{M}_N^{(p,p)}) \right) \right), \ \ 1\leq p,q \leq m.
\end{equation}
Then the distribution of $G_N$ converges weakly to the distribution of a real symmetric (resp. Hermitian) random matrix 
$G=(g_{p,q}), \ 1\leq p,q \leq m, $
such that the random variables
$$ \{g_{p,q}, \ 1\leq p,q \leq m \},  \ \text{(resp.} \{g_{s,s}, \ 1\leq s \leq m, \ \Re g_{p,q}, \ \Im g_{p,q}, \ 1\leq p,q \leq m \} )$$
are independent for all $s, \ g_{ss}\sim N(0, 2\*\sigma^2_{s,s}+ \kappa_4(\nu)\*\gamma_s) \ (\text{resp.} $ \\
$ g_{ss}\sim N(0, \sigma^2_{s,s}+ \frac{1}{2}\*\kappa_4(\nu)\*\gamma_s)), \ $ and for all $p\not=q, \ g_{s,t}\sim N(0, \sigma^2_{p,q}), $\\
(resp.) $ \ \Re g_{s,t}\sim N(0, \frac{1}{2}\*\sigma^2_{p,q}),\ \Im g_{s,t}\sim N(0, \frac{1}{2}\*\sigma^2_{p,q}),$
where $\kappa_4(\nu)$ denotes the fourth cumulant of $\nu.$
\end{theorem}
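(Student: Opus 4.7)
The plan is to establish the joint convergence via the Cram\'er--Wold device combined with a martingale Central Limit Theorem, in the same spirit as the one--dimensional argument sketched after (\ref{macedonia}) in the Introduction. To identify the limit of $G_N$ it suffices to show that for any real constants $\{\alpha_{p,q}\}_{p\le q}$ and $\{\beta_{p,q}\}_{p<q}$ the real linear combination
\[
S_N \;:=\; \sum_{p\le q} \alpha_{p,q}\,\Re (G_N)_{pq} \;+\; \sum_{p<q} \beta_{p,q}\,\Im (G_N)_{pq}
\]
converges in distribution to a centered Gaussian with variance equal to the same linear combination applied to the covariances predicted by the theorem. In particular, joint Gaussianity and the claimed independence structure will follow from verifying that the limiting variance is exactly bilinear in the $\alpha_{p,q},\beta_{p,q}$ with the predicted coefficients.

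The next step is to stack $u^{(1)},\dots,u^{(m)}$ into a single vector $U\in\R^{mN}$ (resp.\ $\C^{mN}$) with i.i.d.\ coordinates, and to build a Hermitian $mN\times mN$ block matrix $\mathcal{A}_N$ whose $(p,q)$ block is a real linear combination of $\mathcal{M}_N^{(p,q)}$ and $(\mathcal{M}_N^{(q,p)})^*$ determined by the chosen $\alpha_{p,q},\beta_{p,q}$. Then, up to deterministic centerings, $S_N$ equals
\[
\sqrt{N}\,\bigl(\langle U, \mathcal{A}_N U\rangle - \E\langle U, \mathcal{A}_N U\rangle\bigr).
\]
Enumerating the coordinates of $U$ as $U_1,\dots,U_{mN}$ and setting $\mathcal{F}_j = \sigma(U_1,\dots,U_j)$, I decompose $S_N = \sum_{j=1}^{mN} Z_{N,j}$ into martingale differences exactly as in the scalar case described after (\ref{macedonia}): $Z_{N,j}$ collects the pure-diagonal term $(\mathcal{A}_N)_{jj}(|U_j|^2-\E|U_j|^2)$ and the off--diagonal cross terms $2\,\Re\!\bigl(\overline{U_j}\sum_{i<j}(\mathcal{A}_N)_{ji}U_i\bigr)$.

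To apply the martingale CLT I then verify that $\sum_{j=1}^{mN}\E[|Z_{N,j}|^2\mid \mathcal{F}_{j-1}]$ converges in probability to the predicted limiting variance, together with a Lindeberg condition on the $Z_{N,j}$. The off--diagonal cross terms contribute $\frac{2}{N}\Tr(\mathcal{A}_N^{2})$ (modulo lower order), which by block expansion is precisely the linear combination of the traces $\tr_N(\mathcal{M}_N^{(p,q)}\mathcal{M}_N^{(q,p)})$ that converges to the $\sigma^2_{p,q}$ by (\ref{nupogodi1}). The pure-diagonal terms contribute an additional $\kappa_4(\nu)\cdot\frac{1}{N}\sum_j(\mathcal{A}_N)_{jj}^2$, which by (\ref{nupogodi100}) converges to a linear combination of the $\gamma_s$ and only involves the diagonal blocks $p=q$, producing the $\kappa_4(\nu)\,\gamma_s$ correction in the variance of $g_{ss}$. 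The Lindeberg condition is a routine consequence of the finite fourth moment of $\nu$ together with the uniform bound on $\|\mathcal{A}_N\|$ implicit in (\ref{nupogodi1}).

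The main obstacle, and the part that truly delivers the theorem, is the bookkeeping of the variance computation: one must track carefully which summands in $\sum_j \E[|Z_{N,j}|^2\mid\mathcal{F}_{j-1}]$ come from pure quadratic contributions in a single block $p=q$ (these and only these carry the $\kappa_4(\nu)$ factor) versus genuinely bilinear contributions across two different independent vectors $u^{(p)},u^{(q)}$ with $p\ne q$ (which only see the second moment of $\nu$, hence produce no $\kappa_4$ term). Checking that the cross--variances between different pairs $(p,q)\ne(p',q')$ vanish in the limit — which is what gives independence of the off--diagonal entries in the Gaussian limit — reduces, after the same block expansion, to showing that traces of products of $\mathcal{M}_N$ blocks with mismatched index pairs are of order $o(N)$, which follows from the independence of the $u^{(p)}$ and orthogonality of the monomials in the coordinates of $U$ that appear in the quadratic forms for different $(p,q)$. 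In the complex Hermitian case the same argument, applied separately to the real and imaginary parts of $\langle u^{(p)},\mathcal{M}_N^{(p,q)}u^{(q)}\rangle$ and using the assumed independence of $\Re u^{(1)}_i$ and $\Im u^{(1)}_i$, yields the extra factor of $1/2$ in the variances and the vanishing of $\Cov(\Re g_{pq},\Im g_{pq})$.
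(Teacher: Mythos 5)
The paper does not prove Theorem \ref{thm:bggm}: it reproduces it verbatim from Bai--Yao \cite{BY}, in the form given as Theorem 6.4 of \cite{BGM}, ``for the convenience of the reader,'' so there is no in-paper proof to compare against. That said, your outline --- Cram\'er--Wold reduction, stacking the $u^{(p)}$ into a single $mN$-vector, passing to a block-Hermitian $\mathcal{A}_N$, martingale-difference decomposition with respect to the coordinate filtration, and a martingale CLT with a conditional-variance and Lindeberg check --- is indeed the route that Bai and Yao follow and is essentially the strategy that the present paper itself sketches for the scalar quadratic form after (\ref{macedonia}).

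There is, however, a genuine gap in the Lindeberg step. You assert that the Lindeberg condition follows from ``the uniform bound on $\|\mathcal{A}_N\|$ implicit in (\ref{nupogodi1}).'' This is false: condition (\ref{nupogodi1}) controls only the \emph{average} squared singular value $\tr_N(\mathcal{M}_N^{(s,t)}\mathcal{M}_N^{(t,s)})$ and says nothing about the largest one. Indeed, take $m=1$ and $\mathcal{M}_N=\sqrt{N}\,e_1e_1^{T}$: both (\ref{nupogodi1}) and (\ref{nupogodi100}) hold with $\sigma^2=\gamma=1$, yet the normalized and centered quadratic form collapses to $u_1^2-1$, which is not Gaussian unless $\nu$ is. A uniform operator-norm bound --- as in condition (\ref{nonrandquad1}) of the Introduction, which is exactly what is used in the paper's application since the blocks are resolvents with $\|\tilde R(z)\|\le|\Im z|^{-1}$ --- must be assumed as an additional hypothesis; it is what makes $\max_j |(\mathcal{A}_N)_{jj}|$ and the partial row sums $\sum_{i<j}|(\mathcal{A}_N)_{ji}|^2$ negligible on the $\sqrt{N}$-scale, which in turn is what makes the conditional second moments concentrate and the Lindeberg sum vanish. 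Your proof should state this bound explicitly and use it rather than try to extract it from (\ref{nupogodi1}). A secondary, more cosmetic point: in the Cram\'er--Wold computation the cross-covariances between different pairs $(p,q)\ne(p',q')$ vanish exactly, not merely as $o(N)$, because the i.i.d.\ coordinates produce Kronecker deltas in the second-moment identities, so $\Tr(\mathcal{A}_N^{*}\mathcal{A}_N)$ and $\sum_j|(\mathcal{A}_N)_{jj}|^2$ decompose block-by-block automatically; no separate estimate of ``traces with mismatched pairs'' is required.
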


\begin{remark}
Almost simultaneously with our paper, L.Pastur and A. Lytova posted a preprint \cite{PL10} where they extended the technique of \cite{LP} and gave 
another proof the convergence in distribution for a normalized diagonal entry $\sqrt{N}\*(f(X_N)_{11}-\E (f(X_N)_{11})$ under the conditions that a 
real symmetric Wigner matrix $X_N$ has i.i.d. entries up from the diagonal and the cumulant generating function $\log (\E e^{z\*W_{12}})$ is entire. 
Pastur and Lytova require that a test function $f$ satisfies
$$ \int_{\R} (1+|k|)^3 \* |\hat{f}(k)|\* dk <\infty,$$
where $\hat{f}(k)$ is the Fourier transform of $f.$
\end{remark}


\begin{thebibliography}{99}

\bibitem{AGZ} Anderson G.W., Guionnet A., and Zeitouni O. {\em An Introduction to Random Matrices}, Cambridge Studies in Advanced Mathematics 118, 
Cambridge University Press, New York, 2010.

\bibitem{AZ}  Anderson G.W. and  Zeitouni O., {\em A CLT for a band matrix model}, Probab. Theory Relat. Fields 134, 283-338 (2006).

\bibitem{B} Bai, Z. D. {\em Methodologies in spectral analysis of large-dimensional random 
matrices, a review}. Statist. Sinica 9, 611-677 (1999).

\bibitem{BWZ} Bai, Z.D., Wang X., and Zhou W., {\em CLT for linear spectral statistics of Wigner matrices}, Elec. J. Probab., 14, 2391-2417 (2009).


\bibitem{BY} Bai, Z.D. and Yao, J.,  {\em Central limit theorems for eigenvalues in a spiked population model}, Ann. I.H.P.-Prob.et Stat. 44, 447-474
(2008).

\bibitem{BYin} Bai, Z.D. and Yin Y.Q., {\em Necessary and sufficient conditions for the almost sure convergence of the largest eigenvalue of Wigner 
matrices}, Ann. Probab. 16, 1729-1741, (1988).



\bibitem{BG} Ben Arous G. and Guionnet A., {\em Wigner matrices}, in Oxford Handbook on Random Matrix Theory, edited by 
Akemann G., Baik J. and Di Francesco P., Oxford University Press, New York, 2011.


\bibitem{BGM} Benaych-Georges F., Guionnet A., and Maida M., {\em Fluctuations of the extreme eigenvalues of finite rank deformations of random matrices},
available at arXiv:1009.0145.



\bibitem{Ber} Beran, R.J.,  {\em Rank spectral processes and tests for serial dependence. Ann. Math. Statist.} 43, 1749-   1766 (1972).


\bibitem{Bil} Billingsley P., {\em Convergence of Probability Measures}, Willey Series in Probability and Statistics, New York, 1999.




\bibitem{CD} Capitaine M. and Donati-Martin C., {\em Strong asymptotic freeness of Wigner and Wishart matrices}, Indiana Univ. Math. J. 56, 767-804 (2007)


\bibitem{CDF} Capitaine M., Donati-Martin C., and F\'eral D., {\em The largest eigenvalue of finite rank deformation of large Wigner matrices: 
convergence and non 
universality of the fluctuations}, Ann. 
Probab., 37, (1), 1-47 (2009). 

\bibitem{CDF1} Capitaine M., Donati-Martin C., and F\'eral D., {\em Central limit theorems for eigenvalues of deformations of Wigner matrices},
available at arXiv:0903.4740.



\bibitem{CQT} Chen X., Qi H., and Tseng P., {\em Analysis of nonsmooth symmetric-matrix-valued functions with applications to semidefinite
complementary problems}, SIAM J. Optim., 13, 960-985 (2003).

\bibitem{D} Davies, E.B., {\em The functional calculus}, J. London Math. Soc., 52, 166-176 (1995).

\bibitem{Dur} Durrett R., {\em Probability.  Theory and Examples}, 2nd ed., Wadswoth Publishing, New York, 1995.

\bibitem{E} Erd\"os L., {\em  Universality of Wigner random matrices: a survey of recent results}, 
available at arXiv:1004.0861.


\bibitem{EYY} Erd\"os L., Yin J. and Yau H-T., {\em Rigidity of eigenvalues of generalized Wigner 
matrices}, available at arXiv:1007.4652v3.



\bibitem{GZ} Guionnet A. and Zegarlinski B., {\em Lectures on logarithmic Sobolev inequalities}, Seminaire de Probabilit\'{e}s XXXVI, 
Lecture Notes in Mathematics 1801, Paris, Springer 2003.

\bibitem{HT} Haagerup U. and Thorbjornsen S., {\em A new application of random matrices: Ext($C_{red}^*  (F_2 )$) is not a group}, Ann. Math. 162, 
711-775 (2005).

\bibitem{HS} Helffer B. and Sj\"{o}strand J., {\em Equation de Schr\"{o}dinger avec champ magnetique et equation de Harper}, Schr\"{o}dinger
Operators, Lecture Notes in Physics 345, 118-197, (eds. H. Holden and A. Jensen)  Springer, Berlin 1989. 

\bibitem{H1} H\"{o}rmander L., {\em On the singularities of solutions of partial differential equations}, Proceedings of the International Conference,
Tokyo 1969, 31-40, University of Tokyo Press, 1970.

\bibitem{H2} H\"{o}rmander L., {\em The analysis of linear partial differential operators}, vol. 1, Springer, New York, 2003.

\bibitem{J} Johansson, K., {\em Universality for certain Hermitian Wigner matrices under weak moment conditions}, 
available at arXiv:0910.4467v3.

\bibitem{J1} Johansson, K. {\em On fluctuations of eigenvalues of random Hermitian matrices}, Duke J. Math. 91, 151-204 (1998).



\bibitem{KKP} Khorunzhy A., Khoruzhenko B. and Pastur L., {\em Asymptotic properties of large random matrices 
with independent entries}, J. Math. Phys. 37, 5033-5060 (1996). 


\bibitem{LP1} Lytova A. and Pastur L., {\em Central Limit Theorem for linear eigenvalue statistics of random matrices with independent entries},
Ann. Probab., 37, 1778-1840 (2009).


\bibitem{LP} Lytova A. and Pastur L., {\em Fluctuations of Matrix Elements of Regular Functions of Gaussian Random Matrices}, J. Stat. Phys.,
134, 147-159 (2009).

\bibitem{M} Mehta, M.I. {\em Random Matrices}. New York, Academic Press, 1991.

\bibitem{ORS} S. O'Rourke, Renfrew D., and Soshnikov A., {\em On fluctuations of matrix entries of regular functions of Wigner matrices with
non-identically distributed entries}, 
to appear in J. Theor. Probab., 
available at arXiv:1104.1663 v.4.


\bibitem{PS1} P\'ech\'e, S. and Soshnikov A., {\em Wigner random matrices with non-symmetrically distributed entries},
J. Stat. Phys., 129, 857-884 (2007).

\bibitem{PS2} P\'ech\'e, S. and Soshnikov A., {\em On the lower bound of the spectral norm of symmetric random matrices with independent entries},
Electron.  Commun. Probab., 13, 280-290, (2008).

\bibitem{PRS} Pizzo, A., Renfrew D., and Soshnikov A., {\em On finite rank deformations of Wigner matrices}, 
to appear in Annales de l'Institut Henri Poincar\'e (B) Probabilit\'es et Statistiques, available at arXiv:1103.3731 v.4.

\bibitem{PL10} Pastur L and Lytova A., {\em Non-Gaussian limiting laws for entries of regular functions of the Wigner matrices'},
available at arXiv:1103.2345.


\bibitem{RS} Reed, M. and Simon B., {\em Methods of Modern Mathematical Physics, Vol. 1: Functional Analysis}, 2nd ed., New York, Academic Press, 1980. 

\bibitem{Sev}  Sevast'yanov, B.A.,  {A class of limit distributions for quadratic forms of normal stochastic variables}, 
Theor. Probab. Appl. 6, 337-340 (1961).


\bibitem{Sh} Shcherbina M., {\em Central limit theorem for linear eigenvalue statistics of Wigner and sample covariance random matrices}, available at
arXiv:1101.3249v1.

\bibitem{Sh1} Shcherbina M., letter from March 1, 2011.



\bibitem{S} Soshnikov A., {\em Universality at the edge of the spectrum in Wigner random matrices} , Commun.
Math. Phys. 207, 697-733 (1999). 

\bibitem{TV}  Tao T. and Vu V., {\em  Random matrices: universality of local eigenvalue statistics up to the edge,}
Commun. Math. Phys. 298, no. 2 (2010).

\bibitem{TW1} Tracy C., and Widom. H, {\em Level spacing distributions and the Airy kernel}, Commun. Mathematical Physics 159, 151-174 (1994).

\bibitem{TW2} Tracy C., and Widom. H, {\em On orthogonal and symplectic matrix ensembles}, Commun. Mathematical Physics 177, 727-754 (1996).

\bibitem{W} Whittle P.,  {\em On the convergence to normality of quadratic forms in independent variables}, Theor.
Probab. Appl. 9, 113-118 (1964).


\end{thebibliography}
\end{document}